\renewcommand{\labelenumi}{(\roman{enumi})}
\theoremstyle{definition}
\newtheorem{theorem}{Theorem}[section]
\newtheorem{prop}[theorem]{Proposition}
\newtheorem{lemma}[theorem]{Lemma}
\newtheorem{cor}[theorem]{Corollary}
\newtheorem{rem}[theorem]{Remark}
\newtheorem{definition}[theorem]{Definition}
\newtheorem{example}[theorem]{Example}
\newtheorem{setting}[theorem]{Setting}
\newtheorem{notation}[theorem]{Notation}
\newcounter{res}
\newtheorem{result}[res]{Main Result}   				
\newcommand{\Leb}{\mathrm{Leb}}				
\DeclareMathOperator*{\rs}{\widehat{\mathbb{C}}}
\DeclareMathOperator*{\nn}{\mathbb{N}}
\DeclareMathOperator*{\poly}{\mathrm{Poly}}
\DeclareMathOperator{\supp}{supp}
\DeclareMathOperator{\diam}{diam}
\DeclareMathOperator*{\rat+}{\mathrm{Rat}_{+}}
\DeclareMathOperator*{\Poly}{\mathrm{Poly}}
\DeclareMathOperator*{\Cpt}{\mathrm{Cpt}}
\DeclareMathOperator*{\CC}{\mathbb{C}}
\DeclareMathOperator*{\hypd}{\mathit{d}_\mathrm{{hyp}}}
\newcommand{\MRDS}{\textrm{MRDS}}
\newcommand{\chaotic}{\textrm{chaotic}}
\newcommand{\condition}{\textrm{non-degenerate}}
\title{Non-i.i.d. random holomorphic dynamical systems\\ and the generic dichotomy
\footnote{MSC: 37F10, 37H10. 
To appear in Nonlinearity. 
}}
\date{January 24, 2022}
\author{Hiroki Sumi\footnote{Course of Mathematical Science, Department of Human Coexistence,
 Graduate School of Human and Environmental Studies, Kyoto University.  
  Yoshida Nihonmatsu-cho, Sakyo-ku, Kyoto, 606-8501, Japan}\\
E-mail: sumi@math.h.kyoto-u.ac.jp\\
\href{http://www.math.h.kyoto-u.ac.jp/~sumi/index.html}{http://www.math.h.kyoto-u.ac.jp/$\sim$sumi/index.html}\\  \\
Takayuki Watanabe\footnote{Author to whom any correspondence should be addressed. Department of Mathematics, Faculty of Science, Kyoto University. Kitashirakawa Oiwake-cho, Sakyo-ku, Kyoto 606-8502, Japan}\\
E-mail: watanabe.takayuki.3m@kyoto-u.ac.jp}
\begin{document}

\maketitle
\begin{abstract}
We consider non-i.i.d.\ random holomorphic dynamical systems whose choice of maps depends on Markovian rules. 
We show that generically, such a system is mean stable or chaotic with full Julia set. 
If a system is mean stable, then the Lyapunov exponent is uniformly negative for every initial value and almost every random orbit.  
Moreover, we consider  families of random holomorphic dynamical systems 
and show that  the set of mean stable systems has full measure under certain conditions.  
The latter is a new result even for i.i.d.\ random dynamical systems. 
\end{abstract}

\section{Introduction}
\subsection{Background}
We consider random dynamical systems (RDSs) of rational maps on the Riemann sphere $\rs.$  
The study of RDS is rapidly growing. 
The previous works find many new phenomena which cannot happen in deterministic dynamics, which are called noise-induced phenomena or randomness-induced phenomena. 
For example, chaotic dynamics can be more chaotic if one adds noise, and 
more surprisingly, chaotic dynamics can be more stable because of noise. 
The latter phenomena are called noise-induced order. 
For details on randomness-induced phenomena, the reader is referred to the authors' previous paper \cite{sw19} and references therein, say \cite{bb03, fs91, js17, msu, sumi13,  sumi21, uz16}.

However, most of the previous studies concerned i.i.d.\ random dynamical systems. 
It is very natural to generalize the settings and consider non-i.i.d.\ random dynamical systems. 
In this paper, we especially treat random dynamical systems with “Markovian rules” whose randomness depends on the past.

Our studies may be applied to the skew products whose base dynamical systems have Markov partitions. 
We believe that this research will contribute not only toward mathematics but also toward applications to the real world. 
One motivation for studying dynamical systems is to analyze mathematical models used in the natural or social sciences. 
Since the environment changes randomly, it is natural to investigate random dynamical systems which describe the time evolution of systems with probabilistic terms. 
In this sense, it is very important to understand “Markovian” noise since there are a lot of systems whose noise depends on the past.

The authors found a noise-induced phenomenon which can happen in Markov RDSs but cannot happen in i.i.d.\ RDSs, see \cite[Main Result 6]{sw19}. 
This exhibits the difference between non-i.i.d.\ and i.i.d.\ RDSs and motivates us to study non-i.i.d.\ RDSs.

In this paper, we show some results regarding noise-induced order which greatly deepen the results in  \cite{sw19}. 
RDS with Markovian noise is the theme of this paper. 
In \cite{sw19},  the authors introduced Markov random dynamical systems quite generally but 
in this paper, we are concerned with rational maps and define such systems as follows.   
The properties of holomorphic functions allow us to control minimal sets and to study global (random) dynamics. 
For example, we use Montel's Theorem and hyperbolic metric  to show our results. 
Let $\rat+$ be the space of all rational maps of degree two or more from $\rs$ to itself endowed with the metric $\kappa ( f , g) := \sup_{z \in \rs} d (f(z),  g(z)  ),$ 
where   $d$ denotes the spherical distance on $\rs$.

\begin{definition}\label{def:tauIntro}
Let $m \in \nn$.
Suppose that $m^2$ regular Borel measures $ (\tau_{ij})_{i, j =1, \dots, m}$ on $\rat+$ satisfy  $ \sum_{ j =1}^m \tau_{i j}(\rat+) = 1$ for all $i =1, \dots, m$.
We call $\tau =  (\tau_{ij})_{i, j =1, \dots, m}$ a {\it Markov random dynamical system} (MRDS for short).
We say that $\tau$ is {\it compactly generated} if  $\supp \tau_{ij}$ is compact for each $i, j =1, \dots, m$. 
\end{definition}

For a given MRDS $\tau = (\tau_{ij})_{i, j =1, \dots, m}$, we consider the Markov chain on $\rs \times \{1, \dots, m\}$ whose transition probability from $(z, i ) \in \rs \times \{1, \dots, m\}$ to $ B \times \{ j \}$ is  defined by 
$$ \tau_{ij}(\{ f \in \rat+ ; f(z) \in B\}) ,$$
 where $B$ is a Borel subset of $\rs$  and $ j \in  \{1, \dots, m\}.$ 
 This (time-homogeneous) transition function defines the one-point motion on $\rs \times \{1, \dots, m\}$. 
 We can  construct skew-product maps also as a representation of Markov RDSs, 
 see Definition 2.29 of \cite{sw19} and 2.1.6 Theorem (RDS Corresponding to Markov Chain) of Arnold's book \cite{Arn}. 
 For general relation between Markov chain and random mappings, see pp.\ 53--55 of  \cite{Arn}.

The Markov chain induced by $\tau = (\tau_{ij})_{i, j =1, \dots, m}$ describes the following random dynamical system on the phase space $\rs.$
Fix an initial point $z_{0} \in \rs$ and choose a vertex $i = 1, \dots, m$ (with some probability if we like). 
We choose a vertex $i_{1} = 1, \dots, m$ with probability $\tau_{i i_{1} }(\rat+) > 0$ and choose a map $f_{1}$ according to the probability distribution $ \tau_{i i_{1}} / \tau_{i i_{1} }(\rat+).$ 
Repeating this, we randomly choose a vertex $i_{n}$ and a map $f_{n}$ for each $n$-th step. 
In this paper, we investigate the behavior of random orbits of the form $f_{n} \circ \dots \circ f_{2} \circ f_{1} (z_{0}).$ 
For the general theory of RDSs, see Arnold's book \cite{Arn}.

By extending the phase space from $\rs$ to $\rs \times \{ 1, \dots, m \}$, we can represent MRDSs as naive Markov chains. 
This simple representation enables us  to analyze MRDSs intuitively. 

Note that our definition is a generalization of i.i.d.\ RDS and deterministic dynamics. 
If $m=1$ then our definition coincides with i.i.d.\ RDS on $\rs$ induced by $\tau = \tau_{11}$. 
Besides, if $\tau_{11}$ is the Dirac measure $\delta_{f}$ at $f \in  \rat+$, then our definition treats  dynamics of iteration of $f$ essentially. 

\subsection{Definitions}
For an MRDS $\tau$, we define the following set-valued dynamics. 
We present our results in the next subsection \ref{ssec:MainRes}. 

\begin{definition}\label{def:graph}
Let $\tau =  (\tau_{ij})_{i, j =1, \dots, m}$ be an MRDS. 
We consider the directed graph $(V, E)$ in  the following way. 
We define the vertex set as $V := \{ 1,2, \dots , m\} $ and the edge set as 
$$E :=\{ (i ,j) \in V  \times V ; \, \tau _{ij}(\rat+)  > 0 \}. $$
Define $i : E \to V$ (resp. $t : E \to V$) as the projection to the  first (resp. second) coordinate and we call  $i(e)$ (resp. $t(e)$) the initial (resp. terminal) vertex of $e \in E$. 
We call $(V, E)$ the associated directed graph of $\tau$. 
Also, for each $e=(i,j) \in E$, we define $\Gamma_e :=  \supp \tau _{ij} $. 
Set $S_{\tau} :=(V, E, (\Gamma_e)_{e \in E})$, which we call the \textit{graph directed Markov system} (GDMS for short) induced by $\tau$. 
We say that $\tau$ is \textit{irreducible} if the associated directed graph $(V, E)$ is (strongly) connected. 
\end{definition}

Although one may think that our concept is similar to that of \cite{mu03}, ours is completely different from \cite{mu03}. 
Mauldin and Urba\'nski are concerned with the limit sets of systems of contracting maps, 
but in this paper, we discuss the dynamics and the Julia sets of GDMS consisting of rational maps 
which may have expanding property somewhere in the phase space.  

We denote by $\Poly$ the set of all polynomial maps of degree two or more. 
We work on subfamilies of $\rat+$ which satisfy the following $\condition$ condition. 

\begin{definition}\label{def:condition}
We say that a non-empty subset ${X}$ of $\rat+$ is {\condition}  if there exist an open subset $A$ of $\rat+$ and a closed subset  $B$ of $\rat+$ such that ${X}= A \cap B$  and at least one of the following (i) and (ii) holds. 
\begin{enumerate}
\item For each $(f_{0}, z_{0}) \in X\times \rs $, there exists a holomorphic family $\{ g_{\theta} \}_{\theta\in \Theta}$ of rational maps parametrized by a finite dimensional complex manifold $\Theta$ 
(i.e., $g_{\theta} \in \rat+$ for each $\theta \in \Theta$ and $(z, \theta) \mapsto g_{\theta}(z)$ is a holomorphic map from $\rs \times \Theta$ to $\rs$)
with $\{ g_{\theta} ; {\theta \in \Theta}\} \subset X$  such that $ g_{\theta_{0}} = f_{0}$ for some $\theta_{0} \in \Theta$ and $\theta \mapsto g_{\theta}( z_{0}) $ is non-constant in any neighborhood of $\theta_{0}$.

\item $X \subset \Poly$  and for each $(f_{0}, z_{0}) \in X\times \mathbb{C} $, there exists a holomorphic family $\{ g_{\theta} \}_{\theta \in\Theta}$ of rational maps parametrized by a complex manifold $\Theta$ with $\{ g_{\theta}; {\theta \in \Theta}\}\subset X$
 such that $ g_{\theta_{0}} = f_{0}$ for some $\theta_{0} \in\Theta$ and $\theta \mapsto g_{\theta}( z_{0}) $ is non-constant in any neighborhood of $\theta_{0}$. 
\end{enumerate}
\end{definition}

\begin{definition}\label{def:topMRDS}
Let  $X \subset \rat+$. 
Define $\MRDS(X)$ as the space of all  irreducible Markov random dynamical systems $\tau$ such that 
the topological  support $\supp \tau_{e}$ is compact and contained in $X$ for each $e \in E$, where $E$ is the set of directed edges of the associated directed graph of $\tau$.   

We endow $\MRDS(X)$ with  the following topology. 
A sequence $\{ \tau^{n} \}_{n \in \nn}$ in $\MRDS(X)$ converges to $\tau  \in \MRDS(X)$ if and only if 
\begin{enumerate}
\item the associated directed graph of $\tau^{n}$ is equal to  $(V, E)$ for sufficiently large $n$, where  $(V, E)$ denotes the associated directed graph of $\tau$,
\item the sequence of compact sets $\{ \supp \tau^{n}_{e} \}_{n\in \nn}$ converges to $\supp \tau_{e}$ with respect to  the Hausdorff metric for each directed edge $e \in E$, and 
\item the sequence of measures $\{ \tau^{n}_{e} \}_{n \in \nn}$ converges to $\tau_{e}$ for each $e \in E$  in the weak*-topology. 
\end{enumerate}
\end{definition}

\begin{definition}\label{def:FJ}
Let $S_\tau= (V, E, (\Gamma_e)_{e \in E})$ be the GDMS induced by an MRDS $\tau$.
\begin{enumerate}
\item A word  $e = (e_1,e_2, \dots, e_N) \in E^N$ with length $N \in \nn $ is said to be {\it admissible} if $t(e_n) = i(e_{n+1})$  for all  $n =1,2, \dots , N-1$. 
For this word $e$, we call $i(e_1)$ (resp. $t(e_N)$) the initial (resp. terminal) vertex of $e$ and we denote it by $i(e)$ (resp. $t(e)$). 
\item We set 
\begin{align*}
H(S_{\tau})&:= \{f_{N} \circ \dots  \circ f_{2} \circ f_{1} ;\\
& N \in \nn, f_{n} \in \Gamma_{e_n}, t(e_n)=i(e_{n+1})(\forall n=1,\ldots, N-1)\}, \\
H_i(S_{\tau})&:= \{f_{N} \circ  \dots \circ f_{2} \circ f_{1} \in H(S_{\tau}) ;\\
& N \in \nn, f_{n} \in \Gamma_{e_n}, t(e_n)=i(e_{n+1})(\forall n=1,\ldots, N-1),  i=i(e_1)\}, \\
H_i^j(S_\tau) &:= \{f_{N} \circ  \dots \circ f_{2} \circ f_{1} \in H_{i}(S_{\tau}) ;\\
& N \in \nn, f_{n} \in \Gamma_{e_n}, t(e_n)=i(e_{n+1})(\forall n=1,\ldots, N-1),  i=i(e_1), t(e_N)=j\}. 
\end{align*}
\item For each $i \in V$, we denote by $F_i(S_\tau)$  the set of  all points $z \in \rs$ for which there exists a neighborhood $U$ of $z$ in $\rs$ such that  the family  $\bigcup_{j\in V}H_i^j(S_\tau)$ of maps on $\rs$ is  equicontinuous on $U$. 
The set $F_i(S_\tau)$ is called the \textit{Fatou set} of $S_\tau$ at the vertex $i$,   and the complement $J_i(S_\tau) := \rs \setminus F_i(S_\tau)$ is called the \textit{Julia set} of $S_\tau$ at the vertex $i$.
\item Set $\mathbb{F}(S_\tau) :=\bigcup_{i \in V} F_i(S_\tau) \times \{i\}$ and $\mathbb{J}(S_\tau) :=\bigcup_{i \in V} J_i(S_\tau) \times \{i\}$.
\end{enumerate}
\end{definition}

We refer the readers to \cite{sw19} for examples of Julia sets of GDMSs and MRDSs. 

\subsection{Main Results}\label{ssec:MainRes}
In this subsection, we present the main results (Main Results \ref{mr:contractingBall}-\ref{mr:biffam}) of this paper. 
We first consider mean stable systems.

\begin{definition}\label{def:meanstable}
Let $\tau \in \MRDS(\rat+)$. We say that $\tau $ is \textit{mean stable} if the associated GDMS $S_{\tau}$ satisfies the following. 
There exist $N \in \nn$ and  two families of non-empty open sets $(U_{i})_{i \in V}$ and $(W_{i})_{i \in V}$ such that 
\begin{enumerate}
\renewcommand{\labelenumi}{(\Roman{enumi})}
\item $U_{i} \subset \overline{U_{i}} \subset W_{i} \subset \overline{W_{i}} \subset F_{i}(S_\tau)$ for each $i \in V,$ 
\item for each admissible word $e=(e_{1}, \dots, e_{N})$ with length $N$ and each $f_{n} \in \Gamma_{e_{n}} \, (n=1,\dots, N)$, we have $\overline{f_{N} \circ \dots  \circ f_{1} (W_{i(e)})} \subset U_{t(e)},$ and   
\item for each $z \in \rs$ and $i \in V$, there exist $j \in V_{}$ and $h \in H_{i}^{j}(S_\tau)$ such that $h(z) \in W_{j}.$ 
\end{enumerate}
\end{definition}

Forn\ae ss and Sibony proved in \cite{fs91} that small ``random perturbations'' of iteration of a single map give examples of mean stable systems, 
although they do not use the concept of mean stability. 
In this paper, we show that there exist a lot of mean stable systems in the space of MRDS. 
See also Example \ref{ex:MS}.

We show that if $\tau$ is mean stable, then  for every initial value, the sample-wise dynamics is contractive and the Lyapunov exponent is negative almost surely with respect to the probability measure $\tilde{\tau}$ on $(\rat+ \times E)^{\nn}$ (Main Results \ref{mr:contractingBall} and \ref{mr:NegLyap}). 
Here, $\tilde{\tau}$ is the measure naturally  associated with the Markov chain on $\rs \times \{1, \dots, m\}$   induced by $\tau$. 
See Definition \ref{titil_def} or \cite[Lemma 3.4]{sw19}. 

\begin{rem}
If an MRDS $\tau $ is mean stable, then the kernel Julia set is empty. 
In other words,  for every $i \in V$ and for every $z \in \rs$, there exist $j \in V$ and $h \in H_i^j(S_\tau)$ such that $h(z) \notin J_{j}(S_{\tau}),$ see \cite{sw19}. 
This property implies some interesting results. 
For instance, almost every random Julia set has Lebesgue measure $0.$ 
That is,  there exists a Borel set $\mathfrak{G}$ with  $\tilde{\tau} (\mathfrak{G}) =1 $ such that for every $(f_{n}, e_{n})_{n =1}^{\infty} \in \mathfrak{G}$, the Lebesgue measure of the complement of 
the set of  all points $z \in \rs$ for which there exists a neighborhood $U$ of $z$ in $\rs$ such that  the family 
$\{f_{n} \circ \dots  \circ f_{1}\}_{n=1}^{\infty}$ is  equicontinuous on $U$ 
is $0$. 
See \cite[Proposition 3.11]{sw19}. 

Additionally, for each attractor (attracting minimal set) $A$, 
the probability of random orbits tending to $A$ depends continuously on initial points. 
See \cite[Proposition 4.24]{sw19} and \cite{sumi11}. 
For the definition of attracting minimal set, see Definition \ref{def:attractingMinimal}. 
\end{rem}

\begin{result}[Theorem \ref{th:ContraBall}]\label{mr:contractingBall}
Let ${\tau} \in \MRDS(\rat+)$ be a mean stable system. 
Then we have all of the following. 
\begin{enumerate}
\item  There exists a constant $c \in (0,\, 1)$ satisfying that for each $z  \in \rs$, there exists a Borel subset $\mathfrak{F}$ of $(\rat+ \times E)^{\nn}$ with  $\tilde{\tau} (\mathfrak{F}) =1 $ 
 such that for every $(f_{n}, e_{n})_{n =1}^{\infty} \in \mathfrak{F}$, there exist $r= r(z, (f_{n}, e_{n})_{n =1}^{\infty}) >0$ and $K=K(z, (f_{n}, e_{n})_{n =1}^{\infty}) >0$ such that  $$ \diam  f_{n} \circ \dots  \circ f_{1} (B(z, r)) \leq K c^{n}$$ for every $n \in\nn.$   
 Here, we set $\mathrm{diam} A = \sup_{x, y \in A}  d(x, y)$ for any $A \subset \rs.$
\item  For each $z  \in \rs$, there exists a Borel subset $\mathfrak{F}$ of $(\rat+ \times E)^{\nn}$ with  $\tilde{\tau} (\mathfrak{F}) =1 $ 
 such that for every $(f_{n}, e_{n})_{n =1}^{\infty} \in \mathfrak{F}$, there exists an attracting minimal set $(L_{i})_{i \in V}$  such that  $d(f_{n}\circ \dots \circ f_1 (z), L_{t(e_{n})}) \to 0$ as $n \to \infty$. 
\item For every $i \in V$, the kernel Julia set 
$J_{\ker,i} (S_\tau) :=  \bigcap_{ j \in V} \bigcap_{h \in H_i^j(S) } h^{-1} (J_j(S))$ at $i$ is empty. 
\item There exists a Borel subset $\mathfrak{G}$ of $(\rat+ \times E)^{\nn}$ with  $\tilde{\tau} (\mathfrak{G}) =1 $ such that for every 
$\xi = (f_{n}, e_{n})_{n =1}^{\infty} \in \mathfrak{G}$, 
the ($2$-dimensional) Lebesgue measure of the Julia set $J_{\xi}$ is zero.   
Here, the Julia set $J_{\xi}$  of $\xi$  is the complement of  the set of  all points $z \in \rs$ for which there exists a neighborhood $U$ of $z$ in $\rs$ such that  the family $\{f_{n} \circ \dots  \circ f_{1}\}_{n=1}^{\infty}$ is  equicontinuous on $U$.  
\item There exist at most finitely many minimal sets of $S_{\tau}$. Moreover, 
each minimal set of $S_{\tau}$ is attracting. Also, 
for every minimal set $\mathbb{L} = (L_{i})_{i \in V}$ of $S_{\tau}$, 
we define the function $\mathbb{T}_{\mathbb{L}} \colon \rs \times V \to [0, 1]$ by
$$\mathbb{T}_{\mathbb{L}}(z,i) := \tilde{\tau}_i (\{ \xi = (f_n , e_n )_{n \in \nn}  ; \, d(f_{n}\circ \dots \circ f_1 (z), L_{t(e_{n})}) \to 0 \, (n \to \infty) \} )$$ 
for every point  $(z,i) \in \rs \times V$. 
Then $\mathbb{T}_{\mathbb{L}}$ is continuous on $\rs \times V$.  
Also, averaged function ${T}_{\mathbb{L}}(z) := \sum_{i \in V} p_{i} \mathbb{T}_{\mathbb{L}}(z,i)$ is continuous on $\rs,$ 
where  $(p_1,\dots,  p_m)$ is the probability vector in Definition \ref{titil_def}. 
\item Suppose $\tau$ has exactly $\ell$ minimal sets  $\mathbb{L}_{1}, \mathbb{L}_{2}, \dots, \mathbb{L}_{\ell}$. 
Then $${T}_{\mathbb{L}_{1}}(z) + {T}_{\mathbb{L}_{2}}(z) + \dots + {T}_{\mathbb{L}_{\ell}}(z) =1$$
for every $z \in \rs$. 
\end{enumerate}
\end{result}

\begin{result}[Theorem \ref{th:NegLyap}]\label{mr:NegLyap}
Let ${\tau} \in \MRDS(\rat+)$ be a mean stable system. 
Then there exists $\alpha < 0$ such that  the following holds. 
 For each $z  \in \rs$, 
there exists a Borel set $\mathfrak{F}$ with  $\tilde{\tau} (\mathfrak{F}) =1 $ such that for every $(f_{n}, e_{n})_{n =1}^{\infty} \in \mathfrak{F}$, we have 
$$\limsup_{n \to \infty} \frac{1}{n} \log \|D ( f_{n} \circ \dots  \circ f_{1}) (z) \| \leq \alpha.$$
Here, $Dg(z)$ denotes the complex differential of a holomorphic map $g$ at $z$ and $\| \cdot \| $ denotes the norm with respect to the spherical metric. 
\end{result}

These phenomena cannot happen in deterministic dynamical systems of a single map $f \in \rat+$, 
since,  in deterministic dynamical system of a rational map $f \in \rat+$, it is well known that the following holds for the Julia set $J(f)$ of $f$. 
For every neighborhood $O$ of a point of the Julia set $J(f),$  there exists $N \in \nn$ such that $f^{\circ n} (O) \supset J(f)$ for every $n \geq N.$ 
Here,  $f^{\circ n} $ denotes the $n$-th iterate of $f$. 
Note that $\diam J(f) > 0. $
We refer the readers to Milnor's book \cite{miln06} or Carleson and Gamelin's book \cite{CG} for details.
Besides,  it was shown by Ma\~{n}\'e \cite{mane} that 
the set of all points $z \in \rs$ with  $\liminf_{n \to \infty} n^{-1} \log \|D  f^{\circ n} (z) \|> 0$ has positive Hausdorff dimension.  
In this sense,  Main Result \ref{mr:contractingBall} and Main Result \ref{mr:NegLyap}  describe noise-induced order. 

We next consider how many mean stable systems exist. 
For our purpose, we give the following definition. 
 
\begin{definition}
Let $X \subset \rat+$.  
Define  $\mathcal{A}(X)$ as the set of all $\tau \in \MRDS(X)$ which are mean stable.  
Further, define  $\mathcal{C}(X)$ as the set of all $\tau \in \MRDS(X)$ which satisfy 
$\mathbb{J}(S_\tau) = \rs \times V$ and $\overline{\bigcup_{h \in H_i^j(S_\tau)}\{h(z)\}} = \rs$ for each $i,j \in V$ and $z\in \rs$. 
\end{definition}

Note that for each  $\tau \in \mathcal{C}(X)$, the set-valued dynamics of $S_{\tau}$ is topologically chaotic, which also   describes a noise-induced phenomenon.  
See Example \ref{ex:C}. 
We present the following  results regarding the thickness of $\mathcal{A}$ and $\mathcal{C}$. 

\begin{result}[Corollary \ref{cor:AisOpen} and Theorem \ref{th:denseMainRes}]\label{mr:dense}
Let $X \subset \rat+$. 
Then the set $\mathcal{A}(X)$ is  open in $\MRDS(X)$.
Moreover, if $X$ is $\condition$,  
then the disjoint union $\mathcal{A}(X) \cup \mathcal{C}(X)$ is dense in the space $\MRDS(X).$ 
\end{result}

As a corollary, we have the following result regarding the polynomial dynamics.

\begin{cor}[See Corollary \ref{cor:OD3}]\label{cor:OD}
Let $X$ be a non-degenerate subset of $\mathrm{Poly}$. Then the set $\mathcal{A}(X)$ is open and dense in $\mathrm{MRDS}(X)$. In particular, 
the set  $\mathcal{A}(\Poly)$  is open and dense in $\MRDS(\Poly).$ 
\end{cor}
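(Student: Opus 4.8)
The plan is to deduce this corollary from Main Result \ref{mr:dense} by showing that, for polynomial systems, the ``chaotic'' part $\mathcal{C}(X)$ is empty, so that the disjoint union $\mathcal{A}(X)\cup\mathcal{C}(X)$ collapses to $\mathcal{A}(X)$. Main Result \ref{mr:dense} already provides, for any non-degenerate $X\subset\rat+$, that $\mathcal{A}(X)$ is open in $\MRDS(X)$ and that $\mathcal{A}(X)\cup\mathcal{C}(X)$ is dense. Hence it suffices to prove $\mathcal{C}(X)=\emptyset$ whenever $X\subset\Poly$: then density of $\mathcal{A}(X)\cup\mathcal{C}(X)=\mathcal{A}(X)$ together with openness gives the first assertion.

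The heart of the matter is that every polynomial of degree $\geq 2$ has a superattracting fixed point at $\infty$, and that this is \emph{uniform} over the compact supports of an $\MRDS$ in $\MRDS(\Poly)$. Fix $\tau\in\MRDS(X)$ with $X\subset\Poly$ and set $\mathcal{F}:=\bigcup_{e\in E}\Gamma_e$, a finite union of compact subsets of $\Poly$, hence compact in $(\rat+,\kappa)$. Since uniform (i.e.\ $\kappa$) convergence preserves the topological degree, the strata $\mathrm{Rat}_d$ of maps of degree exactly $d$ are open and closed in $\rat+$, and $\kappa$ restricts to the coefficient topology on each $\mathrm{Rat}_d$. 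Therefore $\mathcal{F}$ meets only finitely many strata, its members have degree bounded by some $D$, uniformly bounded coefficients, and leading coefficients bounded away from $0$. From these bounds one obtains $R>0$, depending only on them, such that $|f(z)|\geq 2|z|$ for every $f\in\mathcal{F}$ and every $z$ with $|z|>R$; in particular each $f\in\mathcal{F}$ maps the open neighborhood $\Omega:=\{z\in\rs ; |z|>R\}$ of $\infty$ into itself.

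Consequently every composition $h\in\bigcup_{j\in V}H_i^j(S_\tau)$ maps $\Omega$ into $\Omega$, so the restrictions $h|_\Omega$ omit every value in the infinite set $\{|z|\le R\}$. By Montel's theorem the family $\bigcup_{j\in V}H_i^j(S_\tau)$ is normal, hence equicontinuous, on $\Omega$, so $\Omega\subset F_i(S_\tau)$ for every $i\in V$. In particular $(\infty,i)\in\mathbb{F}(S_\tau)$, whence $\mathbb{J}(S_\tau)\neq\rs\times V$ and $\tau\notin\mathcal{C}(X)$; as $\tau$ was arbitrary, $\mathcal{C}(X)=\emptyset$. For the ``in particular'' statement I must check that $X=\Poly$ is itself non-degenerate: writing $\Poly=\rat+\cap\Poly$ with $\rat+$ open and $\Poly$ closed in $\rat+$ (on each clopen stratum $\mathrm{Rat}_d$ the polynomials form the closed locus where the denominator is constant, and the strata partition $\rat+$), it remains to verify condition (ii) of Definition \ref{def:condition}. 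Given $(f_0,z_0)\in\Poly\times\mathbb{C}$, the translation family $g_\theta:=f_0+\theta$, $\theta\in\mathbb{C}$, stays in $\Poly$, passes through $f_0$ at $\theta=0$, and has $\theta\mapsto g_\theta(z_0)=f_0(z_0)+\theta$ non-constant; thus $\Poly$ is non-degenerate and the first part applies.

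The step I expect to be the main obstacle is the uniformity near $\infty$: one must make the passage from ``$\mathcal{F}$ is $\kappa$-compact in $\Poly$'' to ``bounded degree together with bounded coefficients and leading coefficients bounded below'' fully rigorous, because only this uniformity produces a single forward-invariant neighborhood $\Omega$ of $\infty$ that works simultaneously for all generators in $\mathcal{F}$, and hence for every finite composition in $\bigcup_{j}H_i^j(S_\tau)$. Once this is secured, the Montel argument and the reduction to Main Result \ref{mr:dense} are routine.
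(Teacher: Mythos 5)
Your proof is correct, but it takes a genuinely different route from the paper's. The paper proves density directly: since $X \subset \Poly$, every $\tau \in \MRDS(X)$ has the attracting minimal set $(\{\infty\})_{i \in V}$, and Lemma \ref{lem:AisDense} (the engine behind the dichotomy theorem) then immediately places $\tau$ in $\overline{\mathcal{A}(X)}$; openness is Corollary \ref{cor:AisOpen}, as in your argument. You instead invoke the full dichotomy (Theorem \ref{th:denseMainRes}) and eliminate the chaotic alternative by proving $\mathcal{C}(X) = \emptyset$, via the uniform escape estimate $|f(z)| \geq 2|z|$ on $\{|z| > R\}$ for all $f$ in the compact generator set, plus Montel, to get $\infty \in F_i(S_\tau)$ for every $i$. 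Your route is logically sound and has two side benefits: it makes explicit the compactness-to-uniformity step (bounded degree, bounded coefficients, leading coefficients bounded below) that the paper leaves implicit when it asserts $(\{\infty\})_{i\in V}$ is attracting, and it establishes the stronger standalone fact that no polynomial MRDS is chaotic. The cost is that it is heavier than necessary: for the emptiness of $\mathcal{C}(X)$ alone, Montel is overkill, since $h(\infty) = \infty$ for every $h \in H_i^j(S_\tau)$ already violates the orbit-density requirement $\overline{\bigcup_{h}\{h(z)\}} = \rs$ at $z = \infty$ (and $(\{\infty\})_{i\in V}$ being a proper forward-invariant compact family rules out minimality of $(\rs)_{i\in V}$ in Definition \ref{def:chaotic}); the paper's route is shorter because Lemma \ref{lem:AisDense} needs only the existence of one attracting minimal set, not the nonexistence of chaotic systems. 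Your verification that $\Poly$ is $\condition$ via the translation family $g_\theta = f_0 + \theta$ matches the paper's remark and is fine; note also that your argument, like the statement, needs only non-degeneracy of $X$, whereas the paper's Corollary \ref{cor:OD3} is phrased with condition (ii) of Definition \ref{def:condition} as an explicit hypothesis.
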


Corollary \ref{cor:OD}  is related to the famous conjecture of Hyperbolic Density (HD conjecture). 
See McMullen's book \cite{McM94} for details. 
The dynamics of iteration of a hyperbolic rational map $f$ has the following properties, 
which is somewhat similar to the dynamics of a mean stable system. 
A hyperbolic rational map $f$ is expanding on the Julia set whose area is zero, 
and every orbit is contracted to an attracting cycle with negative Lyapunov exponent on the Fatou set. 
The set of all hyperbolic rational maps is conjectured to be open and dense in the space $\rat+$. 
We solved the RDS version of the HD conjecture in some sense. 

Main Result \ref{mr:dense} is also related to
the dichotomy result for the real quadratic family by Lyubich \cite{L02}, which says
``almost every real quadratic map is either regular or stochastic''. 
Here, regular means hyperbolic and stochastic means that the dynamics has an absolutely continuous invariant probability measure. 
The latter seems to be similar to the element $\tau$ of $\mathcal{C}$ in our context 
although we do not show that 
it has an absolutely continuous invariant measure. 
For real analytic families of unimodal maps, see also the papers  \cite{ALdM} by Avila, Lyubich and de Melo and \cite{AM} by Avila and Moreira. 
Compared to these results, we need to control randomness which is possibly large. 
This large noise causes difficulty for our analysis. 

Last but not least, we consider  families of $\MRDS$s.  
We show that  for such a family,  the set of mean stable systems has full measure under certain conditions.  

\begin{result}[Theorem \ref{th:bif}]\label{mr:biffam}
Suppose $X \subset \rat+$ is $\condition$.  
Let $\Lambda$ be a topological space and let $m$ be a $\sigma$-finite Borel measure on $\Lambda$. 
Let $I =[ a, b )$ be an  interval on the real line $\mathbb{R}$, possibly $I= [a, \infty )$. 
Suppose $\Phi \colon \Lambda \times I \to \MRDS(X)$ satisfies the following three conditions. Denote $ \Phi (\lambda, s) =  \tau^{\lambda, s}$. 
 
\begin{enumerate}
\item $\Phi$ is continuous and the associated directed graphs $(V, E)$ of $\tau^{\lambda, s}$ are identical for all $(\lambda, s) \in \Lambda \times I$. 
\item $\supp \tau_{e}^{\lambda, s_{1}} \subset \mathrm{int} (\supp \tau_{e}^{\lambda, s_{2}})$ for each $e \in E,$ $\lambda \in \Lambda$ and $s_{1} < s_{2}$, where $\mathrm{int}$ denotes the set of all interior points with respect to the topological space $X.$ 
\item $ \tau^{\lambda, s}$ has at least one attracting minimal set for each $(\lambda, s) \in \Lambda \times I.$ 
\end{enumerate}

For each $s \in I,$ we  denote by $\mathrm{Bif}_s$ the set of all  $\lambda \in \Lambda$ satisfying  that $\tau^{\lambda, s}$ are not mean stable. 
Also, for each $\lambda \in \Lambda,$ we denote by $\mathrm{Bif}^{\lambda}$ the set of all $s \in I$ such that $\tau^{\lambda, s}$ are not mean stable. 
Suppose that there exists $\alpha  \in \nn$ such that $\# \mathrm{Bif}^{\lambda} \leq \alpha$   for each $\lambda \in \Lambda.$  

Then $m ( \mathrm{Bif}_s ) =0$ for all but countably many $s \in I.$    
\end{result}

We remark the following corollary holds. 

\begin{cor}[Corollary \ref{cor:7}]\label{cor:Bif}
Let $X, \Lambda, m, I, \Phi$ as in Setting \ref{setting:biffam}. 
Suppose that there exists $d \in \mathbb{N}$ with $d \geq 2$ such that $2 \leq \deg (g) \leq d$ for each $g \in X$. 
Then there exists $\alpha  \in \nn$ such that $\# \mathrm{Bif}^{\lambda} \leq \alpha$   for each $\lambda \in \Lambda.$
Hence, $m ( \mathrm{Bif}_s ) =0$ for all but countably many $s \in I.$   
\end{cor}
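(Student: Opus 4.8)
The plan is to treat the two assertions of the corollary separately. The statement $m(\mathrm{Bif}_s)=0$ for all but countably many $s \in I$ is \emph{not} reproved here: it is precisely the conclusion of Theorem \ref{th:bif} (Main Result \ref{mr:biffam}), whose only quantitative hypothesis is the existence of a uniform $\alpha \in \nn$ with $\#\mathrm{Bif}^{\lambda} \le \alpha$ for every $\lambda$. Thus the entire task is to produce such an $\alpha$, depending only on the degree bound $d$ and the (fixed, by hypothesis (1) of Setting \ref{setting:biffam}) number $m = \#V$ of vertices, after which one quotes Theorem \ref{th:bif} verbatim. I would obtain the bound on $\#\mathrm{Bif}^{\lambda}$ from two properties of the one-parameter families $s \mapsto \tau^{\lambda,s}$ (with $\lambda$ fixed): a uniform upper bound on the number $N(\lambda,s)$ of minimal sets of $S_{\tau^{\lambda,s}}$, together with monotonicity of $N(\lambda,\cdot)$ in $s$.

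First I would establish the uniform count. By Main Result \ref{mr:contractingBall}(v) every mean stable system has only finitely many minimal sets and each is attracting; the new ingredient is that this number is bounded by a constant $N_{\max}=N_{\max}(d,m)$ independent of $\lambda$, $s$, and of the particular maps in $\supp \tau^{\lambda,s}_e$. I would derive this from a Fatou--Shishikura-type count: distinct attracting minimal sets have disjoint immediate basins, and each basin is forced to contain a critical value of some generating map, whose number is controlled because every generator has degree at most $d$. Making this count \emph{uniform}, in particular independent of the length of the compositions $f_N \circ \dots \circ f_1$ appearing in $H_i^j(S_\tau)$ (whose degrees may be arbitrarily large), is the main obstacle, and is exactly where the hypothesis $2 \le \deg(g) \le d$ is genuinely used.

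Next I would prove monotonicity from hypothesis (2) of Setting \ref{setting:biffam}, namely $\supp \tau^{\lambda,s_1}_e \subset \mathrm{int}(\supp \tau^{\lambda,s_2}_e)$ for $s_1 < s_2$. This gives the inclusion of generator families $\Gamma^{\lambda,s_1}_e \subset \Gamma^{\lambda,s_2}_e$, so any compact set forward invariant under $S_{\tau^{\lambda,s_2}}$ is a fortiori forward invariant under $S_{\tau^{\lambda,s_1}}$, hence contains a minimal set of the latter. Since distinct minimal sets are disjoint, distinct $s_2$-minimal sets contain distinct $s_1$-minimal sets, whence $N(\lambda, s_2) \le N(\lambda, s_1)$. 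Thus $s \mapsto N(\lambda, s)$ is non-increasing and integer valued, and by hypothesis (3) it is bounded below by $1$; so it takes values in $\{1, \dots, N_{\max}\}$ and has at most $N_{\max}-1$ points of discontinuity.

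Finally I would show $\mathrm{Bif}^{\lambda}$ is contained in this discontinuity set, giving $\#\mathrm{Bif}^{\lambda} \le N_{\max}-1$, so that we may take $\alpha := N_{\max} \in \nn$ uniformly in $\lambda$. For this I argue the contrapositive: if $N(\lambda, \cdot)$ is locally constant at $s_0$, then the minimal sets persist without collision across $s_0$, and the strictly expanding inclusion of hypothesis (2) upgrades this persistence to the contraction estimates (I)--(III) of Definition \ref{def:meanstable}, so that $\tau^{\lambda,s_0}$ is mean stable. The openness of $\mathcal{A}(X)$ from Main Result \ref{mr:dense} guarantees the mean stable locus is open, while hypothesis (3) excludes the chaotic alternative $\mathcal{C}(X)$, since a system possessing an attracting minimal set cannot have full Julia set. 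With this $\alpha$, Theorem \ref{th:bif} immediately yields $m(\mathrm{Bif}_s)=0$ for all but countably many $s \in I$, completing the proof.
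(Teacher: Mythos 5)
Your skeleton (monotone minimal-set count via nested supports, a uniform bound $\alpha$, bifurcation parameters forcing a strict drop, then quoting Theorem \ref{th:bif}) is the paper's strategy, and your monotonicity argument is exactly the paper's unnumbered lemma in Section \ref{4}. But the two load-bearing steps are left unproven. First, the uniform bound: you yourself flag ``making this count uniform \dots{} is the main obstacle'' and then do not resolve it, and your proposed route --- each attracting minimal set's basin captures a critical value of some generating map --- is not a standard fact for GDMSs and runs straight into the problem you name, since the compositions in $H_i^j(S_\tau)$ have critical sets growing with word length. The paper's Lemma \ref{lem:attrminsetisfinite} sidesteps this entirely: because the directed graph $(V,E)$ is the same for all $(\lambda,s)$, fix one admissible loop at a vertex $i$ of length $N$ and one composite $h \in H_i^i(S_{\tau^{\lambda,a}})$, so $\deg h \leq d^N$; every attracting minimal set must contain an attracting periodic point of $h$, distinct minimal sets are disjoint, so the count is at most the number of attracting cycles of the \emph{single} rational map $h$, hence at most $2d^N-2 =: \alpha$, uniformly in $(\lambda,s)$ (nested supports keep $h$ in $H_i^i(S_{\tau^{\lambda,s}})$ for all $s$). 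This is where $2 \leq \deg(g) \leq d$ is used, and it is a one-line fix of your ``main obstacle,'' not a deep one.

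Second, your claim that local constancy of $N(\lambda,\cdot)$ at $s_0$ implies mean stability is asserted, not proved: ``persistence \dots{} upgrades \dots{} to the contraction estimates (I)--(III)'' is not an argument, and your fallback --- openness of $\mathcal{A}(X)$ plus hypothesis (3) excluding $\mathcal{C}(X)$ --- cannot close it, because $\mathcal{A}(X) \cup \mathcal{C}(X)$ is merely \emph{dense}: non-mean-stable, non-chaotic systems exist in abundance (Dirac measures, Examples \ref{ex:Jtouching} and \ref{ex:rotating}), so ruling out the chaotic alternative does not yield mean stability at $s_0$. The paper's mechanism (Lemma \ref{lem:bifFinite}, invoking the proof of Lemma \ref{lem:AisDense}) is different: for $s>s_0$ close, every minimal set of the enlarged system $\tau^{\lambda,s}$ contains a minimal set of $\tau^{\lambda,s_0}$, and if the latter were J-touching or sub-rotative, the non-degeneracy of $X$, the open mapping theorem and Montel's theorem would force non-normality in the interior of the enlarged minimal set, contradicting hypothesis (iii); hence only \emph{attracting} minimal sets of $\tau^{\lambda,s_0}$ survive inside minimal sets of $\tau^{\lambda,s}$, the count strictly drops whenever $\tau^{\lambda,s_0}$ is not mean stable, and mean stability off $\mathrm{Bif}^{\lambda}$ follows from Corollary \ref{cor:iff}. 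Finally, your bookkeeping ``$N$ takes values in $\{1,\dots,N_{\max}\}$'' is false at the left endpoint: at $s=a$ the system may be a Dirac measure (Example \ref{ex:quad}) with infinitely many minimal sets (every cycle), so $\mathrm{Bif}^{\lambda}$ is not controlled by discontinuities of an everywhere-bounded function; the paper's one-sided drop argument is what handles this.
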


It is interesting that our result can be applied to the quadratic family $f_{c}(z)= z^{2} +c$, see Example \ref{ex:quad}. 
On the dynamics of iteration of a single map,  
Shishikura \cite{Shi} showed that the bifurcation locus of the quadratic family, namely the boundary of the Mandelbrot set, has Hausdorff dimension $2$. 
However, it is still open whether or not the boundary of the Mandelbrot set has positive area, see \cite{MNTU}. 
We solved the RDS version of this problem in a general form. 

Note that Main Result \ref{mr:biffam}  is a new result even for i.i.d.\ systems. 
If $m=\# V =1$, then Main Result \ref{mr:contractingBall},  Main Result \ref{mr:NegLyap} and  Main Result \ref{mr:dense} coincide with the results for i.i.d.\ systems, which were shown in \cite{sumi13}. 
Also, our definition of mean stability coincides with the definition for i.i.d.\ systems if $m=1$. 
Our results and concepts are new for the case $m \neq 1$. 

The authors believe that we can work on ergodic properties of mean stable systems in the future.  
For mean stable i.i.d.\ systems, the first author proved in \cite{sumi11} that 
there exist finitely many invariant probability measures (or cycles of probability measures) supported on minimal sets. 
Moreover, he showed the spectral decomposition for the (dual of) transition operators, the spectral gaps for the transition operators and other measure-theoretic results. 
The authors believe that we can generalize these results to non-i.i.d.\ settings.

\subsection{Structure of the paper}
In Section \ref{2}, we define minimal sets of MRDS and give the classification of them. 
More precisely, a minimal set is one of the three types; it intersects the Julia set, it intersects a rotation domain, or it is \textit{attracting} as defined in Section \ref{2}. 
This is the key to our work. 
In Section \ref{3}, we show the fundamental properties of mean stable systems. 
In particular, we explain the relation between mean stable systems and attracting minimal sets. 
By using these results,  we prove Main Results \ref{mr:contractingBall},  \ref{mr:NegLyap} and \ref{mr:dense}. 
In Section \ref{4}, we consider families of $\MRDS$ and investigate their bifurcations. 
Furthermore, we show Main Result \ref{mr:biffam}.

\subsection*{Acknowledgment}
The authors would like to thank Rich Stankewitz for his valuable comments. 
They are grateful to the referees for many detailed and constructive comments. 
The first author is partially supported by JSPS Grant-in-Aid for Scientific Research (B) Grant Number JP 19H01790. 
The second author is partially supported by  JSPS Grant-in-Aid for JSPS Fellows Grant Number JP 19J11045.

\section{Classification of minimal sets of Markov RDS}\label{2}
In this section, we consider general graph directed Markov systems consisting of rational maps before analyzing Markov random dynamical systems. 
In \cite{sw19}, the authors defined more general GDMS regarding continuous self-maps, but we are concerned with rational maps of degree two or more in this paper. 

\begin{definition}
Let $(V,E)$ be a directed graph with finite vertices and finite edges, and let $\Gamma_e$ be a non-empty subset of $\rat+$ indexed by a directed edge $e \in E$.
We call $S = (V, E,(\Gamma_e)_{e \in E})$ a {\it graph directed Markov system} (GDMS for short).  
The symbol $i(e)$ (resp. $t(e)$) denotes the initial (resp. terminal) vertex of each directed edge $e \in E$.   
We say that $S = (V, E,(\Gamma_e)_{e \in E})$ is {\it compactly generated} if  $\Gamma_{e}$ is compact for each $e \in E$. 
\end{definition}

\begin{definition}
We say that a GDMS   $S = (V, E,(\Gamma_e)_{e \in E})$ is {\it irreducible} if the directed graph $(V, E)$ is strongly connected. 
We say that MRDS $\tau = (\tau_{ij})_{i, j =1, \dots, m}$ is {\it irreducible} if the induced GDMS $S_{\tau}$ is irreducible. 
\end{definition}

In this paper, we   usually assume that all GDMSs $S$  and MRDSs $\tau$ are irreducible. 
For each GDMS $S= (V, E,(\Gamma_e)_{e \in E})$, 
we define admissible words, $H^{j}_{i}(S)$, $F_i(S)$, $J_i(S)$, $\mathbb{F}(S)$ and $\mathbb{J}(S)$ 
similarly as in Definition \ref{def:FJ} replacing $S_{\tau}$ by $S$. 
For instance, the Fatou set $F_i(S)$ at  $i \in V$ is the set of  all points $z \in \rs$ for which there exists a neighborhood $U$ of $z$ in $\rs$ such that  the family  $\bigcup_{j\in V}H_i^j(S)$ of maps on $\rs$ is  equicontinuous on $U$, and  
the complement $J_i(S) := \rs \setminus F_i(S)$ is  the Julia set of $S$ at the vertex $i$.

If a GDMS $S= (V, E,(\Gamma_e)_{e \in E})$ is irreducible, then the Julia set $J_{i}(S)$ contains uncountably many points, see \cite[Lemma 4.8]{sw19}.  
In particular, the Fatou set $F_{i}(S)$ admits the hyperbolic  metric for each $i \in V$.

In order to analyze dynamical systems, it is useful to investigate minimal sets. 
For our purpose, we define minimal sets of GDMSs as follows. 

\begin{notation}\label{not:set-valued}
For a family $\Omega \subset \rat+$ and a set $Y \subset \rs$, we set  
$$\Omega (Y) := \bigcup_{f \in \Omega} f(Y),\,\Omega ^{-1}(Y) :=\bigcup_{f \in \Omega} f^{-1}(Y) .$$
If $ \Omega = \emptyset$, then we set $\Omega (Y) := \emptyset, \,
\Omega ^{-1}(Y) := \emptyset .$
\end{notation}

\begin{definition}
Let $S= (V, E,(\Gamma_e)_{e \in E})$ be an irreducible GDMS and 
let $K_{i}$ and $L_i$ be subsets of $\rs$ for each $i \in V$. 
We consider the families  $(K_i)_{i \in V}$ and $(L_i)_{i \in V}$  indexed by $i \in V$.
\begin{enumerate}
\item We say that $(L_i)_{i \in V}$ is {\it forward $S$-invariant} if $\Gamma_e( L_{i(e)}) \subset L_{t(e)}$  for all $e \in E$. 
\item We write $ (K_i)_{i \in V} \subset (L_i)_{i \in V}$ if $K_i \subset L_i$ for each $i \in V.$ 
\item  We say that  $(K_i)_{i \in V}$ is  a  {\it minimal}  set of $S$ if it is a minimal element of the set of all 
 $(L_i)_{i \in V}$ satisfying that  $L_{i}$ is non-empty and  compact for each  $i \in V$  and  $(L_i)_{i \in V}$   is forward $S$-invariant,  
with respect to the order $\subset .$
\end{enumerate}
Also, for any irreducible MRDS $\tau$, any minimal set of $S_{\tau}$ is called a minimal set of $\tau$.
\end{definition}

For  an irreducible GDMS $S= (V, E,(\Gamma_e)_{e \in E}),$ the Fatou set $(F_{i} (S) )_{i \in V}$ is   forward $S$-invariant. 
For the proof, see \cite[Lemma 2.15]{sw19}. 
The following can be proved easily. 

\begin{lemma}\label{lem:OrbitIsDence}
Let $S= (V, E,(\Gamma_e)_{e \in E})$ be an irreducible GDMS and 
let $(L_i)_{i \in V}$ be a minimal set of $S$. 
Then, for each $k \in V$ and each $z_0 \in L_k$, we have $L_i = \overline{  H_k^i(S) (\{ z_0\}) } $ for each $i \in V$. 
\end{lemma}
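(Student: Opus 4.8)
The plan is to introduce the candidate family
$M_i := \overline{H_k^i(S)(\{z_0\})}$ for $i \in V$, show that $(M_i)_{i \in V}$ is a forward $S$-invariant family of non-empty compact sets satisfying $(M_i)_{i \in V} \subset (L_i)_{i \in V}$, and then invoke the minimality of $(L_i)_{i \in V}$ with respect to the order $\subset$ to force $M_i = L_i$ for every $i$.

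First I would establish the inclusion $(M_i)_{i \in V} \subset (L_i)_{i \in V}$. Since $z_0 \in L_k$ and $(L_i)_{i \in V}$ is forward $S$-invariant, every $h = f_N \circ \dots \circ f_1 \in H_k^i(S)$ sends $z_0$ into $L_i$: applying the maps one edge at a time along the admissible word $(e_1, \dots, e_N)$ from $k$ to $i$ keeps the image inside the corresponding $L_{t(e_n)}$ at each step, ending in $L_{t(e_N)} = L_i$. Hence $H_k^i(S)(\{z_0\}) \subset L_i$, and because each $L_i$ is compact, hence closed in $\rs$, passing to closures gives $M_i \subset L_i$. In particular each $M_i$ is a closed subset of the compact space $\rs$, so $M_i$ is compact.

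Next I would check non-emptiness and forward invariance. Non-emptiness follows from irreducibility: strong connectedness of $(V,E)$ yields an admissible word from $k$ to $i$, and since each $\Gamma_e$ is non-empty, $H_k^i(S) \neq \emptyset$, whence $H_k^i(S)(\{z_0\}) \neq \emptyset$. For invariance, fix $e \in E$ with $i(e) = p$ and $t(e) = q$, and take $f \in \Gamma_e$. For any $h \in H_k^p(S)$, appending the edge $e$ to the word of $h$ is admissible (the terminal vertex of $h$ is $p = i(e)$), so $f \circ h \in H_k^q(S)$ and thus $f(h(z_0)) \in H_k^q(S)(\{z_0\}) \subset M_q$. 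This gives $f\bigl(H_k^p(S)(\{z_0\})\bigr) \subset M_q$; by continuity of the single map $f$ and closedness of $M_q$, I may pass to the closure to obtain $f(M_p) \subset M_q$. Since $f \in \Gamma_e$ was arbitrary, $\Gamma_e(M_{i(e)}) \subset M_{t(e)}$, so $(M_i)_{i \in V}$ is forward $S$-invariant.

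Finally, $(M_i)_{i \in V}$ is a family of non-empty compact sets that is forward $S$-invariant and contained in the minimal family $(L_i)_{i \in V}$, so minimality forces $M_i = L_i$ for every $i \in V$, which is the claim. The only step needing slight care, and the mild obstacle, is the passage to closures in the invariance argument: it is continuity of the individual map $f$ (rather than any equicontinuity of the whole family) that lets $f(M_p) \subset M_q$ follow from $f\bigl(H_k^p(S)(\{z_0\})\bigr) \subset M_q$. Everything else is bookkeeping on admissible words, together with irreducibility to guarantee non-emptiness.
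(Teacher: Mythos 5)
Your proof is correct and follows essentially the same route as the paper: define $M_i = \overline{H_k^i(S)(\{z_0\})}$, verify that $(M_i)_{i\in V}$ is a non-empty, compact, forward $S$-invariant family contained in $(L_i)_{i\in V}$, and conclude by minimality. The paper dismisses the forward-invariance check as easy, whereas you correctly fill in the details (including the passage to closures via continuity of each single map), so there is nothing to fix.
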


\begin{proof}
Define $K_i =  \overline{  H_k^i(S) (\{ z_0\}) }  .$  
It is easy to prove that $(K_i)_{i \in V}$ is forward $S$-invariant. 
Since $(L_i)_{i \in V}$ is forward $S$-invariant and $z_0 \in L_k$, we have $K_i \subset L_i$ for each $i \in V$. 
Thus,  $L_i = \overline{  H_k^i(S) (\{ z_0\}) }  $ for each $i \in V$  by the minimality of  $(L_i)_{i \in V}$. 
\end{proof}

\begin{lemma}\label{lem:selfsimilar}
Let $S= (V, E,(\Gamma_e)_{e \in E})$ be a compactly generated irreducible GDMS and 
let $(L_i)_{i \in V}$ be a minimal set. 
Then $L_j =  \bigcup_{t(e) = j} \Gamma_e( L_{i(e)}) $ for each $j \in V$.   
\end{lemma}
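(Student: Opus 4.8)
The plan is to introduce the candidate family $M_j := \bigcup_{t(e) = j} \Gamma_e( L_{i(e)})$ for each $j \in V$ and to verify that $(M_j)_{j \in V}$ is itself a non-empty, compact, forward $S$-invariant family contained in $(L_i)_{i \in V}$. Once this is established, the minimality of $(L_i)_{i \in V}$ forces $M_j = L_j$ for every $j$, which is precisely the asserted equality. So the whole argument reduces to checking the defining properties of a minimal-set-competitor for the family $(M_j)_{j \in V}$.

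First I would dispose of the inclusion $M_j \subset L_j$. Forward $S$-invariance of $(L_i)_{i \in V}$ says $\Gamma_e(L_{i(e)}) \subset L_{t(e)}$ for every $e \in E$; taking the union over all edges $e$ with $t(e) = j$ gives $M_j \subset L_j$ immediately. In particular each $M_j$ sits inside the compact set $L_j$. Non-emptiness of $M_j$ follows from irreducibility: since $(V, E)$ is strongly connected and finite, every vertex $j$ has at least one incoming edge $e$, and both $\Gamma_e$ and $L_{i(e)}$ are non-empty, so $\Gamma_e(L_{i(e)}) \neq \emptyset$.

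The key point, and the only place a hypothesis really does work, is compactness of $M_j$, which is where compact generation enters. Since $\Gamma_e$ is compact in $\rat+$ and $L_{i(e)}$ is compact in $\rs$, and the evaluation map $(f, z) \mapsto f(z)$ from $\rat+ \times \rs$ to $\rs$ is continuous, the set $\Gamma_e(L_{i(e)})$ is the continuous image of the compact product $\Gamma_e \times L_{i(e)}$, hence compact; a finite union of such sets (the graph has finitely many edges) is again compact. This is exactly what lets me work with $M_j$ directly rather than its closure, so that minimality applies without an annoying boundary discrepancy. I expect this compactness step, though short, to be the substantive one: everything else is formal set manipulation, whereas here the compact-generation assumption is indispensable.

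It then remains to check that $(M_j)_{j \in V}$ is forward $S$-invariant. For an edge $e' \in E$, the inclusion already proved gives $M_{i(e')} \subset L_{i(e')}$, and since $\Gamma_{e'}(\cdot)$ is monotone with respect to set inclusion, $\Gamma_{e'}(M_{i(e')}) \subset \Gamma_{e'}(L_{i(e')})$. But $\Gamma_{e'}(L_{i(e')})$ is one of the terms in the union defining $M_{t(e')}$, so $\Gamma_{e'}(M_{i(e')}) \subset M_{t(e')}$, which is forward $S$-invariance. Having exhibited $(M_j)_{j \in V}$ as a non-empty, compact, forward $S$-invariant family with $M_j \subset L_j$, the minimality of $(L_i)_{i \in V}$ yields $M_j = L_j$ for each $j \in V$, completing the proof.
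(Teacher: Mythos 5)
Your proposal is correct and follows essentially the same route as the paper's proof: the paper likewise defines $K_j = \bigcup_{t(e)=j}\Gamma_e(L_{i(e)})$, notes it is compact (by compact generation and finiteness of $E$) and forward $S$-invariant, observes $K_j \subset L_j$ from forward invariance of $(L_i)_{i\in V}$, and concludes equality by minimality. You merely spell out details the paper leaves implicit, namely non-emptiness via irreducibility and compactness via continuity of the evaluation map $(f,z)\mapsto f(z)$.
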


\begin{proof}
Note that $ \bigcup_{t(e) = j}  \Gamma_e( L_{i(e)}) $ is compact since $\Gamma_e$ is compact and $E$ is finite.  
Define $K_j =  \bigcup_{t(e) = j} \Gamma_e( L_{i(e)})   .$  
It is easy to prove that $(K_i)_{i \in V}$ is forward $S$-invariant. 
Since $(L_i)_{i \in V}$ is forward $S$-invariant, we have $L_j \supset  \bigcup_{t(e) = j}  \Gamma_e( L_{i(e)}).$ 
Thus,  $L_i =  \bigcup_{t(e) = j}  \Gamma_e( L_{i(e)})$ for each $i \in V$ by the minimality of  $(L_i)_{i \in V}$. 
\end{proof}

We now define attracting minimal set, 
which is one of the most important concepts in this paper. 

\begin{definition}\label{def:attractingMinimal}
Let $S= (V, E,(\Gamma_e)_{e \in E})$ be an irreducible GDMS and let $(L_i)_{i \in V}$ be a minimal set of $S.$ 
We say that  $(L_i)_{i \in V}$ is  an \textit{attracting} minimal set of $S$ if there exist $N \in \nn$ and open sets $(U_{i})_{i \in V}$ and $(W_{i})_{i \in V}$ such that 
\begin{enumerate}
\item $L_{i} \subset  U_{i} \subset \overline{U_{i}} \subset W_{i} \subset \overline{W_{i}} \subset F_{i}(S)$ for each $i \in V$ and 
\item  for each admissible word $e=(e_{1}, \dots, e_{N})$ with length $N$ and each $f_{n} \in \Gamma_{e_{n}} \, (n=1,\dots, N)$, we have $\overline{f_{N} \circ \dots  \circ f_{1} (W_{i(e)})} \subset U_{t(e)}.$  
\end{enumerate}
Also, for any irreducible MRDS $\tau$, any attracting minimal set of $S_{\tau}$ is called an attracting  minimal set of $\tau$.
\end{definition}

Attracting minimal sets play a crucial role to figure out the stability of (random) dynamical systems. 
Regarding minimal sets, we have some equivalent conditions for them to be attracting. 

\begin{lemma}\label{lem:attractingMinimal}
Let $S= (V, E,(\Gamma_e)_{e \in E})$ be a  compactly generated irreducible GDMS and let $(L_i)_{i \in V}$ be a minimal set for $S$ such that $(L_i)_{i \in V} \subset (F_i(S))_{i \in V} .$ 
Let $O_{i}$ be the finite union of the connected components of $F_{i}(S)$ each of which intersects $L_{i}.$ 
Denote by $\hypd$  the hyperbolic metric on each connected component of $O_{i}$  for each $i \in V.$ 
Then the following are equivalent.  
\begin{enumerate}
\item $(L_i)_{i \in V}$ is attracting. 
\item There exists $N \in \nn$ such that for each admissible word $e=(e_{1}, \dots, e_{N})$ with length $N$ and for each $f_{n} \in \Gamma_{e_{n}} \, (n=1,\dots, N)$, there exists $c \in (0, 1)$ such that for each connected component $U$ of $O_{i}$ and for each $x, y \in U,$ we have  
$$\hypd( f_{N}\circ \dots \circ f_{1} (x) ,  f_{N}\circ \dots \circ f_{1} (y)   ) \leq c  \hypd (x , y).$$
\item The constant $c$ above can be chosen so that $c$ does not depend on neither admissible words $e$ nor maps $f_{n}$; there exist $N \in \nn$ and  $c' \in (0, 1)$  such that for each admissible word $e=(e_{1}, \dots, e_{N})$ with length $N,$ for each $f_{n} \in \Gamma_{e_{n}} \, (n=1,\dots, N)$, for  each connected component $U$ of $O_{i}$ and for each $x, y \in U,$ we have  
$$\hypd( f_{N}\circ \dots \circ f_{1} (x) ,  f_{N}\circ \dots \circ f_{1} (y)   ) \leq c'  \hypd (x , y).$$
\end{enumerate} 
\end{lemma}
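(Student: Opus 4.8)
The plan is to prove the cyclic chain (i) $\Rightarrow$ (iii) $\Rightarrow$ (ii) $\Rightarrow$ (i), where (iii) $\Rightarrow$ (ii) is immediate (take $c=c'$). Everything rests on two standard facts about the hyperbolic metric. First, the Schwarz--Pick lemma: any holomorphic map between hyperbolic Riemann surfaces is non-expansive for the hyperbolic metrics, and if $A\subset B$ are hyperbolic domains with $\overline{A}$ a compact subset of $B$, then the inclusion $A\hookrightarrow B$ is a strict contraction, $\hypd_{B}(x,y)\le c\,\hypd_{A}(x,y)$ for all $x,y\in A$, with $c=\sup_{z\in\overline{A}}\rho_{B}(z)/\rho_{A}(z)<1$ for the respective densities $\rho$. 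Second, a bookkeeping observation: since $(F_i(S))_{i\in V}$ and $(L_i)_{i\in V}$ are both forward $S$-invariant, every $f\in\Gamma_e$ carries each connected component of $O_{i(e)}$ into a single connected component of $O_{t(e)}$ (its image is connected, lies in $F_{t(e)}(S)$, and meets $L_{t(e)}$), so every admissible composition $g=f_{N}\circ\dots\circ f_{1}$ maps a component of $O_{i(e)}$ into a component of $O_{t(e)}$.

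For (i) $\Rightarrow$ (iii) I would fix the $N$ and the open sets $(U_i),(W_i)$ furnished by the attracting property; after intersecting with $O_i$ one may assume $W_i\subset O_i$ and $\overline{U_i}\subset W_i\subset\overline{W_i}\subset O_i$ (a limit point of $O_i$ lying in $F_i(S)$ lies in $O_i$, since components of $F_i(S)$ are closed in $F_i(S)$). For an admissible word $e$ of length $N$ and maps $f_n\in\Gamma_{e_n}$, the relation $\overline{g(W_{i(e)})}\subset U_{t(e)}$ lets $g$ factor as $W_{i(e)}\xrightarrow{g}U_{t(e)}\hookrightarrow W_{t(e)}$; the first arrow is non-expansive and the inclusion is a strict contraction by the compact-containment fact, which yields a contraction constant for each choice of maps. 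To make this constant uniform I would use that $\prod_{n=1}^{N}\Gamma_{e_n}$ is compact and that there are only finitely many admissible words, so the family of all length-$N$ compositions is compact with respect to $\kappa$ and the contraction constant, being upper semicontinuous in the maps, attains a maximum $c'<1$. The delicate part is to phrase this contraction in terms of the \emph{component} metric $\hypd$ rather than the metric of the auxiliary set $W_{t(e)}$.

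For (ii) $\Rightarrow$ (i) I would first upgrade the word-dependent constant in (ii) to the uniform constant of (iii) by the same compactness and upper-semicontinuity argument, and then run the construction in reverse. Given $c'<1$, fix $r>0$ small enough that the open hyperbolic $r$-neighbourhood $W_i$ of $L_i$ inside $O_i$ has compact closure contained in $O_i\subset F_i(S)$ (possible since $L_i$ is compact and $L_i\subset O_i$), and set $U_i$ to be the open hyperbolic $\tfrac{1+c'}{2}r$-neighbourhood of $L_i$. For $z\in W_{i(e)}$ choose $w\in L_{i(e)}$ in the same component with $\hypd(z,w)<r$; then $g(w)\in L_{t(e)}$ by forward invariance and $\hypd(g(z),g(w))\le c'\,\hypd(z,w)<c'r$, so $\overline{g(W_{i(e)})}\subset U_{t(e)}$. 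Together with $\overline{U_i}\subset W_i\subset\overline{W_i}\subset F_i(S)$ this verifies the attracting property with the same $N$, where the self-similarity $L_j=\bigcup_{t(e)=j}\Gamma_e(L_{i(e)})$ from Lemma \ref{lem:selfsimilar} ensures the neighbourhoods are correctly nested across vertices.

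The main obstacle is the forward direction, precisely the strict contraction of the component metric $\hypd$: near $\partial U$ the infinitesimal factor $\|Dg\|_{\mathrm{hyp}}$ can degenerate to $1$ at boundary points that are sent back toward the Julia set, so a uniform $c'<1$ is available only once one knows that the image of each component is compactly contained in the target component. Securing this compact containment is where the attracting hypothesis must be used essentially---through a normal-families (Montel) argument controlling the boundary behaviour---rather than through the bare Schwarz--Pick estimate; obtaining the constant uniformly over the compact but infinite fibres $\Gamma_e$ is the accompanying technical point, handled by the upper-semicontinuity argument above.
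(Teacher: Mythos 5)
Your overall architecture matches the paper's: the easy implications handled first ((iii)~$\Rightarrow$~(ii) trivially, and your hyperbolic $r$- and $\tfrac{1+c'}{2}r$-neighbourhood construction for the converse direction is a correct fleshed-out version of what the paper dismisses as ``statement (iii) immediately implies statement (i)''), the bookkeeping that forward $S$-invariance of $(F_i(S))_{i\in V}$ and $(L_i)_{i\in V}$ forces each composition to map components of $O_{i(e)}$ into components of $O_{t(e)}$ (the paper states exactly this), and Schwarz--Pick plus the sets $(U_i),(W_i)$ of Definition~\ref{def:attractingMinimal} for the forward direction. One technical slip along the way: the optimal Lipschitz constant $g\mapsto\sup_{x\neq y}\hypd(g(x),g(y))/\hypd(x,y)$ is a supremum of functions \emph{continuous} in $g$, hence lower semicontinuous, not upper; so your ``upper semicontinuity attains a maximum on the compact family'' device for uniformizing $c$ is not available as stated, and the uniformity over $\prod_n\Gamma_{e_n}$ (the content of the paper's ``since $S$ is compactly generated'') must be extracted differently.

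The genuine gap is exactly where you flag it, in (i)~$\Rightarrow$~(iii): you reduce the contraction in the component metric to the claim that each length-$N$ composition maps every component of $O_{i(e)}$ into a \emph{compactly contained} subset of the target component, to be secured by an unspecified Montel argument. That claim is false, so this route cannot be completed. Take $V$ and $E$ singletons, $\Gamma=\{f\}$ with $f(z)=z^{2}$, and $L=\{0\}$; this minimal set is attracting (e.g.\ $W=D(0,1/2)$, $U=D(0,3/10)$, $N=1$) and $O=\mathbb{D}$, yet $f^{\circ N}(z)=z^{2^{N}}$ maps $\mathbb{D}$ \emph{onto} $\mathbb{D}$ for every $N$, its hyperbolic derivative $2^{N}|z|^{2^{N}-1}(1-|z|^{2})/(1-|z|^{2^{N+1}})$ tends to $1$ as $|z|\to1$, and for real $x$ fixed and $y\to1^{-}$ one computes $\hypd(x^{2^{N}},y^{2^{N}})=\hypd(x,y)-N\log 2+O(1)$, so the ratio of distances tends to $1$ and no uniform $c<1$ exists on the whole component. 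What your $W$-metric factorization actually proves --- uniform contraction on the hyperbolically bounded sets $W_{i}\cap O_{i}$ --- is the usable statement (and is all that the rest of the paper invokes, e.g.\ in Lemma~\ref{lem:prtrbAttrMinSet}); the passage from there to all pairs $x,y$ in a full component is not a ``delicate phrasing'' issue but a genuine obstruction, one that incidentally also bears on the paper's own one-sentence justification of this step (``It follows that there exists $c\in(0,1)$\ldots''). To make the forward implication provable by your method you would have to restrict the pairs $x,y$ (to the $W_i$, or to compact subsets of $U$ with $c$ depending on them), not hope for compact containment of images of whole components.
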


\begin{proof}
Statement (iii) immediately implies statement (i), and  statement (ii) implies statement (iii)  since $S$ is compactly generated. 

Suppose that $(L_{i})_{i \in V}$ is an attracting minimal set, and we show  the statement (ii) holds.  
Take $N \in \nn$, and open sets $(U_{i})_{i \in V}$ and $(W_{i})_{i \in V}$ as in Definition  \ref{def:attractingMinimal}. 
For each admissible word $e=(e_{1}, \dots, e_{N})$ with length $N$ and for each $f_{n} \in \Gamma_{e_{n}} \, (n=1,\dots, N)$, 
 we have $\overline{f_{N} \circ \dots  \circ f_{1} (W_{i(e)})} \subset U_{t(e)} \subset W_{t(e)}.$  
 Also, $f_{N}\circ \dots \circ f_{1}( O_{i(e)} ) \subset O_{t(e)}$ since $(F_{i} (S) )_{i \in V}$ is   forward $S$-invariant. 
It follows that there exists $c \in (0, 1)$ such that  for each connected component $U$ of $O_{i}$ and for each $x, y \in U,$ we have  
$$\hypd( f_{N}\circ \dots \circ f_{1} (x) ,  f_{N}\circ \dots \circ f_{1} (y)   ) \leq c  \hypd (x , y).$$
Thus, we have completed our proof. 
\end{proof}

The following proposition is very important to prove our main results. 
On (random) dynamics of holomorphic maps, we can classify the minimal sets as follows. 

\begin{prop}\label{prop:clsfcationOfMin}
Let $S= (V, E,(\Gamma_e)_{e \in E})$ be a compactly generated irreducible GDMS and let $(L_i)_{i \in V}$ be a minimal set of $S.$
Then $(L_i)_{i \in V}$ satisfies  one of the following three conditions. 
\begin{enumerate}
\renewcommand{\labelenumi}{(\Roman{enumi})}
\item The set  $(L_i)_{i \in V}$ intersects the Julia set; $L_{i} \cap J_{i}(S) \neq \emptyset$ for some $i \in V.$
\item The set $(L_i)_{i \in V} \subset (F_i(S))_{i \in V}$ and $(L_i)_{i \in V}$ intersects  a rotation domain; there exist $i \in V$ and $h \in H_{i}^{i}(S)$ such that $L_{i} \cap D \neq \emptyset$, where $D$ is  a connected component of $F_{i}(S)$ on which  $h$ is holomorphically conjugate to an irrational rotation on the unit disk or an annulus.  
\item The set $(L_i)_{i \in V}$ is attracting. 
\end{enumerate}
\end{prop}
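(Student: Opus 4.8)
The plan is to prove the trichotomy as a two-stage dichotomy. First I would ask whether $(L_i)_{i\in V}$ meets the Julia set. If $L_i\cap J_i(S)\neq\emptyset$ for some $i\in V$, we are in case (I) and there is nothing more to do. So for the rest I assume $(L_i)_{i\in V}\subset (F_i(S))_{i\in V}$, i.e.\ each compact set $L_i$ lies in the open set $F_i(S)$; the task is then to separate the attracting case (III) from the rotation-domain case (II). The whole point is that on the Fatou set holomorphic maps are non-expanding for the hyperbolic metric, so only two things can happen: genuine contraction, or an isometry.

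Next I would install the hyperbolic machinery of Lemma \ref{lem:attractingMinimal}. Since $L_i$ is compact and contained in the open set $F_i(S)$, it meets only finitely many connected components of $F_i(S)$; let $O_i$ be their union, carrying the hyperbolic metric $\hypd$ on each component. As in the proof of Lemma \ref{lem:attractingMinimal}, forward invariance of the Fatou set together with $\Gamma_e(L_{i(e)})\subset L_{t(e)}$ shows that every $f\in\Gamma_e$ maps each component of $O_{i(e)}$ into a component of $O_{t(e)}$, so by the Schwarz--Pick lemma $f$ is non-expanding for $\hypd$ and its infinitesimal norm satisfies $\|Df(z)\|_{\mathrm{hyp}}\le 1$. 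By Lemma \ref{lem:attractingMinimal}, $(L_i)_{i\in V}$ is attracting exactly when some fixed-length composition contracts $\hypd$ uniformly.

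Then I would show that the failure of (III) forces an honest isometry among the generators. For each $e\in E$ the function $(f,z)\mapsto \|Df(z)\|_{\mathrm{hyp}}$ is continuous on the compact set $\Gamma_e\times L_{i(e)}$ (here compact generation of $S$ is used, and $L_{i(e)}$ sits at positive distance from the boundary of $O_{i(e)}$), so it attains its supremum $c_e\le 1$. If every $c_e<1$, then setting $c=\max_e c_e<1$ and using that the hyperbolic norm of a composition is the product of the norms of its factors along the orbit, every length-$N$ composition contracts $\hypd$ on the relevant $L$'s by the factor $c^N$; combined with non-expansion this yields, for $N$ large, open neighbourhoods $W_i\supset L_i$ and $U_i$ realizing Definition \ref{def:attractingMinimal}, so $(L_i)_{i\in V}$ is attracting, a contradiction. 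Hence some $c_{e^\ast}=1$ is attained at a pair $(f^\ast,z^\ast)$, and by the rigidity part of Schwarz--Pick the map $f^\ast$ restricted to the component $U^\ast$ of $O_{i(e^\ast)}$ containing $z^\ast$ is a covering map, hence a hyperbolic isometry onto its image component.

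Finally, from this isometry I would produce a genuine rotation. Fix $i\in V$ and a point $z_0\in L_i$: by minimality and Lemma \ref{lem:OrbitIsDence}, $L_i=\overline{H_i^i(S)(\{z_0\})}$, so there are $h_k\in H_i^i(S)$ with $h_k(z_0)\to z_0$; let $D$ be the component of $F_i(S)$ containing $z_0$, so that $h_k(D)\subset D$ for large $k$ and, since $h^{\circ n}\in H_i^i(S)$, one has $F_i(S)\subset F(h)$ for every $h\in H_i^i(S)$. The goal is to upgrade this recurrence, via the isometry just found, to an honest $h\in H_i^i(S)$ which restricts to a conformal automorphism of $D$; by Montel's theorem one extracts a locally uniform limit $\psi$ of the $h_k$ which, by Schwarz--Pick rigidity, is an automorphism of $D$, and the classification of invariant Fatou components of the rational map $h$ then forces $D$ to be a Siegel disk or Herman ring on which $h$ is conjugate to an irrational rotation (a rational rotation being impossible since $\deg h\ge 2$). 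This last step---realizing the limiting automorphism by an actual element of the compactly generated semigroup $H_i^i(S)$, i.e.\ promoting a near-isometric return map to a genuine rotation while staying inside the system---is the crux of the argument and the main obstacle; it is exactly where compact generation, the minimality of $(L_i)_{i\in V}$, and the rigidity of holomorphic isometries must be combined.
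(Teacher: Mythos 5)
Your reduction to the Fatou case and the Schwarz--Pick setup on the sets $O_i$ match the paper, and your first step is sound as far as it goes: if $\sup_{\Gamma_e\times L_{i(e)}}\|Df(z)\|_{\mathrm{hyp}}<1$ for every $e\in E$, then forward invariance of $(L_i)_{i\in V}$ lets you multiply the hyperbolic norms along orbits, and the $\epsilon$-neighbourhood argument does produce the open sets of Definition \ref{def:attractingMinimal}. The genuine gap is everything after that, and you have named it yourself: the passage from ``some generator $f^\ast$ attains hyperbolic derivative $1$ at some $z^\ast\in L_{i(e^\ast)}$'' to type (II) is not an argument but a wish. First, $c_{e^\ast}=1$ is perfectly compatible with $(L_i)_{i\in V}$ being attracting: rigidity only makes $f^\ast$ a covering from one component onto a possibly \emph{different} component at one edge, and every return composition through that component can still contract strictly, the other factors supplying the contraction; so your alternative ``all $c_e<1$ or rotation'' is not a dichotomy. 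Second, the Montel step fails as stated: a locally uniform limit $\psi$ of returning maps $h_k$ with $h_k(z_0)\to z_0$ need not be an automorphism --- such recurrent $h_k$ exist even in the attracting case (by Lemma \ref{lem:OrbitIsDence}, $z_0\in L_i=\overline{H_i^i(S)(\{z_0\})}$), and there their limits are nowhere near isometric. Schwarz--Pick rigidity would apply only if you knew the hyperbolic derivatives of the $h_k$ along the orbit of $z_0$ do not decay, which is exactly the statement to be proved, and nothing in your sketch transports the isometry at $(f^\ast,z^\ast)$ to the return words at $z_0$.

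The paper sidesteps this difficulty entirely by proving the contrapositive in the other direction: assume not (I) \emph{and} not (II), and show (III). A pigeonhole argument (take $N$ to be the product of $1+\#V$, $\#V$ and $1+N_0$, where $N_0$ bounds the number of connected components of the $O_i$) shows that every admissible composition of length $N$ contains a sub-composition $g=f_n\circ\dots\circ f_m\in H_i^i(S)$ mapping a single component $A$ of some $O_i$ into itself. Now the classification of holomorphic self-maps of hyperbolic Riemann surfaces (\cite{miln06}, \S 5) does for you what you were trying to rebuild by hand: $g|_A$ is attracting, escaping, of finite order, or an irrational rotation. Escape is impossible because the $g$-orbit of a point of $L_i\cap A$ stays in the compact set $L_i\cap A$ by forward invariance; finite order is impossible since $\deg g\geq 2$; rotation is excluded because $(L_i)_{i\in V}$ is assumed not of type (II). Hence $g$ contracts $\hypd$ on $A$, and together with Schwarz--Pick non-expansion of the remaining factors this verifies condition (ii) of Lemma \ref{lem:attractingMinimal}, so $(L_i)_{i\in V}$ is attracting. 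The lesson: you never need to realize a limiting automorphism inside the semigroup; one honest return map of one component, obtained combinatorially, plus the ready-made classification already yields the trichotomy.
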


\begin{proof}
Suppose that a minimal set $(L_i)_{i \in V}$ is neither of type (I) nor (II), and we show that $(L_i)_{i \in V}$ is of type (III). 
By our assumption, $L_{i}$ is contained in the Fatou set $F_{i}(S)$ at $i$ for each $i \in V.$ 
Let $O_{i}$ be the finite union of the connected components of $F_{i}(S)$ each of which intersects $L_{i}$ and  
denote by $\hypd$  the hyperbolic metric on each connected component of  $O_{i}$  for each $i \in V.$ 

We show that statement (ii) of Lemma \ref{lem:attractingMinimal} holds. 
Take a sufficiently large natural number  $N$, say the product of $1 + \# V,$ $\# V$ and $1 + N_{0}$, 
where $$N_{0} = \max_{i \in V} \{ \text{number of the connected components of } O_{i} \}.$$  
Then, for each admissible word $e=(e_{1}, \dots, e_{N})$ with length $N$ and for each $f_{n} \in \Gamma_{e_{n}} \, (n=1,\dots, N),$ there exist $1 \leq m \leq n \leq N$ and a connected component $A$ of $O_{i}$ for some $i \in V$  such that $i(e_{m}) = t(e_{n}) = i $ and $f_{n} \circ \dots \circ f_{m} (A) \subset A.$ 

Note that 
dynamics of $f_{n} \circ \dots \circ f_{m} $ on $A$ is well understood and classified as in \cite[\S 5]{miln06}. 
Since $L_{i} \cap A \neq \emptyset$ and   $(L_i)_{i \in V}$ is not of type (II),  the map $f_{n} \circ \dots \circ f_{m} $ is attracting so that there exists $c<1$ such that $\hypd (f_{n} \circ \dots \circ f_{m} (x) , f_{n} \circ \dots \circ f_{m}  (y)  ) \leq c \hypd (x, y)$ for each $x$, $y \in A.$ 
Thus,  for each connected component $U$ of $O_{i}$ and for each $x, y \in U,$ we have  
$\hypd( f_{N}\circ \dots \circ f_{1} (x) ,  f_{N}\circ \dots \circ f_{1} (y)   ) \leq c  \hypd (x , y),$ and this completes our proof.   
\end{proof}

\begin{definition}
Let $S= (V, E,(\Gamma_e)_{e \in E})$ be a compactly generated irreducible GDMS and let $\mathbb{L} =(L_i)_{i \in V}$ be a minimal set for $S.$
We say that $\mathbb{L} $ is \textit{J-touching} if $\mathbb{L}$ is of type (I) in Proposition \ref{prop:clsfcationOfMin}. 
We say that $\mathbb{L}$ is \textit{sub-rotative} if $\mathbb{L}$ is of type (II) in Proposition \ref{prop:clsfcationOfMin}. 
\end{definition}

In the following, we present examples which possess a J-touching or a sub-rotative minimal set respectively. 

\begin{example}\label{ex:Jtouching}
This example is due to \cite{bbr}. 
Let $V$ and $E$ be singletons. 
Define $f_{c}(z)= z^{2} +c$ for $c \in \CC$. 
Let $\Gamma = \{ f_{c} \in \poly ; | c | \leq 1/4 \}$. 
Then GDMS $(V, E, \Gamma)$ has a minimal set $\overline{D}=\{z \in \CC; | z | \leq 1/2 \}$ and 
its Julia set $J$ contains the Julia set of iteration of $f_{1/4}$, which implies $J \ni 1/2$. 
Hence, the minimal set $\overline{D}$ is J-touching at $z =1/2$. 
\end{example}

\begin{example}\label{ex:rotating}
Let $V$ and $E$ be singletons. 
Let $f(z)= \nu z + z^{2}$ be a polynomial map which is linearizable at $z=0$, say $\nu = \exp(2\pi i \xi)$ with the golden ratio $\xi$ \cite[\S11]{miln06}.  
Let $g \in \poly$ be a map which has an attracting fixed point at $z=0$, say $g(z) = z^{2}$. 
Define $\Gamma = \{ f, g \}$. 
Then  the GDMS $S = (V, E, \Gamma)$ has a minimal set $\{ 0 \}$, which is contained in the Fatou set. 
Since $f$ does not contract the hyperbolic metric near $z=0$, the minimal set  $\{ 0 \}$ is not attracting, and necessarily is sub-rotative. 
\end{example}

\section{The dichotomy of Markov RDSs}\label{3}
In this section, we discuss the property of mean stable systems and prove Main Results \ref{mr:contractingBall}, \ref{mr:NegLyap} and \ref{mr:dense}. 
We consider mean stable GDMSs $S$ as follows. 

\begin{definition}\label{def:meanstable3}
Let $S= (V, E,(\Gamma_e)_{e \in E})$ be an irreducible GDMS. 
We say that $S$ is \textit{mean stable} if there exist $N \in \nn$ and  two families of non-empty open sets $(U_{i})_{i \in V}$ and $(W_{i})_{i \in V}$ such that 
\begin{enumerate}
\renewcommand{\labelenumi}{(\Roman{enumi})}
\item $U_{i} \subset \overline{U_{i}} \subset W_{i} \subset \overline{W_{i}} \subset F_{i}(S)$ for each $i \in V,$ 
\item for each admissible word $e=(e_{1}, \dots, e_{N})$ with length $N$ and for each $f_{n} \in \Gamma_{e_{n}} \, (n=1,\dots, N)$, we have $\overline{f_{N} \circ \dots  \circ f_{1} (W_{i(e)})} \subset U_{t(e)},$ and   
\item for each $z \in \rs$ and $i \in V$, there exist $j \in V_{}$ and $h \in H_{i}^{j}(S)$ such that $h(z) \in W_{j}.$ 
\end{enumerate}
\end{definition}

\begin{example}\label{ex:MS}
Let $V$ and $E$ be singletons. 
Define $f_{c}(z)= z^{2} +c$ for $c \in \CC$. 
Let $\Gamma = \{ f_{c} \in \poly ; | c | \leq \epsilon \}$ for $\epsilon > 0$. 
Then it is easy to see that the GDMS $(V, E, \Gamma)$ is mean stable for sufficiently small $\epsilon$. 
In general, we can show that this GDMS is mean stable if $\epsilon \neq 1/4$. 
See Remark \ref{rem:BBR}. 
\end{example}

By Definition \ref{def:meanstable3}, 
if an MRDS $\tau$ is mean stable, then 
for every $i \in V$ and for every $z \in \rs$, there exist $j \in V$ and $h \in H_i^j(S_\tau)$ such that $h(z) \notin J_{j}(S_{\tau}).$ 
This property implies some interesting results, see \cite{sw19}. 

We now show some lemmas concerning relation with the mean stability and minimal sets. 

\begin{notation}
Let $X \subset \rat+$ and denote by $\Cpt(X)$ the space of all non-empty compact sets of $X.$ 
We endow $\Cpt(X)$ with the Hausdorff metric. 
\end{notation}

\begin{lemma}\label{lem:forwardInvMS}
Let $S= (V, E,(\Gamma_e)_{e \in E})$ be an irreducible GDMS which is mean stable. 
Then the open sets $(U_{i})_{i \in V}$ and $(W_{i})_{i \in V}$ for $S$ in Definition \ref{def:meanstable3} can be chosen such that the two  are  both forward $S$-invariant. 
\end{lemma}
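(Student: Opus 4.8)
The plan is to enlarge the given sets by saturating them along all admissible orbit segments of length strictly less than $N$, so that a single additional application of any map either stays within such a segment or triggers condition (II) of Definition \ref{def:meanstable3}. Concretely, let $N$, $(U_i)_{i\in V}$ and $(W_i)_{i\in V}$ witness mean stability as in Definition \ref{def:meanstable3}, and define
$$\tilde{W}_i := \bigcup_{k=0}^{N-1}\ \bigcup_{\substack{(e_1,\dots,e_k)\ \mathrm{admissible}\\ t(e_k)=i}}\ \bigcup_{f_n\in\Gamma_{e_n}\,(n=1,\dots,k)} (f_k\circ\dots\circ f_1)(W_{i(e_1)}),$$
where the $k=0$ term is understood to contribute $W_i$ itself; define $\tilde{U}_i$ identically with $U$ in place of $W$. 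Each $\tilde{W}_i$ is a union of images of open sets under non-constant holomorphic maps (which are open maps), hence open, and it contains $W_i$, so it is non-empty; the same holds for $\tilde{U}_i$.

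Next I would verify forward $S$-invariance. Take an edge $e_0\in E$ with $i(e_0)=i$, $t(e_0)=j$, a map $f\in\Gamma_{e_0}$, and a point $x=(f_k\circ\dots\circ f_1)(w)\in\tilde{W}_i$ coming from an admissible word of length $k\le N-1$ terminating at $i$, with $w\in W_{i(e_1)}$. Then $f(x)$ is the image of $w$ under an admissible word of length $k+1\le N$ terminating at $j$. If $k+1\le N-1$, this is again one of the defining pieces of $\tilde{W}_j$. If $k+1=N$, then condition (II) of Definition \ref{def:meanstable3} applies to this length-$N$ word and gives $f(x)\in U_j\subset\tilde{W}_j$. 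Thus $\Gamma_{e_0}(\tilde{W}_i)\subset\tilde{W}_j$, and the identical argument (using $U_{i(e_1)}\subset W_{i(e_1)}$) shows $\tilde{U}_i$ is forward $S$-invariant as well.

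It then remains to confirm that $(\tilde{U}_i)_{i\in V}$ and $(\tilde{W}_i)_{i\in V}$ still satisfy (I)--(III) of Definition \ref{def:meanstable3} with the same $N$. Condition (III) is immediate since $W_j\subset\tilde{W}_j$. For (II), given a length-$N$ admissible word and $x=(g_k\circ\dots\circ g_1)(w)\in\tilde{W}_{i'}$ with $k\le N-1$, I would regroup the total composition so that its first $N$ factors form an admissible length-$N$ word to which (II) applies, landing the point in some $U$; the remaining $k$ factors then keep it in $\tilde{U}$ by the forward invariance just established. The genuine obstacle is the closure control needed for (I) and for the closed inclusion in (II): I must pass from the images of the open sets to the images of their closures and check that nothing spills out. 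Here compact generation is essential. For each fixed word the set $\{(f_1,\dots,f_k): f_n\in\Gamma_{e_n}\}\times\overline{W_{i(e_1)}}$ is compact and the evaluation map is continuous, so $\bigcup_{f_n}(f_k\circ\dots\circ f_1)(\overline{W_{i(e_1)}})$ is compact (hence closed). Since the outer union over words is finite, this lets me interchange closure with the union and bound it using $\overline{U_i}\subset W_i$, $\overline{W_i}\subset F_i(S)$, and the forward $S$-invariance of the Fatou set $(F_i(S))_{i\in V}$ (\cite[Lemma 2.15]{sw19}). This yields $\overline{\tilde{U}_i}\subset\tilde{W}_i$ and $\overline{\tilde{W}_i}\subset F_i(S)$, completing (I), and the analogous compactness argument upgrades the inclusion in (II) to one of closures. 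Hence $(\tilde{U}_i)_{i\in V}$ and $(\tilde{W}_i)_{i\in V}$ are the desired forward $S$-invariant sets.
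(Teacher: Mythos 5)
Your construction and verification coincide with the paper's proof: there, too, one saturates by setting $U_i' = U_i \cup \bigcup \Gamma_{e_\ell}\circ\dots\circ\Gamma_{e_1}(U_{i(e_1)})$ over admissible words of length $1\le\ell\le N-1$ ending at $i$ (likewise $W_i'$), gets forward $S$-invariance by construction, and checks condition (II) by exactly your regrouping of the composition into a leading length-$N$ admissible word followed by the leftover factors. Your compactness argument for the closure inclusions in (I) and (II) merely fills in steps the paper dismisses as trivial, and it is legitimate in context since every application of the lemma there is to a compactly generated GDMS.
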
 

\begin{proof}
Take $N \in \nn$,  $(U_{i})_{i \in V}$ and $(W_{i})_{i \in V}$ as in Definition \ref{def:meanstable3}, which may not be forward $S$-invariant. 
For each $i \in V,$ define $U_i' = U_i \cup \bigcup \Gamma_{e_\ell} \circ \dots \circ \Gamma_{e_1} (U_{i(e_1)})$ where the union runs over all natural numbers $1 \leq \ell \leq N-1$ and all admissible words $(e_1, \dots, e_\ell) $ with length $\ell$ such that $t(e_\ell) = i$. 
Note that there are at most  finitely many numbers of such admissible words. 
Also, define $W_i' = W_i \cup \bigcup \Gamma_{e_\ell} \circ \dots \circ \Gamma_{e_1} (W_{i(e_1)})$ by a similar way. 
By the construction, $(U_{i}')_{i \in V}$ and $(W_{i}')_{i \in V}$ are both forward $S$-invariant. 

We show  $(U_{i}')_{i \in V}$ and $(W_{i}')_{i \in V}$ satisfy the conditions in Definition \ref{def:meanstable3}. 
It is trivial that conditions (I) and  (III) hold. 
We show  $\overline{f_{N} \circ \dots  \circ f_{1} (W_{i(e)}')} \subset U_{t(e)}'$ for each admissible word $e=(e_{1}, \dots, e_{N})$ with length $N$ and each $f_{n} \in \Gamma_{e_{n}} \, (n=1,\dots, N)$.   
Fix $w \in W_{i(e)}'$. 
If $w \in W_{i(e)}$, then  $f_{N} \circ \dots  \circ f_{1} (w) \in U_{t(e)} \subset U_{t(e)}'$. 
If $w \notin W_{i(e)}$, then there exist  admissible word $(\epsilon_1, \dots, \epsilon_\ell) $, $g_j \in \Gamma_{\epsilon_j}$ for each $j=1, \dots, \ell$ and $z \in  W_{i(\epsilon_1)}$ such that $w = g_\ell \circ \dots \circ g_1 (z)$ and $t(\epsilon_\ell) = i$. 
Then  
$f_{N} \circ \dots  \circ f_{1} (w) = f_{N} \circ \dots \circ f_{N-\ell+1}  (f_{N-\ell}\circ \dots \circ f_{1} \circ g_\ell \circ \dots \circ g_1 (z))$. 
 The right hand side belongs to $f_{N} \circ \dots \circ f_{N-\ell+1} (U_{t(e_{N-\ell})})$, and hence to $U_{t(e)}'.$ 
Thus, the condition (II) holds for $(U_{i}')_{i \in V}$ and $(W_{i}')_{i \in V}$, 
and this completes the proof.  
\end{proof}

\begin{lemma}\label{lem:forwardInvAttr}
Let $S= (V, E,(\Gamma_e)_{e \in E})$ be an irreducible GDMS and let $(L_{i})_{i \in V}$ be an attracting minimal set of $S$. 
Then the open sets $(U_{i})_{i \in V}$ and $(W_{i})_{i \in V}$ in Definition \ref{def:attractingMinimal} can be chosen such that the two  are  both forward $S$-invariant. 
\end{lemma}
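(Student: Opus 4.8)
The plan is to reuse the enlargement construction from the proof of Lemma~\ref{lem:forwardInvMS} almost verbatim. Indeed, Definition~\ref{def:attractingMinimal} of an attracting minimal set demands exactly the same two conditions (i), (ii) as conditions (I), (II) of mean stability (Definition~\ref{def:meanstable3}), together with the extra containment $L_i \subset U_i$; it merely omits the surjectivity-type condition (III). So I would start from the $N \in \nn$ and the open sets $(U_i)_{i\in V}$, $(W_i)_{i\in V}$ provided by Definition~\ref{def:attractingMinimal} (which need not be forward $S$-invariant) and manufacture forward-invariant replacements by adjoining the finitely many possible forward images of length less than $N$.

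Concretely, for each $i \in V$ I would set
$$U_i' = U_i \cup \bigcup \Gamma_{e_\ell} \circ \dots \circ \Gamma_{e_1}(U_{i(e_1)}), \qquad W_i' = W_i \cup \bigcup \Gamma_{e_\ell} \circ \dots \circ \Gamma_{e_1}(W_{i(e_1)}),$$
where each union runs over all $1 \le \ell \le N-1$ and all admissible words $(e_1, \dots, e_\ell)$ with $t(e_\ell) = i$. There are finitely many such words, and each summand is open since the maps in $\Gamma_e$ are non-constant holomorphic, hence open; thus $U_i'$ and $W_i'$ are open. Forward $S$-invariance is verified by the same bookkeeping as in Lemma~\ref{lem:forwardInvMS}: applying $\Gamma_e$ (with $i(e)=i$) to a summand of $W_i'$ coming from a word of length $\ell$ either lengthens it to a word of length $\ell+1 \le N-1$, which is again a summand of $W_{t(e)}'$, or, in the boundary case $\ell = N-1$, produces a word of length $N$ whose image lands in $U_{t(e)}$ by condition (ii), hence in $W_{t(e)}'$; the same computation works for $U_i'$.

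It then remains to check that $(U_i')_{i\in V}$ and $(W_i')_{i\in V}$ still satisfy Definition~\ref{def:attractingMinimal}. The one genuinely new point compared with Lemma~\ref{lem:forwardInvMS} is the containment $L_i \subset U_i'$, but this is immediate from $L_i \subset U_i \subset U_i'$; and condition (ii) follows by the identical computation to Lemma~\ref{lem:forwardInvMS}, splitting $w \in W_{i(e)}'$ according to whether $w \in W_{i(e)}$ or $w$ is itself an image of an earlier $W_{i(\epsilon_1)}$ and invoking condition (ii) on the resulting length-$N$ word.

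The step I expect to be the only real obstacle is the chain of closures in condition (i), namely $\overline{U_i'} \subset W_i'$ and $\overline{W_i'} \subset F_i(S)$, since the closure operation need not commute with taking images under an infinite family of maps. Here I would use compactness of $\Gamma_e$ together with compactness of $\overline{U_{i(e_1)}}$ and $\overline{W_{i(e_1)}}$: the set $\Gamma_{e_\ell} \circ \dots \circ \Gamma_{e_1}(\overline{U_{i(e_1)}})$ is a continuous image of a compact set under the evaluation map, hence closed, so
$$\overline{\Gamma_{e_\ell} \circ \dots \circ \Gamma_{e_1}(U_{i(e_1)})} \subset \Gamma_{e_\ell} \circ \dots \circ \Gamma_{e_1}(\overline{U_{i(e_1)}}).$$
Combining this with $\overline{U_{i(e_1)}} \subset W_{i(e_1)}$ yields $\overline{U_i'} \subset W_i'$, and combining the analogous statement for $W$ with $\overline{W_{i(e_1)}} \subset F_{i(e_1)}(S)$ and the forward $S$-invariance of the Fatou family $(F_i(S))_{i\in V}$ yields $\overline{W_i'} \subset F_i(S)$. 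This is precisely where compact generation is used, matching the standing setting for attracting minimal sets employed elsewhere in this section (cf.\ Lemma~\ref{lem:attractingMinimal}).
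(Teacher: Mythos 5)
Your proposal is correct and is exactly the paper's approach: the paper's own proof is the single line ``can be proved by a similar argument as Lemma~\ref{lem:forwardInvMS},'' i.e., precisely the enlargement $U_i' = U_i \cup \bigcup \Gamma_{e_\ell}\circ\dots\circ\Gamma_{e_1}(U_{i(e_1)})$ over admissible words of length $\ell \le N-1$ that you carry out. Your extra care with the closure chain via compactness of the $\Gamma_e$ is a legitimate fleshing-out of what the paper leaves implicit (the lemma is applied only in the compactly generated setting, e.g.\ in Lemma~\ref{lem:prtrbAttrMinSet}), and the rest matches the paper verbatim.
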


\begin{proof}
The statement can be proved by a similar argument as Lemma \ref{lem:forwardInvMS}. 
\end{proof}

\begin{lemma}\label{lem:attrminsetisfinite}
Let $S= (V, E,(\Gamma_e)_{e \in E})$ be an irreducible GDMS. 
Then the number of attracting minimal sets of $S$ is finite. 
More precisely, for each  $j \in V$ and $h \in H_{j}^{j}(S)$, the number of attracting minimal sets of $S$ is at most the number of attracting cycles of $h,$ 
and hence at most $2 \deg (h) -2.$ 
\end{lemma}

\begin{proof}
Fix $j \in V$ and $h \in H_{j}^{j}(S).$ 
For each attracting minimal set $(L_i)_{i \in V}$ of $S$, there exist $N \in \nn$ and  open set $W_{j}$ such that $L_{j} \subset W_{j}$ and $\overline{h^{\circ N} (W_{j})} \subset W_{j},$ where $h^{\circ N}$ denotes $N$-th iterate of $h.$ 
It follows that $h^{\circ N}$ has an attracting periodic point $a$ in $W_{j}$.  
For a point $z \in L_{j},$  the orbit $h^{\circ N \ell}(z)$ accumulates to $a$ as $\ell$ tends to infinity, and hence $a \in L_{j}.$ 
Thus,  the number of attracting minimal sets is at most  the number of attracting cycles of $h.$    
\end{proof}

\begin{lemma}\label{lem:allAttrMinThenMS}
Let $S= (V, E,(\Gamma_e)_{e \in E})$ be an  irreducible GDMS. 
If all  minimal sets of $S$ are attracting, then $S$ is mean stable. 
\end{lemma}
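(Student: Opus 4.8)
The conditions (I) and (II) in the definition of mean stability (Definition \ref{def:meanstable3}) are formally identical to conditions (i) and (ii) defining an attracting minimal set (Definition \ref{def:attractingMinimal}), so the entire content of the lemma lies in producing a single pair of neighborhood systems that simultaneously dominates every attracting minimal set, together with the global absorption condition (III). The plan is therefore in two stages: first assemble the neighborhoods by a finite union, then establish (III) by a minimality argument. Note first that minimal sets of $S$ exist (e.g.\ inside any orbit closure, as constructed below), so by the hypothesis there is at least one attracting minimal set.

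For the first stage I would invoke Lemma \ref{lem:attrminsetisfinite} to list the attracting minimal sets as $\mathbb{L}^{(1)}, \dots, \mathbb{L}^{(k)}$ with $\mathbb{L}^{(s)} = (L_i^{(s)})_{i \in V}$ and $k \geq 1$. For each $s$, Lemma \ref{lem:forwardInvAttr} supplies forward $S$-invariant open sets $(U_i^{(s)})_{i \in V}$, $(W_i^{(s)})_{i \in V}$ and an integer $N_s$ realizing (i)--(ii). Setting $N := \prod_s N_s$, I would observe that since $W_i^{(s)}$ is forward invariant, any admissible word of length $N$ factors into $N/N_s$ blocks of length $N_s$, and iterating (ii) block by block gives $\overline{f_N \circ \dots \circ f_1 (W_{i(e)}^{(s)})} \subset U_{t(e)}^{(s)}$, so the single exponent $N$ works for every $s$. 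Then I would put $U_i := \bigcup_s U_i^{(s)}$ and $W_i := \bigcup_s W_i^{(s)}$. These are non-empty and open, and since the unions are finite one has $\overline{U_i} = \bigcup_s \overline{U_i^{(s)}}$ and $\overline{W_i} = \bigcup_s \overline{W_i^{(s)}}$, which yields (I); condition (II) follows by distributing the composition over the finite union and applying the per-$s$ estimate just obtained.

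The hard part will be condition (III): from an arbitrary $(z, i_0) \in \rs \times V$ one must reach some $W_j$ along an admissible word. I would argue by contradiction, assuming there is $(z_0, i_0)$ whose forward orbit misses every $W_j$, i.e.\ $h(z_0) \notin W_j$ for all $j \in V$ and all $h \in H_{i_0}^j(S)$. Consider the family $K_i := \overline{H_{i_0}^i(S)(\{z_0\})}$. It is non-empty for each $i$ by irreducibility (strong connectivity of $(V,E)$), compact as a closed subset of $\rs$, and forward $S$-invariant: for $f \in \Gamma_e$ one has $f(K_{i(e)}) \subset \overline{f(H_{i_0}^{i(e)}(S)(\{z_0\}))} \subset K_{t(e)}$, because appending the edge $e$ to an admissible word ending at $i(e)$ produces an admissible word ending at $t(e)$. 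A standard Zorn's lemma argument, using that a nested intersection of non-empty compact forward $S$-invariant families is again of that kind, then produces a minimal set $(L_i)_{i \in V} \subset (K_i)_{i \in V}$.

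By hypothesis this minimal set is attracting, hence coincides with some $\mathbb{L}^{(s)}$, so $L_i \subset U_i^{(s)} \subset W_i$ for all $i$. Choosing any $p \in L_{i_1}$, the inclusion $p \in K_{i_1} = \overline{H_{i_0}^{i_1}(S)(\{z_0\})}$ exhibits $p$ as a limit of points $h_n(z_0)$ with $h_n \in H_{i_0}^{i_1}(S)$; since $p \in L_{i_1} \subset W_{i_1}$ and $W_{i_1}$ is open, $h_n(z_0) \in W_{i_1}$ for large $n$, contradicting the assumption. Hence (III) holds, and $(U_i)_{i \in V}$, $(W_i)_{i \in V}$, $N$ witness the mean stability of $S$. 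The essential obstacle is precisely this absorption step, where the hypothesis that every minimal set is attracting is converted, via a minimal set sitting inside the orbit closure, into the statement that every orbit eventually enters the attracting neighborhoods.
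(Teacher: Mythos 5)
Your proposal is correct and follows essentially the same route as the paper's proof: finiteness of the attracting minimal sets via Lemma \ref{lem:attrminsetisfinite}, taking $N$ as a product and $(U_i)_{i\in V}$, $(W_i)_{i\in V}$ as finite unions for conditions (I) and (II), and establishing (III) by extracting, via Zorn's lemma, a minimal set inside the orbit closure $K_j = \overline{H_{i}^{j}(S)(\{z\})}$, which by hypothesis is attracting and hence lies in $W_j$, forcing some orbit point into $W_j$. Your write-up merely makes explicit (the block-iteration yielding a common $N$, and the limit-point argument for absorption) what the paper leaves as ``easy to see.''
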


\begin{proof}
Suppose that all  minimal sets of $S$ are attracting. 
By Lemma \ref{lem:attrminsetisfinite},  the number of attracting minimal sets is finite. 
For each attracting minimal set, take a natural number and two kinds of open sets as in Definition \ref{def:attractingMinimal}.  
Define $N$ as the product of these natural numbers and define $(U_{i})_{i \in V}$ and $(W_{i})_{i \in V}$ as the union of these open sets respectively. 
Then it is easy to see that conditions (I) and (II) of Definition \ref{def:meanstable3} holds. 
We show that condition (III) also holds. 
For each $z \in \rs$ and $i \in V,$ define $K_{j}=  \overline{H_{i}^{j} (S)(\{z\})}$ for each $j \in V$.
Then $(K_{j})_{j \in V}$ is forward $S$-invariant, and it follows from Zorn's lemma that  there exists a minimal set $(L_{j})_{j \in V} $ such that $(L_{j})_{j \in V}  \subset (K_{j})_{j\in V} .$   
The minimal set $(L_{j})_{j \in V}$ is attracting by our assumption, thus there exists $h \in H_{i}^{j}(S)$ such that $h(z) \in W_{j}.$ 
This completes our proof.  
\end{proof}

\begin{lemma}\label{lem:ifMSthenminimalAreAttr}
Let $S= (V, E,(\Gamma_e)_{e \in E})$ be an  irreducible GDMS. 
If  $S$ is mean stable,  then all  minimal sets of $S$ are attracting. 
\end{lemma}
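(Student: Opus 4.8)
The plan is to verify Definition \ref{def:attractingMinimal} for an arbitrary minimal set $(L_i)_{i \in V}$ of a mean stable $S$ \emph{directly}, by recycling the open sets that witness mean stability after a harmless enlargement. First I would apply Lemma \ref{lem:forwardInvMS} to fix data $N \in \nn$ and families $(U_i)_{i \in V}$, $(W_i)_{i \in V}$ witnessing Definition \ref{def:meanstable3}, with both families forward $S$-invariant. The first task is to trap the minimal set inside these sets. Pick any $z_0 \in L_k$; condition (III) yields $j \in V$ and $h \in H_k^j(S)$ with $h(z_0) \in W_j$, and since $(L_i)_{i \in V}$ is forward $S$-invariant we get $h(z_0) \in L_j \cap W_j$. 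Applying one admissible word of length $N$ starting at $j$ (which exists by irreducibility) and using condition (II) together with forward invariance of $(L_i)_{i \in V}$ produces a point $w_0 \in L_t \cap U_t$ for some $t \in V$.

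Next I would propagate this containment to every vertex. By Lemma \ref{lem:OrbitIsDence}, $L_i = \overline{H_t^i(S)(\{w_0\})}$ for each $i \in V$. Because $w_0 \in U_t$ and $(U_i)_{i \in V}$ is forward $S$-invariant, every image $\phi(w_0)$ with $\phi \in H_t^i(S)$ lies in $U_i$; hence $H_t^i(S)(\{w_0\}) \subset U_i$ and therefore $L_i \subset \overline{U_i} \subset W_i \subset F_i(S)$ for all $i \in V$. In particular $(L_i)_{i \in V} \subset (F_i(S))_{i \in V}$, so case (I) of Proposition \ref{prop:clsfcationOfMin} is already excluded.

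Finally I would check the attracting property. Set $U_i' := W_i$, which by the previous paragraph is an open set containing $L_i$. Condition (II) gives $\overline{f_N \circ \dots \circ f_1(W_{i(e)})} \subset U_{t(e)} \Subset W_{t(e)}$ for every admissible word $e$ of length $N$ and every choice $f_n \in \Gamma_{e_n}$, so there is a positive gap between each such image and $\partial W_{t(e)}$. Since each $\Gamma_e$ is compact and $E$ is finite, the family of all length-$N$ compositions is compact in $C(\rs, \rs)$, hence equicontinuous with a uniform modulus of continuity. This lets me enlarge each $W_i$ to a slightly larger open set $W_i'$ with $\overline{W_i} \subset W_i'$ and $\overline{W_i'} \subset F_i(S)$, chosen uniformly small enough that $\overline{f_N \circ \dots \circ f_1(W_{i(e)}')} \subset W_{t(e)} = U_{t(e)}'$ continues to hold for all these compositions. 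Then $\bigl(N, (U_i')_{i\in V}, (W_i')_{i\in V}\bigr)$ witnesses that $(L_i)_{i \in V}$ is attracting.

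The step I expect to be the main obstacle is this last uniform enlargement: one must guarantee that a single choice of $W_i'$ works simultaneously for the infinitely many length-$N$ words, which is precisely where compactness of the generators $\Gamma_e$ (not merely their boundedness) is essential, while keeping $\overline{W_i'} \subset F_i(S)$. An alternative that avoids the enlargement is to combine the containment $L_i \subset \overline{U_i} \subset F_i(S)$ with Proposition \ref{prop:clsfcationOfMin} and rule out the sub-rotative case (II) by hand; there the difficulty is genuine, since an irrational rotation on a Siegel disk or Herman ring is a hyperbolic isometry, and one must reconcile the non-contraction on the whole rotation component with the strict Schwarz--Pick contraction that condition (II) forces only on the smaller trapping sets $W_i$. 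I therefore favor the direct construction above, which never needs to analyze the rotation domain.
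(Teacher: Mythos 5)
Your proof is correct, and its skeleton up to the key containment coincides with the paper's: the paper likewise invokes Lemma \ref{lem:forwardInvMS} to make $(U_i)_{i \in V}$ and $(W_i)_{i \in V}$ forward $S$-invariant, traps a point of the minimal set via condition (III), and uses Lemma \ref{lem:OrbitIsDence} plus forward invariance of $(U_i)_{i \in V}$ to get $L_i \subset \overline{U_i} \subset W_i$. (You are in fact more careful at the trapping step: condition (III) only places $h(z_0)$ in $W_j$, and your extra length-$N$ word pushing the point into $U_t$ supplies a detail the paper glosses over when it writes $h(z) \in U_j$.) The genuine divergence is the finish. The paper never modifies the witness sets: it applies the self-similarity Lemma \ref{lem:selfsimilar} --- writing $L_i$ as the union of images of the sets $L_{i(e_1)}$ under all admissible length-$N$ compositions --- together with $L_{i(e_1)} \subset W_{i(e_1)}$ and condition (II) to upgrade $L_i \subset \overline{U_i}$ to the \emph{open} containment $L_i \subset U_i$, after which the original triple $\bigl(N, (U_i)_{i\in V}, (W_i)_{i\in V}\bigr)$ itself witnesses Definition \ref{def:attractingMinimal}. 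You instead keep only $L_i \subset W_i$ and shift the witnesses, setting $U_i' = W_i$ and thickening to $W_i'$ via uniform equicontinuity. That step is sound: joint continuity of composition in the sup metric makes the set of length-$N$ compositions a continuous image of the compact set $\bigcup_e \Gamma_{e_1} \times \dots \times \Gamma_{e_N}$, hence compact in $C(\rs, \rs)$ and uniformly equicontinuous by Arzel\`a--Ascoli, and the positive gap $\min_i \mathrm{dist}\bigl(\overline{U_i}, \rs \setminus W_i\bigr) > 0$ over the finitely many vertices lets a single $\delta$-thickening work for all words at once. What the paper's route buys is brevity --- no equicontinuity machinery and no enlargement --- at the price of routing through Lemma \ref{lem:selfsimilar}; what yours buys is making explicit exactly where compact generation enters, a hypothesis the lemma's statement omits but which both arguments in fact use (Lemma \ref{lem:selfsimilar} is stated only for compactly generated systems, and in all applications $S = S_\tau$ with $\tau \in \MRDS(X)$ has compact $\Gamma_e$). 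Your closing instinct is also right: avoiding Proposition \ref{prop:clsfcationOfMin}, and with it any hand analysis of the sub-rotative case, is precisely what the paper's proof does as well.
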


\begin{proof}
For a mean stable $S,$ take $N \in \nn$, $(U_{i})_{i \in V}$ and $(W_{i})_{i \in V}$  as in   Definition \ref{def:meanstable3}. 
We may assume that both  $(U_{i})_{i \in V}$ and $(W_{i})_{i \in V}$ are  forward $S$-invariant by Lemma \ref{lem:forwardInvMS}. 
Pick any minimal set $(L_i)_{i \in V}$. It suffices  to show $L_i \subset U_i$ for each $i \in V$. 
Fix $k \in V$ and $z \in L_k,$ then there exist $j \in V$ and $h \in H_k^j(S)$ such that $h(z) \in U_j.$ 
Since  $(L_i)_{i \in V}$ is forward invariant, we have  $z_0 = h(z) \in L_j$,  and hence $L_i = \overline{H_{j}^{i}(S) (\{z_0\})}$ for each $i \in V$ by Lemma \ref{lem:OrbitIsDence}. 
By Lemma \ref{lem:selfsimilar} and conditions (I) and (II) in Definition \ref{def:meanstable3} for $(U_{i})_{i \in V}$ and $(W_{i})_{i \in V}$,  
we can show $L_i \subset U_i$ for each $i \in V$. 
\end{proof}

By Lemmas \ref{lem:allAttrMinThenMS} and \ref{lem:ifMSthenminimalAreAttr}, we have the following corollary. 

\begin{cor}\label{cor:iff}
Let $S= (V, E,(\Gamma_e)_{e \in E})$ be an  irreducible GDMS. 
Then $S$ is mean stable if and only if all  minimal sets of $S$ are attracting. 
\end{cor}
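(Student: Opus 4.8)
The plan is to prove Corollary~\ref{cor:iff} simply by combining the two implications already established. The statement is an ``if and only if,'' and each direction is precisely one of the preceding lemmas, so the proof is essentially a citation of Lemmas~\ref{lem:allAttrMinThenMS} and \ref{lem:ifMSthenminimalAreAttr}. First I would record the forward direction: if $S$ is mean stable, then by Lemma~\ref{lem:ifMSthenminimalAreAttr} all minimal sets of $S$ are attracting. Then I would record the converse: if all minimal sets of $S$ are attracting, then by Lemma~\ref{lem:allAttrMinThenMS} the system $S$ is mean stable. Together these two give the equivalence.

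Since both implications are already in hand, there is no genuine mathematical obstacle here; the only thing to verify is that the two lemmas together cover exactly the two directions of the biconditional, with no hidden gap. In particular I would confirm that both lemmas are stated for an arbitrary irreducible GDMS $S$ (they are), so that no extra hypothesis such as compact generation sneaks in on one side but not the other. One subtle point worth checking is that Lemma~\ref{lem:ifMSthenminimalAreAttr} guarantees that \emph{every} minimal set is attracting, not merely that some minimal set is attracting; inspecting its proof, which picks an \emph{arbitrary} minimal set $(L_i)_{i \in V}$ and shows $L_i \subset U_i$, confirms this. Likewise Lemma~\ref{lem:allAttrMinThenMS} requires \emph{all} minimal sets to be attracting, matching the phrasing exactly.

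Thus the proof is a two-sentence assembly. Concretely, I would write:

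\begin{proof}
If $S$ is mean stable, then all minimal sets of $S$ are attracting by Lemma~\ref{lem:ifMSthenminimalAreAttr}. Conversely, if all minimal sets of $S$ are attracting, then $S$ is mean stable by Lemma~\ref{lem:allAttrMinThenMS}. This proves the equivalence.
\end{proof}

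The only ``hard part'' has effectively been outsourced to the earlier lemmas: the substantive content lives in Lemma~\ref{lem:ifMSthenminimalAreAttr}, whose proof uses the forward-invariant choice of $(U_i)_{i\in V}$, $(W_i)_{i\in V}$ from Lemma~\ref{lem:forwardInvMS} together with the self-similarity relation of Lemma~\ref{lem:selfsimilar} and the density of orbits in Lemma~\ref{lem:OrbitIsDence} to trap each minimal set inside the $U_i$, and in Lemma~\ref{lem:allAttrMinThenMS}, whose proof combines the finitely many attracting minimal sets (finiteness from Lemma~\ref{lem:attrminsetisfinite}) and invokes Zorn's lemma to verify condition~(III). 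Given those, the corollary itself demands no new ideas, only the observation that the two directions are mutually inverse.
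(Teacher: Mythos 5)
Your proposal is correct and matches the paper exactly: the paper derives Corollary~\ref{cor:iff} by simply combining Lemma~\ref{lem:ifMSthenminimalAreAttr} (mean stable implies all minimal sets attracting) with Lemma~\ref{lem:allAttrMinThenMS} (all minimal sets attracting implies mean stable). Your additional checks on hypotheses and quantifiers are sound but not needed beyond the two citations.
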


By the corollary above, we can construct MRDSs which are not mean stable by using Examples \ref{ex:Jtouching} and \ref{ex:rotating}.

We now show that if $\tau$ is mean stable, then for every initial value,  sample-wise dynamics is contractive and the Lyapunov exponent is negative almost surely 
with respect to the natural measure $\tilde{\tau}$ associated with $\tau$.  
We first define the measure $\tilde{\tau}.$

\begin{definition}\label{titil_def}
For  ${\tau} \in \MRDS(\rat+)$, we set $p_{ij}=\tau_{ij}(\rat+)$ and set $P = (p_{ij})_{i,j \in V}$. 
Since $\tau$ is irreducible, there exists a unique vector $p=(p_1,\dots,  p_m)$  such that $pP=p$, $\sum_{i \in V} p_i =1$ and $p_i > 0$ for all $ i\in V$.

We define the Borel probability measure $\tilde{\tau}$ on $(\rat+ \times E)^{\nn}$ as follows. 
For each $i \in V,$ let $\tilde{\tau}_{i}$ be the Borel probability measure on $(\rat+ \times E)^{\nn}$ such that 
for any $N \in \nn$, for $N$ Borel sets $A_n \,(n=1,\dots,N)$ of  $\rat+$ and for  $(e_1, \dots , e_N) \in E^N$,
\begin{align*}
 &\tilde{\tau}_i \left( A'_1 \times \cdots \times A'_N \times \prod_{N+1}^{\infty} (\rat+ \times E) \right) \\
 =& \begin{cases}
     \tau_{e_1}(A_1) \cdots \tau_{e_N}(A_N), & \text{if } (e_1, \dots , e_N) \text{ is admissible and } i(e_{1})=i\\
     0, & \text{otherwise,}
   \end{cases}
\end{align*}
where $A'_n = A_n \times \{ e_n \} \,(n=1,\dots,N)$.
Then we define $\tilde{\tau}$ as the sum $\sum_{i \in V} p_{i} \tilde{\tau}_{i} $.
\end{definition}

We now prove Main Results \ref{mr:contractingBall} and \ref{mr:NegLyap}.  

\begin{theorem}[Main Result \ref{mr:contractingBall}]\label{th:ContraBall}
Let ${\tau} \in \MRDS(\rat+)$ be a mean stable system. 
Then we have all of the following. 
\begin{enumerate}
\item  There exists a constant $c \in (0,\, 1)$ satisfying that for each $z  \in \rs$, there exists a Borel subset $\mathfrak{F}$ of $(\rat+ \times E)^{\nn}$ with  $\tilde{\tau} (\mathfrak{F}) =1 $ 
 such that for every $(f_{n}, e_{n})_{n =1}^{\infty} \in \mathfrak{F}$, there exist $r= r(z, (f_{n}, e_{n})_{n =1}^{\infty}) >0$ and $K=K(z, (f_{n}, e_{n})_{n =1}^{\infty}) >0$ such that  $$ \diam  f_{n} \circ \dots  \circ f_{1} (B(z, r)) \leq K c^{n}$$ for every $n \in\nn.$   
\item  For each $z  \in \rs$, there exists a Borel subset $\mathfrak{F}$ of $(\rat+ \times E)^{\nn}$ with  $\tilde{\tau} (\mathfrak{F}) =1 $ 
 such that for every $(f_{n}, e_{n})_{n =1}^{\infty} \in \mathfrak{F}$, there exists an attracting minimal set $(L_{i})_{i \in V}$  such that  $d(f_{n}\circ \dots \circ f_1 (z), L_{t(e_{n})}) \to 0$ as $n \to \infty$. 
\item For every $i \in V$, the kernel Julia set 
$J_{\ker,i} (S_\tau) :=  \bigcap_{ j \in V} \bigcap_{h \in H_i^j(S) } h^{-1} (J_j(S))$ at $i$ is empty. 
\item There exists a Borel subset $\mathfrak{G}$ of $(\rat+ \times E)^{\nn}$ with  $\tilde{\tau} (\mathfrak{G}) =1 $ such that for every 
$\xi = (f_{n}, e_{n})_{n =1}^{\infty} \in \mathfrak{G}$, 
the ($2$-dimensional) Lebesgue measure of the Julia set $J_{\xi}$ is zero.   
Here, the Julia set $J_{\xi}$  of $\xi$  is the complement of  the set of  all points $z \in \rs$ for which there exists a neighborhood $U$ of $z$ in $\rs$ such that  the family $\{f_{n} \circ \dots  \circ f_{1}\}_{n=1}^{\infty}$ is  equicontinuous on $U$.  
\item There exist at most finitely many minimal sets of $S_{\tau}$. Moreover, 
each minimal set of $S_{\tau}$ is attracting. 
Also, for every minimal set $\mathbb{L} = (L_{i})_{i \in V}$, 
we define the function $\mathbb{T}_{\mathbb{L}} \colon \rs \times V \to [0, 1]$ by
$$\mathbb{T}_{\mathbb{L}}(z,i) := \tilde{\tau}_i (\{ \xi = (f_n , e_n )_{n \in \nn}  ; \, d(f_{n}\circ \dots \circ f_1 (z), L_{t(e_{n})}) \to 0 \, (n \to \infty) \} )$$ 
for every point  $(z,i) \in \rs \times V$. 
Then $\mathbb{T}_{\mathbb{L}}$ is continuous on $\rs \times V$.  
Also, the averaged function ${T}_{\mathbb{L}}(z) := \sum_{i \in V} p_{i} \mathbb{T}_{\mathbb{L}}(z,i)$ is continuous on $\rs,$ 
where  $(p_1,\dots,  p_m)$ is the probability vector in Definition \ref{titil_def}. 
\item Suppose $\tau$ has exactly $\ell$ minimal sets  $\mathbb{L}_{1}, \mathbb{L}_{2}, \dots, \mathbb{L}_{\ell}$. 
Then $${T}_{\mathbb{L}_{1}}(z) + {T}_{\mathbb{L}_{2}}(z) + \dots + {T}_{\mathbb{L}_{\ell}}(z) =1$$
for every $z \in \rs$. 
\end{enumerate}
\end{theorem}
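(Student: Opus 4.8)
The plan is to build everything on Corollary \ref{cor:iff}, which identifies mean stability with the property that every minimal set is attracting, together with the uniform hyperbolic contraction recorded in Lemma \ref{lem:attractingMinimal}. Before the main work, I note that several assertions are essentially free. Statement (3) is a restatement of condition (III) of Definition \ref{def:meanstable3}, because $W_j \subset F_j(S_\tau)$ forces the point $h(z)$ to lie off the Julia set. The first two claims of (5)---that there are finitely many minimal sets and that each is attracting---follow at once from Corollary \ref{cor:iff} together with the finiteness in Lemma \ref{lem:attrminsetisfinite}. Thus the real content is the contraction estimate (1), the convergence (2), the null-set statement (4), the partition identity (6), and the continuity in (5).

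\textbf{Entry and contraction.} Fix, via Lemma \ref{lem:forwardInvMS}, the integer $N$ and forward $S_\tau$-invariant open families $(U_i)$, $(W_i)$ from Definition \ref{def:meanstable3}. The first key step is a uniform entry estimate: there are $M \in \nn$ and $\delta > 0$ so that from every $(z,i) \in \rs \times V$ the $\tilde\tau_i$-probability of the random orbit meeting $\bigcup_j W_j$ within $M$ steps is at least $\delta$. To obtain it I would use condition (III) to get, for each $(z,i)$, a finite admissible composition $h$ with $h(z) \in W_j$; since the factors lie in $\Gamma_e = \supp\tau_e$ and $W_j$ is open, continuity produces a positive-measure cylinder sending a whole neighborhood of $z$ into $W_j$, and compactness of $\rs$ upgrades this to uniform $M$ and $\delta$. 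A conditional Borel--Cantelli argument over consecutive blocks of length $M$ then gives $\tilde\tau_i$-almost sure entry in finite time, and forward invariance keeps the orbit inside $\bigcup_i W_i$ afterwards. Lemma \ref{lem:attractingMinimal}(iii) contracts the hyperbolic metric on the relevant Fatou components by a uniform factor $c' \in (0,1)$ per block of $N$ steps; comparing hyperbolic and spherical distances on the compact sets $\overline{W_i} \subset F_i(S_\tau)$ and putting $c = (c')^{1/N}$ yields (1). Because the images of a small ball shrink to a point while staying in $\bigcup_i \overline{U_i}$, and because forward invariance of a minimal set $\mathbb{L}$ together with the contraction gives $\hypd\bigl(f_n\circ\dots\circ f_1(z),\, f_n\circ\dots\circ f_1(y)\bigr) \le (c')^{\lfloor n/N\rfloor}\hypd(z,y) \to 0$ with $f_n\circ\dots\circ f_1(y) \in L_{t(e_n)}$ for any $y \in L_i$, the orbit converges to an attracting minimal set, which is (2). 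I expect the uniformity of $M$ and $\delta$ to be the main obstacle in this step.

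\textbf{Consequences (4) and (6).} For (4) I would apply Fubini to $\Leb \times \tilde\tau$: statement (1) says that for each fixed $z$ the ball $B(z,r)$ contracts for $\tilde\tau$-almost every $\xi$, so for $\tilde\tau$-almost every $\xi$ one has $z \notin J_\xi$ for $\Leb$-almost every $z$, giving $\Leb(J_\xi)=0$. For (6) I first observe that two distinct minimal sets are disjoint, since if they met at a vertex they would coincide by Lemma \ref{lem:OrbitIsDence}; hence the convergence events for different $\mathbb{L}_k$ are pairwise disjoint, and by (2) their union carries full $\tilde\tau_i$-measure. Therefore $\sum_k \mathbb{T}_{\mathbb{L}_k}(z,i)=1$, and averaging over $i$ with the weights $p_i$ of Definition \ref{titil_def} gives $\sum_k T_{\mathbb{L}_k}(z)=1$.

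\textbf{Continuity in (5), the hard part.} Regrouping the attracting data of $\mathbb{L}_1,\dots,\mathbb{L}_\ell$, I would arrange the families so that $W_i = \bigsqcup_k W_i^{(k)}$ with the $W_i^{(k)}$ the disjoint basin pieces of $\mathbb{L}_k$; then converging to $\mathbb{L}_k$ is the same as eventually entering $\bigcup_j W_j^{(k)}$. Writing $\mathbb{T}_{\mathbb{L}_k}$ as the increasing limit of the finite-time capture probabilities $P_n^{(k)}(z,i)$, the uniform entry estimate forces $P_n^{(k)} \to \mathbb{T}_{\mathbb{L}_k}$ uniformly in $(z,i)$, so it is enough to prove each $P_n^{(k)}$ continuous. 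This is the delicate point, since the capture event is cut out by the \emph{open} conditions $f_m\circ\dots\circ f_1(z) \in W^{(k)}$; the resolution is to show that the orbit grazes the boundaries $\partial W_j^{(k)}$ only on a $\tilde\tau$-null set, exploiting the gap $\overline{U_j^{(k)}} \subset W_j^{(k)}$ from condition (II) to separate captured from uncaptured trajectories, exactly as in the i.i.d.\ treatment of \cite{sumi13, sumi11}. I expect this boundary-negligibility argument to be the principal technical burden. Continuity of the averaged $T_{\mathbb{L}}$ then follows immediately from continuity of each $\mathbb{T}_{\mathbb{L}}(\cdot,i)$.
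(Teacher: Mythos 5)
Your treatment of (1)--(4) and (6) is essentially the paper's, except that you re-derive the probabilistic input from scratch: the paper simply cites \cite[Lemma 3.10]{sw19} for the almost-sure entry of the orbit into $(W_i)_{i\in V}$, whereas you reconstruct that lemma via a uniform entry estimate (uniform $M,\delta$ from condition (III) plus compactness of $\rs$) and a block Borel--Cantelli argument; that reconstruction is sound and is indeed how such lemmas are proved. Two small remarks: the per-block contraction on $(W_i)_{i\in V}$ comes directly from condition (II) of Definition \ref{def:meanstable3} (compact inclusion $\overline{f_N\circ\dots\circ f_1(W_{i(e)})}\subset U_{t(e)}\subset\subset W_{t(e)}\subset\subset F_{t(e)}$, Schwarz--Pick, and compactness of the generating families), not from Lemma \ref{lem:attractingMinimal}, which concerns the components $O_i$ meeting an attracting minimal set --- a misattribution, not a gap. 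For (4), your Fubini argument over $\Leb\times\tilde{\tau}$ is a legitimate alternative to the paper's citation of \cite[Proposition 3.11]{sw19} (it needs only a routine measurability check for $\{(z,\xi):z\in J_\xi\}$), and (6) matches the paper's ``direct consequence of (ii).''

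The genuine problem is in (5). The paper proves continuity of $\mathbb{T}_{\mathbb{L}}$ by a completely different mechanism: it takes a continuous $\phi$ with $\phi=1$ on $L_i$ and $\phi=0$ off $O_i$, shows $M_\tau^n\phi\to\mathbb{T}_{\mathbb{L}}$ pointwise, obtains equicontinuity of $\{M_\tau^n\phi\}_n$ from the emptiness of the kernel Julia set (via Proposition 3.11 and Lemmas 2.38--2.39 of \cite{sw19}), and concludes by Arzel\`a--Ascoli. Your route --- uniform convergence of finite-time capture probabilities $P_n^{(k)}$ plus continuity of each $P_n^{(k)}$ --- has a correct first half (the uniform entry estimate does give $\sup_{(z,i)}|\mathbb{T}_{\mathbb{L}_k}(z,i)-P_n^{(k)}(z,i)|\le(1-\delta)^{\lfloor n/M\rfloor}$), but the boundary-negligibility claim is false as stated for a \emph{fixed} family $(W_i^{(k)})$: the measures $\tau_e$ may have atoms (e.g.\ $m=1$, $\tau_{11}=\delta_f$ with $f$ hyperbolic is mean stable), and then for any fixed $W$ there exist initial points $z$ whose orbit hits $\partial W$ with probability $1$, so some $P_n^{(k)}(\cdot,i)$ is genuinely discontinuous at such $z$. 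The gap $\overline{U_j^{(k)}}\subset W_j^{(k)}$ does not by itself repair this. A repair is possible but is exactly the missing idea: embed $W$ in a strictly nested one-parameter family $W(s)$, note that for each fixed $z_0$ and each time $m$ the events of hitting $\partial W(s)_{t(e_m)}$ at time $m$ are pairwise disjoint in $s$, so only countably many parameters $s$ are bad for $z_0$; choose a good $s$ for each $z_0$, check that the uniform entry estimate persists for $W(s)$, and conclude continuity of $\mathbb{T}_{\mathbb{L}_k}$ at $z_0$ as a uniform limit of functions continuous at $z_0$. Without this pointwise-in-$z_0$, parameter-selection device (or the paper's operator/equicontinuity argument), your proof of (5) does not go through.
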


\begin{proof}
The key to the proof of (i) and (ii) is  \cite[Lemma 3.10]{sw19}. 
For the mean stable $S_{\tau}$, fix a family of open sets $(W_{i})_{i \in V}$ and $N$ as in Definition \ref{def:meanstable3} and let $z  \in \rs.$ 
Then, by  \cite[Lemma 3.10]{sw19},  
$$\mathfrak{E} =  \{ (f_{n}, e_{n})_{n =1}^{\infty} \in (\rat+ \times E)^{\nn} \, ;  f_{n} \circ \dots  \circ f_{1} (z) \notin W_{t(e_{n})} \text{ for any } n\in \nn \} $$   
satisfies $\tilde{\tau}(\mathfrak{E}) =0,$ and we define $\mathfrak{F}$ as the complement of $\mathfrak{E}.$

For every $(f_{n}, e_{n})_{n =1}^{\infty} \in \mathfrak{F},$ there exists  $k \in \nn$ such that  $f_{k} \circ \dots  \circ f_{1} (z) \in W_{t(e_{k})}.$ 
Take $r= r(z, (f_{n}, e_{n})_{n =1}^{\infty}) >0$ such that $f_{k} \circ \dots  \circ f_{1} (B(z, r)) \subset W_{t(e_{k})}.$ 
Since dynamics on $(W_{i})_{i \in V}$ is uniformly contractive  with respect to the hyperbolic metric, there exists $c \in (0,\, 1)$, 
which does not depend on $z$, such that 
$$ \diam_{\mathrm{hyp}}  f_{\ell N +k} \circ \dots  \circ f_{1} (B(z, r)) \leq c^{\ell} \diam_{\mathrm{hyp}}  f_{k} \circ \dots  \circ f_{1} (B(z, r))$$ 
for every $\ell \in\nn.$ 
This completes the proof of (i) since hyperbolic metric and spherical metric are comparable on each compact set. 
By the similar method, we can show that the statement (ii) is also true. 

The statement (iii) is trivial by definition of the kernel Julia set and the mean stability, 
and it follows from \cite[Proposition 3.11]{sw19} that 
the $2$-dimensional Lebesgue measure of the Julia set $J_{\xi}$ is zero for $\tilde{\tau}$-almost every $\xi$.   
This completes the proof of (iv). 

We next show the statement (v) following \cite[Proposition 4.24]{sw19}. 
By Corollary \ref{cor:iff}, each minimal set of $S_{\tau}$ is attracting. 
By Lemma \ref{lem:attrminsetisfinite}, it follows that there exist at most finitely many minimal sets of $S_{\tau}$. 
For an attracting minimal set $\mathbb{L} = (L_{i})_{i \in V}$, for each $i \in V$ 
we let $O_{i}$ be the finite union of the connected components of $F_{i}(S_{\tau})$ each of which intersects $L_{i}.$ 
Then there exists a continuous function $\phi \colon \rs \times V \to [0, 1]$ such that   
$\phi(z, i) = 1$ for every $z \in L_{i}$ and $\phi(z, i) = 0$ for every $z \notin O_{i}$. 
For $\tau = (\tau_{ij} )_{i,j \in V}$, we define the transition operator $M_{\tau}$ of $\tau$ as follows.
$$M_{\tau} \psi (z,i) := \sum_{j \in V}  \int \psi (f (z),j) \,  {\rm d} \tau_{ij} (f) ,  \quad (z, i ) \in \rs \times V.$$
Then, it is easy to show that 
$\{ M_\tau^n \phi \}_{n \in \nn}$ converges pointwise to $ \mathbb{T}_{\mathbb{L}} $ on $\rs \times V$ as $n \to \infty$. 
Since the statement (ii) holds, the family $\{ M_\tau^n \phi \}_{n \in \nn}$ of continuous maps is equicontinuous on $\rs \times V$ by Proposition 3.11, Lemmas 2.39 and 2.38 of \cite{sw19}. 
It follows from the Arzel\`a-Ascoli theorem, the limit $ \mathbb{T}_{\mathbb{L}} $ is also continuous on $\rs \times V$. 
This completes the proof of (v). 

The statement (vi) is a direct consequence of (ii). 
\end{proof}

\begin{theorem}[Main Result \ref{mr:NegLyap}]\label{th:NegLyap}
Let ${\tau} \in \MRDS(\rat+)$ be a mean stable system. 
Then there exists $\alpha < 0$ such that  the following holds  for each $z  \in \rs$. 
There exists a Borel set $\mathfrak{F}$ with  $\tilde{\tau} (\mathfrak{F}) =1 $ such that for every $(f_{n}, e_{n})_{n =1}^{\infty} \in \mathfrak{F}$, we have 
$$\limsup_{n \to \infty} \frac{1}{n} \log \|D ( f_{n} \circ \dots  \circ f_{1}) (z) \| \leq \alpha.$$
Here, $Dg(z)$ denotes the complex differential of a map $g$ at $z$ and $\| \cdot \| $ denotes the norm with respect to the spherical metric. 
\end{theorem}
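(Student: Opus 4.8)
The plan is to convert the uniform hyperbolic contraction on the open sets $(W_i)_{i\in V}$ that already underlies the proof of Theorem~\ref{th:ContraBall}(i) into a derivative estimate in the spherical metric. First I would fix the data supplied by mean stability: by Lemma~\ref{lem:forwardInvMS} choose $N\in\nn$ and forward $S_\tau$-invariant open sets $(U_i)_{i\in V}$, $(W_i)_{i\in V}$ as in Definition~\ref{def:meanstable3}, so that $\overline{U_i}\subset W_i$ and $\overline{W_i}\subset F_i(S_\tau)$. Exactly as in the proof of Theorem~\ref{th:ContraBall}(i), condition~(II) together with the compactness of the $\Gamma_e$ and the finiteness of the set of admissible words of length $N$ produces a constant $c\in(0,1)$, independent of $z$, of the admissible word, and of the chosen maps, such that every length-$N$ composition $f_N\circ\dots\circ f_1$ contracts the hyperbolic metric of the relevant component of $W_{i(e)}$ by the factor $c$; moreover, by the Schwarz--Pick inequality each single $f_n$ maps $W_{i(e_n)}$ into $W_{t(e_n)}$ without increasing the corresponding hyperbolic distances. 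I would then set $\alpha:=(\log c)/N<0$, and note that this value depends on neither $z$ nor the sample path, which is precisely the uniformity the statement demands.

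Next, fix $z\in\rs$. The event
\[
\mathfrak F:=\{(f_n,e_n)_{n=1}^{\infty}\ ;\ f_k\circ\dots\circ f_1(z)\in W_{t(e_k)}\ \text{for some }k\in\nn\}
\]
has $\tilde\tau(\mathfrak F)=1$, being the complement of the null set $\mathfrak E$ produced by \cite[Lemma~3.10]{sw19} in the proof of Theorem~\ref{th:ContraBall}(i). I would then take an arbitrary path in $\mathfrak F$, let $k$ be an entrance time, put $w_k:=f_k\circ\dots\circ f_1(z)\in W_{t(e_k)}$, and split the differential by the chain rule:
\[
D(f_n\circ\dots\circ f_1)(z)=D(f_n\circ\dots\circ f_{k+1})(w_k)\cdot D(f_k\circ\dots\circ f_1)(z).
\]
The head factor $D(f_k\circ\dots\circ f_1)(z)$ has spherical norm equal to a finite constant $C_0$ depending only on $z$ and the fixed path through step $k$. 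For the tail factor, forward invariance keeps the orbit inside the relatively compact region $\bigcup_i\overline{W_i}\subset\bigcup_i F_i(S_\tau)$; grouping the $n-k$ tail maps into $\lfloor(n-k)/N\rfloor$ blocks of length $N$ (each contracting by $c$) and a non-expanding remainder of fewer than $N$ maps, the hyperbolic operator norm of the tail is at most $c^{\lfloor(n-k)/N\rfloor}$. Since $\overline{W_i}$ is a compact subset of $F_i(S_\tau)$, the hyperbolic and spherical densities are comparable there, so there is a uniform constant $C_1>0$ with $\|D(f_n\circ\dots\circ f_{k+1})(w_k)\|\le C_1\,c^{\lfloor(n-k)/N\rfloor}$ in the spherical norm.

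Combining these, for every $n\ge k$ I obtain
\[
\tfrac1n\log\|D(f_n\circ\dots\circ f_1)(z)\|\le \tfrac1n\bigl(\log C_0+\log C_1\bigr)+\tfrac{\lfloor(n-k)/N\rfloor}{n}\log c.
\]
Letting $n\to\infty$ with $z$, the path, $k$, $C_0$ and $C_1$ all fixed, the first term tends to $0$ and the coefficient of $\log c$ tends to $1/N$, whence $\limsup_{n\to\infty}\tfrac1n\log\|D(f_n\circ\dots\circ f_1)(z)\|\le(\log c)/N=\alpha$. The main obstacle I anticipate is the metric bookkeeping in the tail estimate: one must justify that each single map $f_n$ is non-expanding for the hyperbolic metrics of the successive components $W_{i(e_n)}$ and $W_{t(e_n)}$, so that the remainder block costs at most a factor $1$ rather than an uncontrolled expansion, and that the comparison constant $C_1$ between the hyperbolic and spherical norms can be taken uniform over the finitely many relevant components and over all base and image points in the compact invariant region. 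Once the uniform contraction constant $c$ and the metric comparison are secured, the uniformity of $\alpha$ in $z$ is automatic, since all dependence on $z$ and the path is confined to the $o(n)$ contributions $\log C_0$, $\log C_1$ and the entrance time $k$.
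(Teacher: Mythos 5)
Your proposal is correct and is essentially the paper's intended argument: the paper proves Theorem~\ref{th:NegLyap} only by remarking that it follows ``by a similar argument as Theorem~\ref{th:ContraBall}'', and your proof spells out exactly that argument --- almost-sure entrance into the forward-invariant sets $(W_i)_{i\in V}$ via \cite[Lemma~3.10]{sw19}, uniform hyperbolic contraction by $c\in(0,1)$ on length-$N$ blocks with Schwarz--Pick non-expansion in between, comparability of hyperbolic and spherical metrics on the compact invariant region, and the chain rule yielding $\alpha=(\log c)/N$. The one point to keep straight is the metric bookkeeping you already flag (the source-point density ratio at $w_k$ is path-dependent rather than uniform, but it is a fixed finite constant for each sample path and so is absorbed in the $o(n)$ terms), which does not affect the conclusion.
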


\begin{proof}
The statement can be proved by a similar argument as Theorem \ref{th:ContraBall}. 
\end{proof}

We now investigate perturbations of GDMSs and show that attracting minimal sets are stable under perturbations. 
For compactly generated GDMS $S = (V, E,(\Gamma_e)_{e \in E})$, we consider another compactly generated  GDMS $S'  = (V, E, (\Gamma_e')_{e \in E})$ such that $\Gamma'_{e}$ is close to $\Gamma_{e}$ with respect to  the Hausdorff metric  for each $e \in E.$

\begin{lemma}\label{lem:prtrbAttrMinSet}
Let  $X \subset \rat+$. 
Let $S= (V, E,(\Gamma_e)_{e \in E})$ be an  irreducible GDMS such that $\Gamma_{e} \in \Cpt(X)$ for each $e \in E$.  
Suppose that $S$ has  an attracting minimal set $(L_i)_{i \in V}.$   
Take $N \in \nn,$ open sets  $(U_{i})_{i \in V}$ and $(W_{i})_{i \in V}$ for $(L_i)_{i \in V}$ as in Definition \ref{def:attractingMinimal}. 
Also, take an open set $G_{i}$, which is close to $L_{i},$ such that $L_{i} \subset G_{i} \subset \overline{G_{i}} \subset U_{i}$  for each $i \in V.$  

Then there exists an open neighborhood $\mathcal{U}_{e}$ of $\Gamma_{e}$ in $\Cpt(X)$  for each $e \in E$  such that for each compact set $\Gamma_{e}' \in \mathcal{U}_{e}$,  GDMS  $S' = (V, E, (\Gamma_e')_{e \in E})$ has a unique  minimal set $(L'_i)_{i \in V}$  such that $(L'_i)_{i \in V} \subset (G_{i})_{i \in V}$ and   $(L'_i)_{i \in V}$ is attracting for $S'.$ 
 \end{lemma}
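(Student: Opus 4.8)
The plan is to read the attracting property of Definition~\ref{def:attractingMinimal} as a \emph{trapping} condition on compositions of a fixed length, to note that such a condition is stable under small perturbations, and then to pin down the perturbed minimal set as the unique fixed point of a graph-directed hyperbolic contraction inside $(G_i)_{i\in V}$. By Lemma~\ref{lem:forwardInvAttr} I may take $(U_i)_{i\in V}$ and $(W_i)_{i\in V}$ forward $S$-invariant, and by Lemma~\ref{lem:attractingMinimal} the length-$N$ compositions of $S$ contract $\hypd$ on the components $O_i$ of $F_i(S)$ meeting $L_i$ by a uniform factor $c'\in(0,1)$. Iterating the contraction, I first fix a multiple $M_1$ of $N$ so large that every admissible length-$M_1$ word of $S$ carries $\overline{W_{i(e)}}$ into $G_{t(e)}$; this is possible because $L_i$ is a compact subset of the open set $G_i$.

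Since $(f_1,\dots,f_{M_1})\mapsto f_{M_1}\circ\dots\circ f_1$ is continuous for $\kappa$ and $\prod_n\Gamma_{e_n}$ is compact, the containment of the compact image $f_{M_1}\circ\dots\circ f_1(\overline{W_{i(e)}})$ in the open set $G_{t(e)}$ persists under Hausdorff-small perturbations; fixing compact neighborhoods $\Gamma_e^{0}\in\Cpt(X)$ of $\Gamma_e$ on which this length-$M_1$ trapping still holds and restricting to $\Gamma_e'\subset\Gamma_e^{0}$, I form the \emph{fixed} compact sets $\hat W_i:=\bigcup_{0\le \ell<M_1}\bigcup \Gamma^{0}_{\epsilon_\ell}\circ\dots\circ\Gamma^{0}_{\epsilon_1}(\overline{W_{i(\epsilon_1)}})$, the inner union over admissible words of length $\ell$ ending at $i$. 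Decomposing an arbitrary admissible word into length-$M_1$ blocks and a remainder shows $(\hat W_i)_{i\in V}$ is forward invariant for every such $S'$ and contains the whole forward orbit $\bigcup_j H_i^j(S')(\overline{W_i})$. Taking $\Gamma_e^{0}$ close enough to $\Gamma_e$ keeps $\hat W_i\neq\rs$, so the family $\bigcup_j H_i^j(S')$ sends $W_i$ into a fixed compact set omitting an open set and is therefore normal there by Montel's theorem, giving $\overline{W_i}\subset F_i(S')$.

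Because $\hat W_i$ is a fixed compact subset of the basin of $(L_i)$ for $S$, the uniform contraction lets me choose a second, larger multiple $M_2$ of $N$ so that the length-$M_2$ words of $S$—and, after shrinking the $\Gamma^{0}_e$ once more, of $S'$—carry $\overline{\hat W_{i(e)}}$ into $G_{t(e)}\subset U_{t(e)}\subset W_{t(e)}$. With $\overline{W_i}\subset F_i(S')$ now in hand, the argument of Lemma~\ref{lem:attractingMinimal} promotes this trapping to a uniform $\hypd$-contraction by some $c''\in(0,1)$ of the length-$M_2$ compositions of $S'$ on the components of $F_i(S')$ meeting $\overline{G_i}$. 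Applying Zorn's lemma to the forward $S'$-invariant compact family $(\hat W_i)$ exactly as in Lemma~\ref{lem:allAttrMinThenMS} produces a minimal set $(L_i')\subset(\hat W_i)$; iterating the self-similarity of Lemma~\ref{lem:selfsimilar} to length $M_2$ gives $L_j'=\bigcup_{t(e)=j}\Gamma_{e_{M_2}}'\circ\dots\circ\Gamma_{e_1}'(L_{i(e)}')$, and since $L_{i(e)}'\subset\hat W_{i(e)}$ is mapped into $G_j$ this forces $(L_i')\subset(G_i)$. Finally, on the complete metric space $\prod_{i\in V}\Cpt(\overline{G_i})$ with the Hausdorff metric of $\hypd$, the operator carrying $(A_i)_{i\in V}$ to $\bigl(\bigcup_{t(e)=j}\Gamma_{e_{M_2}}'\circ\dots\circ\Gamma_{e_1}'(A_{i(e)})\bigr)_{j\in V}$ preserves the space and is a $c''$-contraction; every minimal set of $S'$ lying in $(\overline{G_i})$ is one of its fixed points, so $(L_i')$ is the unique minimal set of $S'$ inside $(G_i)$, and its trapping sets $U_i,W_i$ with the length $M_2$ exhibit it as attracting for $S'$.

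The main obstacle is the normality step of the second paragraph: the Fatou and Julia sets need not vary continuously with the system, so $\overline{W_i}\subset F_i(S')$ cannot be quoted and must be rebuilt for $S'$ from the trapping condition through the Montel argument. The use of two word-lengths $M_1<M_2$ is also essential, since a single contraction depth only returns $\overline{W_i}$ into itself, whereas compressing the enlarged but genuinely forward-invariant region $\hat W_i$ into the prescribed neighborhood $G_i$ requires a strictly greater depth; the remaining ingredients are compactness and the Banach contraction principle.
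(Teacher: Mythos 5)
Your proposal is correct in substance and, for existence, localization and attractivity, runs on the same rails as the paper's proof: the paper likewise takes $(U_i)_{i\in V}$ and $(W_i)_{i\in V}$ forward $S$-invariant via Lemma \ref{lem:forwardInvAttr}, enlarges the word length once so that length-$N$ compositions send $\overline{W_{i(e)}}$ into $G_{t(e)}$, transfers this trapping to all $\Gamma_e'$ Hausdorff-close to $\Gamma_e$ together with a companion condition bounding the intermediate images $g_\ell\circ\dots\circ g_1\left(\overline{U_{i(e_1)}}\right)\subset W_{t(e_\ell)}$ for $1\le\ell\le N-1$ --- your sets $\hat W_i$ and the two scales $M_1<M_2$ are exactly this bookkeeping in a different packaging --- then extracts a minimal set by Zorn from the forward $S'$-invariant compact family $K_i'=\overline{\bigcup_{k}H_k^i(S')(L_k)}$ (where you use $(\hat W_i)$ instead), localizes it into $(G_i)_{i\in V}$ by iterating Lemma \ref{lem:selfsimilar}, and obtains $W_i\subset F_i(S')$ by the same Montel trapping argument that you correctly flag as unavoidable, since Fatou sets need not vary continuously with the system. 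Where you genuinely diverge is the uniqueness step: the paper argues softly that, $G_i$ being close to $L_i$, one may assume every $S'$-orbit starting in $G_i$ tends to the attracting minimal set $(L_i')_{i\in V}$, so no second minimal set fits inside $(G_i)_{i\in V}$; you instead characterize every minimal set of $S'$ contained in $(\overline{G_i})_{i\in V}$ as a fixed point of the length-$M_2$ set-valued operator (via iterated Lemma \ref{lem:selfsimilar}) and invoke the Banach contraction principle on $\prod_{i\in V}\Cpt\left(\overline{G_i}\right)$, which is more quantitative and even gives uniqueness in the closed family. One repair your contraction argument needs: the ``Hausdorff metric of $\hypd$'' is not literally a metric on $\Cpt\left(\overline{G_i}\right)$ when $\overline{G_i}$ meets several components of $F_i(S')$, since hyperbolic distance across components is infinite. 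This is fixable by standard means: $\overline{G_i}$, being compact, meets only finitely many components, and hyperbolic and spherical metrics are comparable on compact subsets of the Fatou set (as the paper itself uses in the proof of Theorem \ref{th:ContraBall}), so after replacing $M_2$ by a sufficiently high multiple the factor $c''$ beats the comparability constants and the operator contracts the spherical Hausdorff metric on the complete space $\prod_{i\in V}\Cpt\left(\overline{G_i}\right)$, where Banach's theorem applies; with this patch your argument is sound.
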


\begin{proof}
We may assume that both $(U_{i})_{i \in V}$ and $(W_{i})_{i \in V}$ are   forward $S$-invariant by Lemma \ref{lem:forwardInvAttr}. 
By Lemma \ref{lem:attractingMinimal}, $d (f_{n} \circ \dots \circ f_{1}(x) , L_{t(e_{n})} ) \to 0$ as $n \to \infty$ for each infinite admissible word  $(e_{1}, e_{2},  \dots )$, $f_{n} \in \Gamma_{e_{n}}$ $(n=1, 2, \dots)$ and $x\in \overline{W_{i(e_{1)}}}$. 
Thus, taking $N$ safficiently large, we may assume $\overline{f_{N} \circ \dots  \circ f_{1} (W_{i(e)})} \subset G_{t(e)}$  for each admissible word $e=(e_{1}, \dots, e_{N})$ with length $N$ and each $f_{n} \in \Gamma_{e_{n}} \, (n=1,\dots, N)$. 

Then  there exists an open neighborhood $\mathcal{U}_{e}$ of $\Gamma_{e}$ for each $e \in E$ with respect to the topology of $\Cpt(X)$ such that for each $e \in E$ and $\Gamma_{e}' \in \mathcal{U}_{e}$, another GDMS  $S'= (V, E, (\Gamma_e')_{e \in E})$ satisfies the following two conditions. 
\begin{itemize}
\item[(a)] $\overline{g_{N} \circ \dots  \circ g_{1} (W_{i(e)})} \subset G_{t(e)}$ for each admissible word $e=(e_{1}, \dots, e_{N})$ with length $N$ and each $g_{n} \in \Gamma'_{e_{n}} \, (n=1,\dots, N)$. 
\item[(b)] $g_{\ell} \circ \dots  \circ g_{1} \left(\overline{U_{i(e_{1})}}\right) \subset W_{t(e_{\ell})}$ for each $1 \leq \ell \leq N-1$, admissible word $e=(e_{1}, \dots, e_{\ell})$ with length $\ell$ and  $g_{j} \in \Gamma'_{e_{j}} \, (j=1,\dots, \ell)$. 
\end{itemize}
Fix  $\Gamma_{e}' \in \mathcal{U}_{e}$ for each $e \in E,$ and  
define $S'= (V, E, (\Gamma_e')_{e \in E})$. 

For each $i \in V$, we consider 
${K}'_{i} = \overline{\bigcup_{k \in V} H_{k}^{i}(S') ({L}_{k})}.$ 
Then $({K}'_{i} )_{i\in V} $ is a family of compact sets which is  forward $S'$-invariant, 
and hence there exists a minimal set $({L'_{i}})_{i \in V}$ of $S'$ such that $({L'_{i}})_{i \in V} \subset ({{K}'_{i}})_{i \in V}.$ 
Also, it follows from (a) and (b) that $({K}'_{i} )_{i\in V} \subset ({W}_{i})_{i\in V} $,
and consequently $({L}'_{i} )_{i\in V} \subset ({W}_{i})_{i\in V}$. 
Using Lemma \ref{lem:selfsimilar} repeatedly, for each $i\in V$, we have  $L'_{i} = \bigcup \Gamma'_{e_{N}} \circ \dots \circ \Gamma'_{e_{1}}(L'_{i(e_{1)}})$ where the union runs over all admissible words $e=(e_{1}, \dots, e_{N})$ with length $N$ such that $t(e)=i$. 
It follows from (a) that $(L'_i)_{i \in V} \subset (G_{i})_{i \in V}.$ 
Also, by Montel's theorem, we have that  $W_{i} \subset F_{i}(S')$ for each $i \in V,$ and hence $({L'_{i}})_{i \in V}$  is an attracting minimal set of $S'.$ 
We now show the uniqueness of the minimal set in $(G_{i})_{i \in V}$. 
Since $G_{i}$ is close to $L_{i}$, we may assume that for every point $p \in G_{i}$, the $S'$-orbit tends to the attracting minimal set $(L'_{i})_{i \in V}$. 
Thus, minimal sets in $(G_{i})_{i \in V}$ is unique. 
This completes our proof. 
\end{proof}

Using the lemma above, we can control global stability as follows.

\begin{prop}\label{prop:prtbMS}
Let  $X \subset \rat+$. 
Suppose GDMS $S= (V, E,(\Gamma_e)_{e \in E})$  is irreducible and  mean stable with  $\Gamma_{e} \in \Cpt(X)$ for each $e \in E$. 
Then there exists an open neighborhood $\mathcal{U}_{e}$ of $\Gamma_{e}$ in $\Cpt(X)$  for each $e \in E$  such that 
for each $\Gamma_{e}' \in \mathcal{U}_{e}$, another GDMS  $S'= (V, E, (\Gamma_e')_{e \in E})$ is also mean stable. 
\end{prop}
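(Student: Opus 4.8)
The plan is to verify directly that $S'$ satisfies the three conditions of Definition \ref{def:meanstable3} using the same data $(U_i)_{i\in V}$, $(W_i)_{i\in V}$ and $N$ that witness the mean stability of $S$, provided each $\Gamma_e'$ is taken Hausdorff-close to $\Gamma_e$. First I would fix $N$, $(U_i)_{i\in V}$ and $(W_i)_{i\in V}$ as in Definition \ref{def:meanstable3}, and invoke Lemma \ref{lem:forwardInvMS} to assume in addition that $(U_i)_{i\in V}$ and $(W_i)_{i\in V}$ are forward $S$-invariant, with all the inclusions in (I) and (II) strict (a compact set inside an open set). The neighborhoods $\mathcal{U}_e$ will then be built as a finite intersection of open neighborhoods of $\Gamma_e$ in $\Cpt(X)$, one batch for each of the three conditions.

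Conditions (I) and (II) for $S'$ are the routine perturbative part. Condition (II) for $S$ reads $\overline{f_N\circ\dots\circ f_1(W_{i(e)})}\subset U_{t(e)}$ for $f_n\in\Gamma_{e_n}$; since $\overline{U_{t(e)}}$ is a compact subset of the open set $W_{t(e)}$ and $(f_1,\dots,f_N)\mapsto f_N\circ\dots\circ f_1$ is continuous for the metric $\kappa$, this inclusion survives when each $f_n$ is replaced by a map $g_n\in\Gamma_{e_n}'$ drawn from a Hausdorff-close $\Gamma_{e_n}'$. This is exactly condition (a) in the proof of Lemma \ref{lem:prtrbAttrMinSet}, and together with forward invariance it forces the orbits of $W_i$ under $S'$ to enter and remain in a compact subset of $\bigcup_j W_j$, so that Montel's theorem yields $\overline{W_i}\subset F_i(S')$ (after a harmless shrinking restoring the strict inclusion), i.e. condition (I), just as in Lemma \ref{lem:prtrbAttrMinSet}.

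The crux is condition (III), since it is quantified over all $z\in\rs$ and the realizing word $h\in H_i^j(S)$ with $h(z)\in W_j$ may a priori have unbounded length and depend delicately on $z$. Here I would run a compactness reduction. Fix $i\in V$; for each $z\in\rs$, condition (III) for $S$ supplies $j$ and $h=f_M\circ\dots\circ f_1\in H_i^j(S)$ with $h(z)\in W_j$, and since $W_j$ is open there is an open neighborhood $O_z\ni z$ with $h(\overline{O_z})$ a compact subset of $W_j$. By compactness of $\rs$ and finiteness of $V$, finitely many data $\{(O_\alpha,h_\alpha,j_\alpha)\}$ suffice, the $O_\alpha$ with a fixed initial vertex covering $\rs$. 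These finitely many $h_\alpha$ involve only finitely many generating maps $f_n\in\Gamma_{e_n}$, so Hausdorff-closeness lets us replace each $f_n$ by some $g_n\in\Gamma_{e_n}'$ with $\kappa(g_n,f_n)$ arbitrarily small; continuity of composition then makes the perturbed word $h_\alpha'\in H_{i_\alpha}^{j_\alpha}(S')$ uniformly close to $h_\alpha$, whence $h_\alpha'(\overline{O_\alpha})\subset W_{j_\alpha}$. This gives condition (III) for $S'$ with a uniform word-length bound.

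Taking $\mathcal{U}_e$ to be the intersection of the open neighborhoods demanded by (I)--(II) and by the finitely many words in the (III) step, one concludes that for $\Gamma_e'\in\mathcal{U}_e$ the GDMS $S'$ satisfies Definition \ref{def:meanstable3}, hence is mean stable. (One could instead phrase the conclusion through Corollary \ref{cor:iff}: conditions (II)--(III) force every minimal set of $S'$ into the trap region $\bigcup_j W_j$ and to be attracting, so perturbation cannot create a non-attracting minimal set.) I expect the main obstacle to be precisely the compactness reduction of condition (III): converting the pointwise, variable-length absorption property into a finite, perturbation-robust family is where the real work lies, whereas (I) and (II) are continuity and Montel arguments already performed in Lemma \ref{lem:prtrbAttrMinSet}.
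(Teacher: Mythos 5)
Your proof is correct, and its crux coincides with the paper's: the finite-subcover perturbation of condition (III) --- covering $\rs$ by finitely many $O_{z_\ell}$ with words $h_\ell$ satisfying $h_\ell(\overline{O_{z_\ell}})\subset W_j$, then replacing each generator by a Hausdorff-close one --- is exactly the central step of the paper's proof, almost word for word. Where you diverge is in the surrounding architecture. The paper does \emph{not} verify conditions (I) and (II) of Definition \ref{def:meanstable3} directly for $S'$; instead it invokes Lemma \ref{lem:ifMSthenminimalAreAttr} and Lemma \ref{lem:attrminsetisfinite} to see that $S$ has finitely many minimal sets, all attracting, applies Lemma \ref{lem:prtrbAttrMinSet} to get their persistence under perturbation, uses the finite-cover argument to show every minimal set of $S'$ meets the trap region (so $S'$ has no J-touching or sub-rotative minimal sets, by the classification of Proposition \ref{prop:clsfcationOfMin}), and concludes via Lemma \ref{lem:allAttrMinThenMS} --- i.e.\ via the characterization of Corollary \ref{cor:iff}, the route you mention only parenthetically. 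Your direct verification is a legitimate alternative: it buys a self-contained proof with an explicit uniform word-length bound in (III) and avoids the minimal-set machinery, but at the cost of re-deriving, for the mean-stability witnesses $(W_i)$, the trapping-plus-Montel argument that the paper outsources to Lemma \ref{lem:prtrbAttrMinSet}. Two small points to tighten in your version: forward $S$-invariance of $(W_i)$ alone is not perturbation-stable (it is not a compact containment), so for the claim that $S'$-orbits of $W_i$ remain in $\bigcup_j W_j$ you should state and perturb the analogue of condition (b) of Lemma \ref{lem:prtrbAttrMinSet} (short words map $\overline{U_i}$ into $W$, which \emph{is} a compact containment, hence robust), handle words of length $<N$ separately by uniform Lipschitz bounds on compact families, and apply Montel vertex-by-vertex since the omitted triple depends on the terminal vertex $j$; and since you shrink $W_i$ to restore $\overline{W_i}\subset F_i(S')$, you must choose the shrunken sets \emph{after} fixing the finite data of the (III) step, so that the compacts $h_\ell(\overline{O_{z_\ell}})$ and $\overline{U_i}$ stay inside the shrunken $W_j$. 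Both are routine repairs, not gaps in the idea.
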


\begin{proof}
By Lemma \ref{lem:ifMSthenminimalAreAttr} and Lemma \ref{lem:attrminsetisfinite}, 
$S$ has finitely many minimal sets, which are all attracting.  
It follows from Lemma \ref{lem:prtrbAttrMinSet} that  there exists an open neighborhood $\mathcal{U}_{e}$ of $\Gamma_{e}$  for each $e \in E$ with respect to the Hausdorff metric such that for each $\Gamma_{e}' \in \mathcal{U}_{e}$, 
another GDMS  $S'= (V, E, (\Gamma_e')_{e \in E})$ satisfies the following. 
For each attracting minimal set of $S$, another system $S'$ has the attracting  minimal set  close to it.   

By Lemma \ref{lem:allAttrMinThenMS}, it suffices to show that $S'$ does not have any minimal set except these attracting ones. 
For mean stable GDMS $S$, we take  $(W_{i})_{i \in V}$  as in   Definition \ref{def:meanstable3}. 
Then, for each $z \in \rs$ and $i \in V$, there exist $j \in V_{}$ and $h \in H_{i}^{j}(S)$ such that $h(z) \in W_{j}.$ 
Choose an open neighborhood $O_{z}$ of $z$ so that  $h\left(\overline{O_{z}}\right) \subset W_{j}.$ 
Since $\rs$ is compact, there exist finitely many $O_{z_{1}}, \dots, O_{z_{m}}$ and $h_{1}, \dots, h_{m} \in H_{i}^{j}(S)$ such that $\rs = \bigcup_{\ell=1}^{k} O_{z_{\ell}}$ and $h_{\ell}\left(\overline{O_{z_{\ell}}}\right) \subset W_{j}.$ 
Taking the open neighborhood $\mathcal{U}_{e}$ so small if necessary, 
we may assume that there exist  $h'_{1}, \dots, h'_{m} \in H_{i}^{j}(S')$ such that $h'_{\ell}\left(\overline{O_{z_{\ell}}}\right) \subset W_{j}.$ 
Therefore, for each $z \in \rs$ and $i \in V$, there exists $h' \in H_{i}^{j}(S')$  such that $h'(z) \in W_{j},$ and hence  $S'$ does not have J-touching nor sub-rotative minimal sets.  
This completes our proof. 
\end{proof}

The following is  a consequence of Proposition \ref{prop:prtbMS}. 

\begin{cor}[A part of Main Result \ref{mr:dense}]\label{cor:AisOpen}
Let $X \subset \rat+$. Then 
the set  $\mathcal{A}(X)$ of all mean stable systems is open in $\MRDS(X)$. 
\end{cor}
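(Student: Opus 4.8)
The plan is to reduce the openness of $\mathcal{A}(X)$ directly to the perturbation stability already established in Proposition \ref{prop:prtbMS}. The first observation I would make is that mean stability of an MRDS $\tau$ is a property of its induced GDMS $S_\tau$ alone: both Definition \ref{def:meanstable} and conditions (I)--(III) of Definition \ref{def:meanstable3} are phrased entirely in terms of the vertex set, the edge set, and the generating compact sets $\Gamma_e = \supp \tau_e$, and never refer to the measures $\tau_e$ themselves. Hence $\tau$ is mean stable if and only if the GDMS $S_\tau = (V, E, (\supp \tau_e)_{e \in E})$ is mean stable, and the weak*-topology appearing in condition (iii) of Definition \ref{def:topMRDS} is irrelevant for this question.

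Next I would fix $\tau \in \mathcal{A}(X)$ with associated directed graph $(V, E)$. Since $S_\tau$ is irreducible, compactly generated with $\supp \tau_e \in \Cpt(X)$, and mean stable, Proposition \ref{prop:prtbMS} supplies an open neighborhood $\mathcal{U}_e$ of $\Gamma_e = \supp \tau_e$ in $\Cpt(X)$ (with respect to the Hausdorff metric) for each $e \in E$, such that every GDMS $(V, E, (\Gamma_e')_{e \in E})$ with $\Gamma_e' \in \mathcal{U}_e$ for all $e$ is again mean stable.

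The remaining step is to translate these Hausdorff-metric neighborhoods into a neighborhood of $\tau$ in $\MRDS(X)$, which I would do sequentially in accordance with the convergence defined in Definition \ref{def:topMRDS}. Suppose, for contradiction, that $\mathcal{A}(X)$ is not open at $\tau$; then there is a sequence $\{\tau^n\}_{n \in \nn}$ in $\MRDS(X) \setminus \mathcal{A}(X)$ converging to $\tau$. By condition (i) the associated directed graph of $\tau^n$ equals $(V, E)$ for all large $n$, and by condition (ii) we have $\supp \tau^n_e \to \supp \tau_e$ in the Hausdorff metric for each $e \in E$, so $\supp \tau^n_e \in \mathcal{U}_e$ for all $e \in E$ once $n$ is large. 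For such $n$, the GDMS $S_{\tau^n} = (V, E, (\supp \tau^n_e)_{e \in E})$ is mean stable by Proposition \ref{prop:prtbMS}, hence $\tau^n$ is mean stable, i.e.\ $\tau^n \in \mathcal{A}(X)$, contradicting the choice of the sequence.

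I do not anticipate a serious obstacle: the substance of the statement is carried entirely by Proposition \ref{prop:prtbMS}, and what remains is bookkeeping that matches the topology of $\MRDS(X)$ with the Hausdorff metric on $\Cpt(X)$. The only point deserving care is the conceptual reduction in the first paragraph---that mean stability is insensitive to the measures and depends only on their supports together with the graph---since it is precisely this that lets a statement about GDMSs be invoked verbatim, with condition (iii) of the convergence playing no role whatsoever.
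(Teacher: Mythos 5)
Your proposal is correct and follows the paper's own route exactly: the paper derives Corollary \ref{cor:AisOpen} directly from Proposition \ref{prop:prtbMS}, leaving implicit precisely the bookkeeping you spell out, namely that mean stability of $\tau$ depends only on $S_\tau = (V,E,(\supp\tau_e)_{e\in E})$ and that convergence in $\MRDS(X)$ forces the supports into the Hausdorff-metric neighborhoods $\mathcal{U}_e$ for large $n$ (with condition (iii) of Definition \ref{def:topMRDS} indeed playing no role). Your sequential contradiction argument is the natural way to phrase openness for the convergence structure of Definition \ref{def:topMRDS}, and no step is missing.
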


We now show the key lemma to Main Results \ref{mr:dense} and \ref{mr:biffam}. 
The assumption that $X$ is $\condition$ is essential here. 
It is easy to check that  $\rat+$, $\Poly$ and  $\{f(z)  + c\,  ; \,  c \in \mathbb{C} \}$ $ (f \in \Poly)$ is $\condition$ for example. 
Note that we see that any set $X$ in the following Example \ref{ex:degenerate} does not satisfy the $\condition$ condition. 

\begin{example}\label{ex:degenerate}
Define $g_{\theta}(z) = \theta z (1-z)$ for each $\theta \in \CC \setminus \{0\}$. 
Then $X=\{ g_{\theta} \in \poly ; \theta \in \CC \setminus \{0\} \}$ is NOT $\condition$ since they have a common fixed point $z=0$. 
Also, for $P \in \poly$, define $N_{\theta}(z) = z - \theta P(z)/P'(z)$ for each $\theta \in \CC \setminus \{0\}$. 
Then  $X=\{ N_{\theta} \in \rat+ ; \theta \in \CC \setminus \{0\}\}$ is NOT $\condition$ 
since all $N_{\theta}$ have common fixed points $z_{0}$ which satisfy $P(z_{0}) = 0$. 
However, we can consider their mean stability in the weak sense. See \cite{sumi21}. 
\end{example}

\begin{lemma}\label{lem:AisDense}
Suppose  $X \subset \rat+$ is  {\condition}. 
Let $\tau \in \MRDS(X).$  
If $S_{\tau} = (V, E,(\Gamma_e)_{e \in E})$ has an attracting minimal set, then $\tau \in \overline{\mathcal{A}(X)}$ where the closure is taken in the space $\MRDS(X)$ with respect to the topology in Definition \ref{def:topMRDS}.    
\end{lemma}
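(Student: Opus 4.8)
The plan is to realize $\tau$ as a limit of mean stable systems obtained by slightly \emph{thickening} the supports $\Gamma_e=\supp\tau_e$, exploiting that $X$ is \condition\ to force the thickened systems to spread images into open sets. Throughout I work with the induced GDMS and use Corollary \ref{cor:iff}: a system is mean stable iff all of its minimal sets are attracting. First I would fix the data coming from the hypothesis. Since $S_\tau$ has an attracting minimal set, Lemma \ref{lem:attrminsetisfinite} shows there are only finitely many of them, and by Lemma \ref{lem:prtrbAttrMinSet} each persists under a sufficiently small perturbation of the supports. Taking $(W_i)_{i\in V}$ to be the union of the absorbing neighborhoods of all attracting minimal sets and $N$ the product of the associated integers, conditions (I) and (II) of Definition \ref{def:meanstable3} hold automatically for every system close enough to $\tau$. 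Thus the whole problem reduces to securing condition (III): for every $z\in\rs$ and $i\in V$ there is $h\in H_i^j$ with $h(z)\in W_j$.

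Second, I construct the approximants. For each $n$, let $\Gamma_e^{(n)}$ be the intersection with $X$ of the closed $1/n$-neighborhood of $\Gamma_e$ in $(\rat+,\kappa)$, and let $\tau_e^{(n)}$ be a measure with this support converging weak$^\ast$ to $\tau_e$; then $\tau^{(n)}\to\tau$ in the topology of Definition \ref{def:topMRDS}. The gain is the non-degeneracy (Definition \ref{def:condition}): for each $f_0\in\Gamma_e$ and each $z\in\rs$ there is a holomorphic family $\{g_\theta\}$ in $X$ through $f_0$ with $\theta\mapsto g_\theta(z)$ non-constant near $\theta_0$; since $g_\theta\to f_0$ in $\kappa$ as $\theta\to\theta_0$, a small $\theta$-disc lies in $\Gamma_e^{(n)}$, and the open mapping theorem makes $\Gamma_e^{(n)}(z)$ contain a neighborhood of $f_0(z)$. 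Consequently, if $(M_i)_{i\in V}$ is any minimal set of $S^{(n)}:=S_{\tau^{(n)}}$, then for every vertex $i$ (all $M_i$ are non-empty by irreducibility and Lemma \ref{lem:OrbitIsDence}) forward invariance forces $M_{t(e)}\supset\Gamma_e^{(n)}(M_i)$ to contain an open set; hence \emph{every} $M_j$ has non-empty interior $O_j$.

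Third, I would prove (III) for $S^{(n)}$ by contradiction. If it fails, then $C_i:=\{z:\ h(z)\notin W_j\text{ for all }j\text{ and all }h\in H_i^j\}$ is non-empty, compact and forward $S^{(n)}$-invariant, so by Zorn's lemma it contains a minimal set $(M_i)$ with $M\cap W=\emptyset$. By Corollary \ref{cor:iff} and the choice of $(W_i)$, $M$ cannot be attracting, so by Proposition \ref{prop:clsfcationOfMin} it is $J$-touching or sub-rotative. Suppose first that some $O_j$ meets $J_j(S^{(n)})$. Writing $U=\bigcup_{h\in H_j^j}h(O_j)\subset M_j$ (forward invariance), its complement $E=\rs\setminus U$ is closed and backward-invariant under the degree-$\ge 2$ semigroup $H_j^j$; by the characterization of the Julia set as the smallest closed backward-invariant set with more than two points, either $E\supset J_j(S^{(n)})$ (impossible, since $U$ meets $J_j(S^{(n)})$) or $\#E\le 2$. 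Hence $M_j\supset U\supset\rs\setminus E$ with $\#E\le 2$, so $M_j$ meets the open set $W_j$ — contradicting $M\cap W=\emptyset$.

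The remaining case, where all interiors $O_j$ lie in the Fatou set, is exactly the sub-rotative scenario, and I expect it to be the main obstacle: on a rotation domain the relevant maps are hyperbolic isometries, so the Schwarz--Pick area/contraction estimates degenerate to equalities and the interior can a priori survive. The way through is to invoke the thickening once more: from an interior point of $M$ inside the rotation domain, the open family $\Gamma_e^{(n)}$ produces images filling a full two-dimensional neighborhood, all lying in $M$ by forward invariance; such a neighborhood cannot be invariant under an irrational rotation, so some admissible image escapes the rotation domain, either reaching $J_j(S^{(n)})$ (reducing to the covering argument above) or entering the basin of a persisting attracting minimal set and hence $W$ — in both cases contradicting $M\cap W=\emptyset$. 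Making this routing precise, together with the routine verification that each $\Gamma_e^{(n)}$ is compact and contained in $X$, is the crux. Once (III) is established, Lemma \ref{lem:allAttrMinThenMS} yields that every $\tau^{(n)}$ is mean stable, so $\tau^{(n)}\in\mathcal{A}(X)$ and $\tau\in\overline{\mathcal{A}(X)}$.
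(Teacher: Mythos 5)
Your overall strategy — thicken the supports, use the non-degeneracy condition plus the open mapping theorem to give minimal sets of the perturbed system non-empty interior, and kill non-attracting minimal sets via a Montel-type three-point argument — is genuinely the paper's strategy, and your Case A is essentially the paper's J-touching argument (your ``smallest closed backward-invariant set'' step is what the paper gets from \cite[Lemma 2.11]{sw19} and Montel's theorem). But there is a structural gap in how you set up the classification. You classify a minimal set $M$ of the \emph{thickened} system $S^{(n)}$ directly. The paper instead observes that any minimal set $(K_i')_{i\in V}$ of the thickened system $S_\rho$ is forward $S_\tau$-invariant (since $\supp\tau_e\subset\supp\rho_e$), extracts by Zorn's lemma a minimal set $(K_i)_{i\in V}$ of the \emph{original} system inside it, and applies Proposition \ref{prop:clsfcationOfMin} to $(K_i)_{i\in V}$ with respect to $S_\tau$. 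This is not cosmetic: every map that must then be perturbed — the $f\in\supp\tau_e$ with $f(p)\in J_{t(e)}(S_\tau)$ produced by the backward self-similarity of the Julia set, and the maps composing the word fixing the rotation domain — lies in $\mathrm{int}(\supp\rho_e)$, so the non-degenerate holomorphic family $\{g_\theta\}$ stays \emph{inside the support} of the perturbed system. In your version the relevant witnesses come from the full supports $\Gamma_e^{(n)}$ and may lie on their boundary, where the wiggling argument is unavailable; relatedly, your dichotomy (``interior $O_j$ meets $J_j(S^{(n)})$'' versus ``all interiors in the Fatou set'') is not the trichotomy of Proposition \ref{prop:clsfcationOfMin}: $M$ could be J-touching only at points of $M_j\setminus O_j$, a configuration neither of your branches handles. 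A smaller but real flaw of the same kind: ``$M$ cannot be attracting by Corollary \ref{cor:iff} and the choice of $(W_i)$'' does not follow — nothing a priori excludes a new attracting minimal set of $S^{(n)}$ far from your pre-fixed $W$; the correct formulation (and the paper's) is to prove that \emph{all} minimal sets of the thickened system are attracting and then invoke Lemma \ref{lem:allAttrMinThenMS}, rather than to verify condition (III) for a $W$ chosen in advance.

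The sub-rotative case, which you correctly flag as the crux, is where your proposed mechanism actually fails. The claim ``such a neighborhood cannot be invariant under an irrational rotation'' is false: a rigid irrational rotation of the disk leaves every round annulus invariant, so a two-dimensional closed subset of a rotation domain can perfectly well be forward invariant under the returning word, and consequently your escape conclusion does not follow. Your fallback dichotomy (``escape reaches $J_j(S^{(n)})$ or enters the basin of a persisting attracting minimal set'') is also unsupported — an orbit leaving the rotation domain need not do either, as there is no classification of Fatou components under GDMS forward orbits to appeal to. The paper's mechanism is different: since $\supp\tau_e\subset\mathrm{int}(\supp\rho_e)$ for each $e$, one produces $g\in H_i^i(S_\rho)$ with $g(p)\in\partial D\subset J_i(S_\tau)\subset J_i(S_\rho)$ — the enlarged supports permit a drift of the orbit of $p$ out to the boundary of the rotation domain — and this point of $K'\cap J(S_\rho)$ is then fed into the same open-mapping/Montel contradiction as in the J-touching case. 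Finally, a technical point you flagged yourself: the closed $1/n$-neighborhood of $\Gamma_e$ intersected with $X$ need not be compact (nor contained in $X$ with $\Gamma_e$ in its relative interior) since $X$ is only of the form $A\cap B$ with $A$ open; this is repairable, e.g.\ by covering $\Gamma_e$ with finitely many small closed balls compactly contained in $X$, but it must be done before the rest of the argument can run.
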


\begin{proof}
We approximate $\tau_{e}$ by a Borel measure $\rho_{e}$ on $X$ such that the total measure of $\tau_{e}$ and $\rho_{e}$ coincide   and  $\supp \tau_{e} \subset  \mathrm{int}  (\supp \rho_{e})$ for each $e \in E$, 
where $\mathrm{int}$ denotes the set of all interior points in the space $X$ endowed with the relative topology from $\rat+$. 
Also, define $\rho_{e} =0$ if $\tau_{e} =0.$
Note that the associated directed graph of $\rho$ is the same as that of $\tau$. 
Set  $\rho = (\rho_{e})_{e \in E} \in \MRDS(X).$  
We show that $\rho \in \mathcal{A}(X)$ if  $\supp \rho_{e}$ is sufficiently close to $\supp \tau_{e}$. 
Let $(L_i)_{i \in V}$ be an attracting minimal set of original system $S_{\tau}.$ 
By Lemma \ref{lem:prtrbAttrMinSet}, if $\supp \rho_{e}$ is sufficiently close to $\supp \tau_{e}$ for each $e \in E$, then  there exists an attracting minimal set  $(L'_i)_{i \in V}$ of $S_\rho.$ 
Pick any minimal set $(K'_i)_{i \in V}$ of $S_\rho.$ 
Since $\supp \tau_{e} \subset \supp \rho_{e}$ for each $e \in E,$ the set $(K'_i)_{i \in V}$ is forward $S_\tau$-invariant. 
By Zorn's lemma, there exists a minimal set $(K_i)_{i \in V}$ of $S_\tau$ such that $(K_i)_{i \in V} \subset  (K'_i)_{i \in V}.$  

We next  show  that $(K_i)_{i \in V}$ is attracting for $S_{\tau}$. 
Suppose that it is not attracting, then 
by Proposition \ref{prop:clsfcationOfMin}, there are possibly two cases. 

(I) For the J-touching case, there exists $i \in V$ and $p \in \rs$ such that $p \in K_i \cap J_i(S_\tau). $ 
By \cite[Proposition 2.16]{sw19}, we have  that $J_i(S_\tau) = \bigcup_{i(e) =i} \bigcup_{f \in \supp \tau_e} f^{-1} ( J_{t(e)} (S_\tau)).$ 
Thus, there exist $e \in E$ and $f  \in \supp \tau_e$ such that $f(p) \in  J_{t(e)} (S_\tau).$ 
Since $X$ is $\condition$, 
there exists a holomorphic family $\{ g_{\theta} \}_{\theta \in \Theta} $ of rational maps parametrized by a complex manifold $\Theta$ with $\{ g_{\theta} ; \theta \in \Theta\}  \subset X$ such that $ g_{\theta_{0}} = f$ for some $\theta_{0} \in\Theta$ and $\theta \mapsto g_{\theta}( p) $ is non-constant in any neighborhood of $\theta_{0}$. 
Taking $\Theta$ so small if necessary,  we may assume $\{ g_{\theta}; \theta \in \Theta\} \subset \mathrm{int} (\supp \rho_{e})$. 
Then $f(p) \in J_{t(e)} (S_\rho) \cap \mathrm{int} (K'_{t(e)})$ by the open mapping theorem.  
By \cite[Lemma 2.11]{sw19} and Montel's theorem, we have   that $H_{t(e)}^{t(e)} (S_\rho)$ does not omit three points on $ \mathrm{int} (K'_{t(e)}).$ 
However,  this contradicts the fact that $S_\rho$ has an attracting minimal set $(L'_i)_{i \in V}$   and another minimal set $(K'_i)_{i \in V}.$ 

(II) For the sub-rotative case, there exist $i \in V,$ $h \in H_{i}^{i}(S_{\tau})$ and $p \in \rs$ such that $p \in  K_{i} \cap D,$ where  $D$ is  a connected component of $F_i(S_\tau)$ on which $h$ is conjugate to an irrational rotation. 
Since $\supp \tau_{e} \subset  \mathrm{int}  \supp \rho_{e}$ for each $e \in E,$ there exists $g \in H_{i}^{i}(S_{\rho})$ such that $g(p) \in \partial D \subset J_i(S_\tau) \subset J_i(S_\rho).$ 
Then a similar  argument leads to a contradiction as in the case (I). 

Consequently,  $(K_i)_{i \in V}$ is an attracting minimal set of $S_\tau.$ 
Letting $\supp \rho_{e}$  sufficiently close to $\supp \tau_{e}$ for each $e \in E$, 
 we may assume that  $(K'_i)_{i \in V}$ is an attracting minimal set of $S_\rho$ by Lemma \ref{lem:prtrbAttrMinSet}.  
By Lemma \ref{lem:attrminsetisfinite}, $S_\tau$ has only finitely many attracting sets, and hence all the minimal sets of $S_\rho$ are attracting. 
Therefore, by Lemma \ref{lem:allAttrMinThenMS}, $S_\rho$  is mean stable. 
\end{proof}

As a corollary, we have the following result regarding the polynomial dynamics.

\begin{cor}[Corollary \ref{cor:OD}]\label{cor:OD3}
Suppose  $X \subset \Poly$ is $\condition$ and satisfies the condition (ii) in Definition \ref{def:condition}.  
Then the set $\mathcal{A}(X)$ of all mean stable polynomial dynamics is open and  dense in $\MRDS(X).$   
\end{cor}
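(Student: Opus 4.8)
The plan is to combine the openness from Corollary~\ref{cor:AisOpen} with a density argument driven by Lemma~\ref{lem:AisDense}. Openness of $\mathcal{A}(X)$ in $\MRDS(X)$ is immediate from Corollary~\ref{cor:AisOpen}, so the entire content of the corollary reduces to proving \emph{density}. Given the strong hypothesis that $X \subset \Poly$ and $X$ satisfies condition (ii) of Definition~\ref{def:condition}, Lemma~\ref{lem:AisDense} tells us that $\tau \in \overline{\mathcal{A}(X)}$ whenever $S_{\tau}$ admits an attracting minimal set. Thus it suffices to show that \emph{every} $\tau \in \MRDS(X)$ has at least one attracting minimal set; density of $\mathcal{A}(X)$ then follows because $\overline{\mathcal{A}(X)} = \MRDS(X)$.

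First I would invoke the classification in Proposition~\ref{prop:clsfcationOfMin}: any minimal set $(L_i)_{i\in V}$ of $S_{\tau}$ is J-touching, sub-rotative, or attracting, and existence of \emph{some} minimal set is guaranteed by Zorn's lemma applied to nonempty compact forward $S_{\tau}$-invariant families. The task is therefore to rule out the J-touching and sub-rotative alternatives for polynomial systems, forcing every (or at least one) minimal set to be attracting. The key structural fact I would exploit is that for polynomials the point at infinity is a common superattracting fixed point: $\infty$ lies in the Fatou set $F_i(S_{\tau})$ for every $i \in V$, and the connected component of $F_i(S_{\tau})$ containing $\infty$ is forward invariant in the set-valued sense and is attracting. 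Concretely, I would argue that the minimal set consisting of $\{\infty\}$ at each vertex (or the set-valued limit of orbits of $\infty$) is a minimal set contained in the Fatou set on which every composition contracts the hyperbolic metric toward $\infty$, since near $\infty$ each degree $\geq 2$ polynomial is strongly expanding in the standard coordinate, hence strongly contracting at $\infty$ viewed as an attracting fixed point. This yields an attracting minimal set, verifying the hypothesis of Lemma~\ref{lem:AisDense}.

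With an attracting minimal set in hand for an arbitrary $\tau \in \MRDS(X)$, Lemma~\ref{lem:AisDense} gives $\tau \in \overline{\mathcal{A}(X)}$. Since $\tau$ was arbitrary, $\overline{\mathcal{A}(X)} = \MRDS(X)$, i.e.\ $\mathcal{A}(X)$ is dense; combined with Corollary~\ref{cor:AisOpen} this gives that $\mathcal{A}(X)$ is open and dense, as claimed. I would then note the special case $X = \Poly$, which is $\condition$ and satisfies condition (ii), to recover the final sentence of Corollary~\ref{cor:OD}.

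The main obstacle I anticipate is verifying cleanly that $\infty$ genuinely furnishes an \emph{attracting} minimal set in the GDMS sense of Definition~\ref{def:attractingMinimal}, uniformly across all edges. One must check that the basin-like neighborhood of $\infty$ can be chosen simultaneously for every vertex so that conditions (I) and (II) of Definition~\ref{def:attractingMinimal} hold, and confirm via Lemma~\ref{lem:attractingMinimal} that the hyperbolic contraction is genuine (not merely a fixed point in the Fatou set, which could a priori be sub-rotative). Because $\infty$ is \emph{super}attracting for each polynomial of degree at least two, the contraction is strict and sub-rotativeness at $\infty$ is impossible, so this obstacle is surmountable; but it is the step requiring the most care, and it is precisely where the restriction $X \subset \Poly$ is essential rather than the weaker $\condition$ hypothesis alone.
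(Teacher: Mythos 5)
Your proposal is correct and follows essentially the same route as the paper: openness from Corollary~\ref{cor:AisOpen}, and density by observing that every $\tau \in \MRDS(X)$ with $X \subset \Poly$ has the attracting minimal set $(\{\infty\})_{i \in V}$ (since $\infty$ is superattracting for every polynomial of degree at least two, with compactness of the $\supp \tau_e$ giving the uniform neighborhoods), so Lemma~\ref{lem:AisDense} applies. The only extraneous element is your initial detour through Proposition~\ref{prop:clsfcationOfMin} to rule out J-touching and sub-rotative minimal sets, which is unnecessary because Lemma~\ref{lem:AisDense} requires only the existence of \emph{one} attracting minimal set, exactly as your final paragraphs conclude.
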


\begin{proof}
By Corollary \ref{cor:AisOpen}, $\mathcal{A}(X)$ is open. 
Since $X \subset \Poly$, each $\tau \in \MRDS(X)$ has the attracting minimal set $(\{\infty \})_{i \in V}$. 
Thus, $\mathcal{A}(X)$ is dense by Lemma \ref{lem:AisDense}. 
\end{proof}

We now consider the complement of $\mathcal{A}(X)$. 

\begin{definition}\label{def:chaotic}
We say that a GDMS $S = (V, E,(\Gamma_e)_{e \in E})$ is \textit{\chaotic} if $\mathbb{J}(S) =\rs \times V$ and $(\rs)_{i\in V}$ is a minimal set of $S.$ 
We say that $\tau $ is $\chaotic$  if  associated GDMS $S_{\tau}$ is $\chaotic$, 
and define the set $\mathcal{C}(X)$ as the set of all $\tau \in \MRDS(X)$ which are $\chaotic$. 
\end{definition}

\begin{lemma}\label{lem:MinAndIntImplyC}
Let $X \subset \rat+$ and  $\tau \in \MRDS(X).$ 
If $(\rs)_{i \in V}$ is a minimal set of $S_\tau= (V, E,(\Gamma_e)_{e \in E})$ and $\mathrm{int} (J_j(S_\tau)) \neq \emptyset$ for some $j \in V,$ then $J_i(S_\tau) = \rs$ for each $i \in V,$ and hence $\tau \in \mathcal{C}(X).$    
\end{lemma}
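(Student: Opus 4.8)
The plan is to show directly that every point of $\rs$ belongs to $J_k(S_\tau)$ for every $k \in V$; the requirement $\mathbb{J}(S_\tau)=\rs\times V$ in Definition \ref{def:chaotic} then follows, and since $(\rs)_{i\in V}$ is assumed to be minimal, I will conclude $\tau\in\mathcal{C}(X)$. The guiding idea is that the Julia sets are \emph{backward} invariant, so rather than trying to spread the open set $O:=\mathrm{int}(J_{j_0}(S_\tau))$ forward (which would require a forward invariance that the Julia sets of a graph directed system need not possess), I will push an arbitrary point $w$ forward until it lands inside $O$ and then invoke backward invariance to place $w$ itself in the appropriate Julia set.

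First I would record the backward invariance. Since the family $(F_i(S_\tau))_{i\in V}$ is forward $S_\tau$-invariant (by \cite[Lemma 2.15]{sw19}), for every edge $e$ and every $f\in\Gamma_e$ we have $f(F_{i(e)})\subset F_{t(e)}$; taking complements gives $f^{-1}(J_{t(e)}(S_\tau))\subset J_{i(e)}(S_\tau)$, and composing this along an admissible word yields $h^{-1}(J_{j_0}(S_\tau))\subset J_k(S_\tau)$ for every $k\in V$ and every $h=f_N\circ\dots\circ f_1\in H_k^{j_0}(S_\tau)$.

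Next I would use minimality. Fix $k\in V$ and an arbitrary $w\in\rs$. Since $(V,E)$ is strongly connected and each $\Gamma_e$ is non-empty, the set $H_k^{j_0}(S_\tau)$ is non-empty. Because $(\rs)_{i\in V}$ is a minimal set and $w\in\rs=L_k$, Lemma \ref{lem:OrbitIsDence} gives $\overline{H_k^{j_0}(S_\tau)(\{w\})}=\rs$, so the orbit $H_k^{j_0}(S_\tau)(\{w\})$ meets the non-empty open set $O$. Hence there is $h\in H_k^{j_0}(S_\tau)$ with $h(w)\in O\subset J_{j_0}(S_\tau)$, and the backward invariance from the previous step yields $w\in h^{-1}(J_{j_0}(S_\tau))\subset J_k(S_\tau)$. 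As $w$ and $k$ were arbitrary, $J_k(S_\tau)=\rs$ for every $k\in V$, whence $\mathbb{J}(S_\tau)=\rs\times V$; together with the hypothesis that $(\rs)_{i\in V}$ is minimal this is precisely the definition of $\tau\in\mathcal{C}(X)$ (Definition \ref{def:chaotic}).

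The only real subtlety, and thus the step I would be most careful about, is the direction of invariance: one must not attempt to use a forward-invariance property of the Julia sets (which may fail for graph directed systems), but instead combine the genuine backward invariance of $(J_i(S_\tau))_{i\in V}$ with the density of forward orbits supplied by minimality. The remaining points (existence of admissible words from $k$ to $j_0$ by irreducibility, non-emptiness of each $\Gamma_e$, and the set-theoretic composition of preimages) are routine.
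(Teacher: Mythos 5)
Your proof is correct, and it takes a genuinely different route from the paper's. The paper argues by contradiction: if $J_k(S_\tau)\neq\rs$ for some $k$, then by irreducibility $J_i(S_\tau)\neq\rs$ (equivalently $F_i(S_\tau)\neq\emptyset$) for every $i\in V$, so the family $\left(\overline{F_i(S_\tau)}\right)_{i\in V}$ is non-empty, compact and forward $S_\tau$-invariant; Zorn's lemma then produces a minimal set $(L_i)_{i\in V}$ with $L_j\subset\overline{F_j(S_\tau)}$, and since $(\rs)_{i\in V}$ is assumed minimal, every non-empty compact forward-invariant family must equal $(\rs)_{i\in V}$, forcing $\rs=L_j\subset\overline{F_j(S_\tau)}$, which contradicts $\mathrm{int}(J_j(S_\tau))\neq\emptyset$. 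You instead give a direct argument: from the forward invariance of $(F_i(S_\tau))_{i\in V}$ (\cite[Lemma 2.15]{sw19}) you extract the backward invariance $h^{-1}(J_{j}(S_\tau))\subset J_k(S_\tau)$ for $h\in H_k^{j}(S_\tau)$ (your composition of the edgewise inclusions $f^{-1}(J_{t(e)}(S_\tau))\subset J_{i(e)}(S_\tau)$ along an admissible word is valid), and you combine it with Lemma \ref{lem:OrbitIsDence} applied to the minimal set $(\rs)_{i\in V}$ to land an arbitrary point $w$ inside $\mathrm{int}(J_j(S_\tau))$ and pull the Julia set back to $w$, so that $J_k(S_\tau)=\rs$ for every $k$; the conclusion $\tau\in\mathcal{C}(X)$ via Definition \ref{def:chaotic} is then immediate. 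Both proofs hinge on the minimality of $(\rs)_{i\in V}$, but through different mechanisms: the paper's Zorn-based contradiction is shorter and never tracks individual orbits, while yours is constructive (no appeal to Zorn's lemma, and it exhibits explicitly how each point is captured by the Julia set) and correctly isolates the one real subtlety, namely that only backward invariance of the Julia family is available for a graph directed system. As a side benefit, your argument does not even need the preliminary step that properness of one Julia set propagates to all vertices by irreducibility.
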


\begin{proof}
If there exists $k \in V$ such that $J_k(S_\tau) \neq \rs, $ then  $J_i(S_\tau) \neq \rs $ for each $i \in V$ by the irreducibility of $\tau.$ 
Thus,  there exists a minimal set  $(L_i)_{i \in V}$ such that $L_j \subset \overline{F_j(S_\tau)}$ by Zorn's lemma. 
By the minimality of $(\rs)_{i \in V},$ we have $\rs = L_j.$ 
This contradicts the assumption that $\mathrm{int} J_j(S_\tau) \neq \emptyset.$ 
\end{proof}

\begin{example}\label{ex:C}
Let $f$ be a rational map whose Julia set is $\rs$, say a Latt\`es map \cite[\S 7]{miln06}. 
Let $m=1$ and define $\tau_{11} = \delta_{f}$, the Dirac measure at $f$. 
Then $\tau = (\tau_{11})$ is NOT $\chaotic$ since $f$ has periodic cycles as minimal sets. 
However, it still holds that there exists a dense (actually residual) set $R$ in $\rs$ such that for every $z \in R$, the orbit $\{z, f(z), f^{\circ 2}(z), \dots \}$ is dense in $\rs$, see \cite{miln06}. 

Using this map $f$, we can construct chaotic systems. 
We define a probability measure $\mu$ as the push-forward of the normalized Lebesgue measure under 
$$\{a \in \CC ; \frac{1}{2} \leq | a | \leq 1 \}  \ni a \mapsto f_{a}(z) = (a z^{2} + a) / (z^{2} + 1) \in \rat+$$
and define $\tau_{11} = \delta_{f}/2 + \mu/2.$
We now prove that $\tau = (\tau_{11})$ is $\chaotic$. 
It is enough to show that $\rs$ is minimal. 
For every $z \in \rs$, there exists $g \in \supp \mu$ such that $g(z) \in R$, where $R$ is the dense set above. 
Then $\{g(z), f(g(z)), f^{\circ 2}(g(z)), \dots \}$ is dense in $\rs$, hence $\rs$ is minimal. 

Also, let $m=2$ and define $\tau = (\tau_{ij})_{i, j=1,2}$ by
\begin{equation*}
\left(
    \begin{array}{cc}
      \tau_{11} & \tau_{12}  \\
      \tau_{21} & \tau_{22} \\
    \end{array}
  \right) = 
 \left(
    \begin{array}{cc}
      \frac{1}{2}\delta_{f} &  \frac{1}{2}\delta_{f} \\
      \mu & 0 \\  
    \end{array}       \right). 
\end{equation*}
Then we can show also that  $\tau = (\tau_{ij})_{i, j=1,2}$ is $\chaotic$. 
\end{example}

We now prove the density of mean stable or chaotic MRDSs, which is a part of Main Result \ref{mr:dense}. 

\begin{theorem}[Main Result \ref{mr:dense}]\label{th:denseMainRes}
Suppose  $X \subset \rat+$ is {\condition}. 
Then the union $\mathcal{A}(X) \cup \mathcal{C}(X)$ is dense in the space $\MRDS(X).$ 
\end{theorem}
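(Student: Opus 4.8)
The plan is to prove density by a dichotomy governed by Proposition \ref{prop:clsfcationOfMin}: given an arbitrary $\tau \in \MRDS(X)$, I ask whether $S_\tau$ admits an attracting minimal set. If it does, then Lemma \ref{lem:AisDense} immediately gives $\tau \in \overline{\mathcal{A}(X)} \subset \overline{\mathcal{A}(X) \cup \mathcal{C}(X)}$, and there is nothing more to prove. Hence the entire content lies in the complementary case, where every minimal set of $S_\tau$ is either J-touching (type (I)) or sub-rotative (type (II)). In that case I will show $\tau \in \overline{\mathcal{C}(X)}$ by exhibiting chaotic systems arbitrarily close to $\tau$.

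For that case I would enlarge supports exactly as in the proof of Lemma \ref{lem:AisDense}: replace each $\tau_e$ by a measure $\rho_e$ of equal total mass, with the same associated graph, with $\supp \tau_e \subset \mathrm{int}(\supp \rho_e)$, and with $\supp \rho_e$ as close to $\supp \tau_e$ as desired, so that $\rho \in \MRDS(X)$ lies in any prescribed neighborhood of $\tau$. The goal is then to verify $\rho \in \mathcal{C}(X)$, and the natural tool is Lemma \ref{lem:MinAndIntImplyC}: it suffices to prove (A) that $(\rs)_{i \in V}$ is a minimal set of $S_\rho$, and (B) that $\mathrm{int}(J_j(S_\rho)) \neq \emptyset$ for some $j \in V$.

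For (A) I would take an arbitrary minimal set $(L_i)_{i \in V}$ of $S_\rho$ and show it equals $(\rs)_{i \in V}$; since $(\rs)_{i \in V}$ is trivially forward $S_\rho$-invariant, this simultaneously yields existence, uniqueness and minimality. Because $\supp \tau_e \subset \supp \rho_e$, the family $(L_i)$ is forward $S_\tau$-invariant, so by Zorn's lemma it contains a minimal set $(K_i)$ of $S_\tau$, which by hypothesis is of type (I) or (II). Running the type (I)/(II) analysis of Lemma \ref{lem:AisDense}, i.e. using non-degeneracy of $X$ to find a holomorphic family inside $\mathrm{int}(\supp \rho_e)$ and then the open mapping theorem (with one further non-degenerate push in the sub-rotative case to reach the interior), I obtain an edge $e$ and a point $q \in J_{t(e)}(S_\rho) \cap \mathrm{int}(L_{t(e)})$. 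Then \cite[Lemma 2.11]{sw19} (Montel) shows that $\bigcup_{h \in H_{t(e)}^{t(e)}(S_\rho)} h(U)$ omits at most two points of $\rs$ for a neighborhood $U \subset L_{t(e)}$ of $q$; forward $S_\rho$-invariance and closedness of $(L_i)$ force $L_{t(e)} = \rs$, and strong connectivity together with the surjectivity of every map in $\rat+$ then propagate $L_i = \rs$ to all vertices.

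The main obstacle is step (B), where the single Julia point $q$ must be upgraded to an open set of Julia points. My plan is a preimage-sweeping argument. Using strong connectivity, choose an edge $e'$ with $t(e') = t(e)$, pick $f_0 \in \supp \tau_{e'}$ and a point $w_0$ with $f_0(w_0) = q$ (possible since $\deg f_0 \geq 2$), and apply non-degeneracy to $(f_0, w_0)$ to get a holomorphic family $\{g_\theta\}_{\theta \in \Theta} \subset \mathrm{int}(\supp \rho_{e'})$ with $g_{\theta_0} = f_0$ and $\theta \mapsto g_\theta(w_0)$ non-constant near $\theta_0$. Restricting to a complex one-dimensional slice through $\theta_0$ on which the map is non-constant and writing $\Phi(\theta, w) = g_\theta(w)$, the zero $\theta_0$ of $\Phi(\cdot, w_0) - q$ is isolated, so the argument principle (continuity of roots) provides, for every $w$ near $w_0$, a parameter $\theta(w) \in \Theta$ with $g_{\theta(w)}(w) = q$. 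Thus a whole neighborhood of $w_0$ lies in $\bigcup_{\theta}(g_\theta)^{-1}(q)$, which by the backward-invariance formula $J_{i(e')}(S_\rho) = \bigcup_{i(e'')=i(e')}\bigcup_{f \in \supp \rho_{e''}} f^{-1}(J_{t(e'')}(S_\rho))$ from \cite[Proposition 2.16]{sw19} is contained in $J_{i(e')}(S_\rho)$, giving $\mathrm{int}(J_{i(e')}(S_\rho)) \neq \emptyset$. With (A) and (B) established, Lemma \ref{lem:MinAndIntImplyC} yields $\rho \in \mathcal{C}(X)$, which completes the non-attracting case and hence the density of $\mathcal{A}(X) \cup \mathcal{C}(X)$ in $\MRDS(X)$.
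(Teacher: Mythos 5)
Your proposal is correct and follows essentially the same route as the paper's proof: the dichotomy on whether $S_\tau$ admits an attracting minimal set, Lemma \ref{lem:AisDense} in the attracting case, and otherwise the support-thickening approximation $\supp \tau_e \subset \mathrm{int}(\supp \rho_e)$ combined with the type (I)/(II) analysis of Proposition \ref{prop:clsfcationOfMin} and Lemma \ref{lem:MinAndIntImplyC}. The only difference is one of detail: where the paper dispatches $\mathrm{int}(J_j(S_\rho)) \neq \emptyset$ in a single line (from $\emptyset \neq J_j(S_\tau) \subset J_j(S_\rho)$ and the thickened supports), you supply the explicit preimage-sweeping justification via non-degeneracy, continuity of roots, and the backward-invariance formula of \cite[Proposition 2.16]{sw19} --- a correct filling-in of a step the paper leaves implicit (with the minor observation, worth one sentence in the polynomial case (ii) of Definition \ref{def:condition}, that $q \neq \infty$ so the base point of your holomorphic family lies in $\mathbb{C}$).
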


\begin{proof}
Let $\tau \in \MRDS(X).$ 
If $S_\tau= (V, E,(\Gamma_e)_{e \in E})$ has an attracting minimal set, then $\tau \in \overline{ \mathcal{A}(X)}$  by Lemma \ref{lem:AisDense}. 
We assume  that $S_\tau$ has no attracting minimal sets. 
As in the proof of Lemma \ref{lem:AisDense},  we approximate $\tau_{e}$ by $\rho_{e}$ such that $\supp \tau_{e} \subset  \mathrm{int}  (\supp \rho_{e})$ for each $e \in E$ and define a new system $S_\rho.$  
Since $\emptyset \neq J_i(S_\tau) \subset J_i(S_\rho)$ for each $i \in V,$ we have   $\mathrm{int} (J_j(S_\rho)) \neq \emptyset$ for each $j \in V.$ 
Thus, by Lemma \ref{lem:MinAndIntImplyC}, it suffices to prove that $(\rs)_{i \in V}$ is a minimal set of $S_\rho.$ 

Take $z_0 \in \rs$ and $j \in V,$ and we  define $K'_i = \overline{ H_j^i(S_\rho) (\{ z_0\} )}$ for each $i \in V.$   
Since $( K'_i)_{i \in V}$ is forward $S_\rho$-invariant, there exist a minimal set $(L'_i)_{i \in V}$ of $S_\rho$ and a minimal set $(L_i)_{i \in V}$ of $S_\tau$ such that $( L_i)_{i \in V} \subset ( L'_i)_{i \in V} \subset  ( K'_i)_{i \in V}.$ 
Recall that $(L_i)_{i \in V}$  is not attracting for $S_{\tau}$ by our assumption, and hence there are two cases (I) or (II) by Proposition \ref{prop:clsfcationOfMin}. 

For the J-touching case (I), there exists $i \in V$  such that $L_i \cap J_i(S_\tau) \neq \emptyset. $
By a similar argument as the proof of Lemma \ref{lem:AisDense}, 
 there exists $e \in E$  such that $i(e) = i$ and $ J_{t(e)} (S_\rho) \cap \mathrm{int} (L'_{t(e)}) \neq \emptyset,$ and hence $H_{t(e)}^{t(e)} (S_\rho)$ does not omit three points on $ \mathrm{int} (L'_{t(e)}).$ 
It follows that     $(\rs)_{i \in V}$ is a minimal set of $S_\rho.$ 
For the sub-rotative case, there exist $i \in V$ and a rotation domain $D$ such that $ L_{i} \cap D \neq \emptyset.$  
The same idea  can be applied to show that $(\rs)_{i \in V}$ is a minimal set of $S_\rho.$ 
This completes our proof. 
\end{proof}


\section{Bifurcation of MRDSs}\label{4}
In this section, we consider  families of $\MRDS$s and their bifurcations.   

\begin{setting}\label{setting:biffam}
Let $\Lambda$ be a topological space and let $m$ be a $\sigma$-finite Borel measure on $\Lambda$, which we consider as a parameter space.
Let $I =[ a, b)$ be an  interval on the real line $\mathbb{R}$, possibly $I= [a, \infty )$. 
Suppose $X \subset \rat+$ is $\condition$ and 
suppose $\Phi \colon \Lambda \times I \to \MRDS(X)$ satisfies the following three conditions. 
Denote $ \Phi (\lambda, s) =  \tau^{\lambda, s}$. 
 
\begin{enumerate}
\item $\Phi$ is continuous and the associated directed graphs $(V, E)$ of $ \tau^{\lambda, s}$ are identical for all $(\lambda, s) \in \Lambda \times I$. 
\item $\supp \tau_{e}^{\lambda, s_{1}} \subset \mathrm{int} (\supp \tau_{e}^{\lambda, s_{2}})$ for each $e \in E,$ $\lambda \in \Lambda$ and $s_{1} < s_{2}$, where $\mathrm{int}$ denotes the set of all interior points with respect to $X$. 
\item $ \tau^{\lambda, s}$ has at least one attracting minimal set for each $(\lambda, s) \in \Lambda \times I.$ 
\end{enumerate}
\end{setting}

The essential assumption in Setting \ref{setting:biffam} is (ii), 
which describes the property that the size of noise increases as $s$ increases. 
This enables us to control minimal sets.

The most important example of such families is i.i.d.\ RDS of quadratic polynomial maps. 
The following is a motivating example, which itself is very interesting. 

\begin{example}\label{ex:quad}
Let $X=\{f_{c}(z)= z^{2} +c \in \poly ; c \in \CC\}$. 
Let $\Lambda = \mathbb{C}$ and identify $c \in \mathbb{C}$ with the quadratic polynomial $f_{c}$. 
Let $m=\Leb$ be the Lebesgue measure on $c$-plane and $I=[0, \infty).$ 
We define $ \Phi (c, s) =  \tau^{c, s}$ for $(c, s) \in \Lambda \times I$ as follows. 
Define $ \tau^{c, 0}$ as the Dirac measure at $c$ if $s=0,$ and otherwise 
define $ \tau^{c, s}$ as the Lebesgue measure on the disk $D(c, s)$ centered at $c$ with radius $s > 0$ normalized  by  $ \tau^{c, s}(D(c, s)) =1.$  
Regard it as an MRDS whose $V$ and $E$ are both singletons. 

Then  $(X,\, \Lambda,\, m,\, I,\, \Phi)$  satisfies the conditions (i), (ii) and (iii) in Setting \ref{setting:biffam}. 
\end{example}

\begin{example}
Suppose  $X \subset \Poly$ is $\condition$ and satisfies the condition (ii) in Definition \ref{def:condition} holds.  
Then the assumption (iii) in Setting  \ref{setting:biffam} is satisfied since for every $\tau \in \MRDS (X)$, $(\{\infty\})_{i \in V}$ is an attracting minimal set of $S_{\tau}$. 
\end{example}

We now show our results. 

\begin{lemma}
Let $X,\, \Lambda,\, m,\, I,\, \Phi$  as in Setting \ref{setting:biffam}. 
Then, for any $\lambda \in \Lambda,$  the number of minimal sets of $\tau^{\lambda, s}$ does not increase as $s$ increases. 
\end{lemma}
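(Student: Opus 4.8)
The plan is to fix $\lambda \in \Lambda$ and, for any $s_1 < s_2$ in $I$, compare the associated GDMSs $S_1 := S_{\tau^{\lambda, s_1}}$ and $S_2 := S_{\tau^{\lambda, s_2}}$. Writing $\Gamma_e^{(k)} := \supp \tau_e^{\lambda, s_k}$, condition (ii) of Setting \ref{setting:biffam} gives $\Gamma_e^{(1)} \subset \mathrm{int}(\Gamma_e^{(2)}) \subset \Gamma_e^{(2)}$ for every $e \in E$, so $S_2$ has at least as many generating maps along each edge as $S_1$; in fact only the inclusion $\Gamma_e^{(1)} \subset \Gamma_e^{(2)}$ will be used. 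The goal is to construct an injective map from the set of minimal sets of $S_2$ into the set of minimal sets of $S_1$. Its existence yields $\#\{\text{minimal sets of } S_2\} \le \#\{\text{minimal sets of } S_1\}$, which is exactly the assertion that the number of minimal sets does not increase as $s$ increases.

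For the construction, I would begin with an arbitrary minimal set $(L_i)_{i \in V}$ of $S_2$. Each $L_i$ is nonempty and compact, and because $\Gamma_e^{(1)} \subset \Gamma_e^{(2)}$ we have $\Gamma_e^{(1)}(L_{i(e)}) \subset \Gamma_e^{(2)}(L_{i(e)}) \subset L_{t(e)}$ for all $e \in E$; hence $(L_i)_{i \in V}$ is forward $S_1$-invariant. By Zorn's lemma, applied exactly as in the proof of Lemma \ref{lem:allAttrMinThenMS}, there is a minimal set $(K_i)_{i \in V}$ of $S_1$ with $(K_i)_{i \in V} \subset (L_i)_{i \in V}$. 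Choosing one such $(K_i)_{i \in V}$ for each minimal set of $S_2$ defines the candidate map.

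The crux is injectivity, and the key input is a disjointness property extracted from Lemma \ref{lem:OrbitIsDence}: if $(P_i)_{i \in V}$ and $(Q_i)_{i \in V}$ are minimal sets of an irreducible GDMS $S$ and $P_k \cap Q_k \ne \emptyset$ for some $k \in V$, then they coincide. Indeed, a common point $z_0 \in P_k \cap Q_k$ gives $P_i = \overline{H_k^i(S)(\{z_0\})} = Q_i$ for every $i \in V$ by Lemma \ref{lem:OrbitIsDence}, so distinct minimal sets are componentwise disjoint. Applying this to $S_2$, two distinct minimal sets $(L_i)_{i \in V} \ne (L'_i)_{i \in V}$ satisfy $L_i \cap L'_i = \emptyset$ for every $i$; then the chosen $(K_i)_{i \in V} \subset (L_i)_{i \in V}$ and $(K'_i)_{i \in V} \subset (L'_i)_{i \in V}$ are componentwise disjoint, and applying the same property to $S_1$ shows they are distinct minimal sets of $S_1$. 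Thus the candidate map is injective, which completes the argument.

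I expect the main obstacle to be the careful bookkeeping in the disjointness step rather than anything deep: one must use minimality at the level of the whole family $(L_i)_{i \in V}$ rather than componentwise, and it is irreducibility (built into Lemma \ref{lem:OrbitIsDence}) that forces a single shared point to propagate into agreement at every vertex. I would also emphasize that the stronger \emph{interior} half of condition (ii) is not needed for this lemma; only the monotonicity $\supp \tau_e^{\lambda,s_1} \subset \supp \tau_e^{\lambda,s_2}$ of the supports is used.
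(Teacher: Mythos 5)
Your proposal is correct and follows essentially the same route as the paper's proof: the paper likewise uses the support inclusion $\supp \tau_e^{\lambda,s_1} \subset \supp \tau_e^{\lambda,s_2}$ to find, inside each minimal set of $\tau^{\lambda,s_2}$, a minimal set of $\tau^{\lambda,s_1}$, and then concludes by the pairwise disjointness of minimal sets. You merely make explicit the two points the paper leaves implicit (Zorn's lemma for the existence step, and Lemma \ref{lem:OrbitIsDence} for disjointness), and your observation that the interior half of condition (ii) is not needed is consistent with the paper's argument.
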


\begin{proof}
Fix $s_{1}\leq s_{2}.$ 
For a minimal set $(L'_{i})_{i \in V}$ of $\tau^{\lambda, s_{2}}$, there exists a minimal set $(L_{i})_{i \in V}$ of $\tau^{\lambda, s_{1}}$ such that $(L_{i})_{i \in V} \subset (L'_{i})_{i \in V}$ since $\supp \tau_{e}^{\lambda, s_{1}} \subset \supp \tau_{e}^{\lambda, s_{2}}$  for each $e \in E.$ 
Since minimal sets do not intersect one another, this completes our proof. 
\end{proof}

\begin{lemma}\label{lem:bifFinite}
Let $X,\, \Lambda,\, m,\, I,\, \Phi$ as in Setting \ref{setting:biffam} and 
suppose that there exists $d \in \mathbb{N}$ with $d \geq 2$ such that  $2 \leq \deg (g) \leq d$ for each $g \in X$.
Then there exists $\alpha  \in \nn$ such that  for each $\lambda \in \Lambda,$ the  number of $s \in I$ such that $\tau^{\lambda, s}$ is not mean stable is at most $\alpha.$ 
\end{lemma}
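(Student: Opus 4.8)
The plan is to bound, uniformly in $\lambda$, both the number of minimal sets of each $\tau^{\lambda,s}$ and the number of times this number can decrease, and to locate the non-mean-stable parameters exactly at these decreases. Write $n(\lambda,s)$ for the number of minimal sets of $\tau^{\lambda,s}$ and put $\beta := 2d^{m}-2$, where $m=\#V$; by condition (i) the graph $(V,E)$, and in particular $m$, is the same for all parameters. Since $(V,E)$ is strongly connected, every vertex $j$ carries an admissible loop, hence some $h\in H_{j}^{j}(S_{\tau^{\lambda,s}})$ of length at most $m$, so $\deg h\le d^{m}$ by the degree hypothesis. Lemma \ref{lem:attrminsetisfinite} then shows that $\tau^{\lambda,s}$ has at most $\beta$ attracting minimal sets, a bound independent of $\lambda$ and $s$.

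The heart of the argument is an absorption principle, obtained by reusing the Montel argument inside the proof of Lemma \ref{lem:AisDense}. I claim that for $a\le s'<s$, no minimal set of $\tau^{\lambda,s}$ can contain a non-attracting (J-touching or sub-rotative, in the sense of Proposition \ref{prop:clsfcationOfMin}) minimal set of $\tau^{\lambda,s'}$. Indeed, suppose a minimal set $L'$ of $\tau^{\lambda,s}$ contained such a $K$. Condition (ii) gives the strict inclusion $\supp\tau^{\lambda,s'}_{e}\subset\mathrm{int}(\supp\tau^{\lambda,s}_{e})$, so the \condition\ hypothesis on $X$ lets one spread a Julia point of $K$ (or a point on the boundary of its rotation domain) into $\mathrm{int}(L'_{t(e)})$ by the open mapping theorem, exactly as in Lemma \ref{lem:AisDense}. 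Montel's theorem then forces $H_{t(e)}^{t(e)}(\tau^{\lambda,s})$ not to omit three points on $\mathrm{int}(L'_{t(e)})$, whence $L'_{t(e)}=\rs$ by forward invariance. This is impossible: by assumption (iii), $\tau^{\lambda,s}$ has an attracting minimal set $A^{*}$, which is disjoint from $L'$ if $A^{*}\ne L'$ and not J-touching if $A^{*}=L'$, either way contradicting $L'_{t(e)}=\rs$.

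From the absorption principle I would extract two consequences. First, every minimal set of $\tau^{\lambda,s}$ contains at least one minimal set of $\tau^{\lambda,s'}$ (being forward $\tau^{\lambda,s'}$-invariant, it contains one by Zorn's lemma), and the absorption forces that contained set to be attracting for $\tau^{\lambda,s'}$; since distinct minimal sets are disjoint, this assignment is injective, giving $n(\lambda,s)\le\#\{\text{attracting minimal sets of }\tau^{\lambda,s'}\}\le\beta$ for every $s>s'\ge a$. Taking $s'=a$ yields $n(\lambda,s)\le\beta$ for all $s>a$. Second, if $s_{0}\in\mathrm{Bif}^{\lambda}$ then, by Corollary \ref{cor:iff}, $\tau^{\lambda,s_{0}}$ has at least one non-attracting minimal set; applying the same injection with $s'=s_{0}$ gives, for every $s>s_{0}$, the strict drop $n(\lambda,s)\le\#\{\text{attracting minimal sets of }\tau^{\lambda,s_{0}}\}\le n(\lambda,s_{0})-1$ (the last step being legitimate since $n(\lambda,s_{0})\le\beta$ is finite when $s_{0}>a$).

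Finally I would count. The map $n(\lambda,\cdot)$ is non-increasing by the preceding lemma, and the strict drop shows that if $s_{0}<s_{0}'$ are two bifurcation parameters in $(a,b)$, then $n(\lambda,s_{0}')<n(\lambda,s_{0})\le\beta$. Hence the bifurcation parameters in $(a,b)$ carry pairwise distinct values in $\{1,\dots,\beta\}$, so there are at most $\beta$ of them; allowing for the single endpoint $s=a$, we obtain $\#\mathrm{Bif}^{\lambda}\le\beta+1=2d^{m}-1=:\alpha$, a bound depending only on $d$ and $m$. The main obstacle is the absorption principle itself: one must check carefully that the Montel/open-mapping mechanism of Lemma \ref{lem:AisDense}, originally phrased for a fixed system under a small enlargement, still yields $L'_{t(e)}=\rs$ in this two-parameter comparison, treating both the J-touching and the sub-rotative cases and using condition (ii) together with the \condition\ property of $X$ in an essential way.
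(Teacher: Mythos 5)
Your proposal is correct and takes essentially the same route as the paper's proof: both bound the number of attracting minimal sets by $2\deg(h)-2$ for a loop composition $h$ (Lemma \ref{lem:attrminsetisfinite}), reuse the Montel/open-mapping mechanism from the proof of Lemma \ref{lem:AisDense} (with conditions (ii) and (iii) of Setting \ref{setting:biffam}) to show that the finite number of minimal sets strictly drops as $s$ increases past each non-mean-stable parameter, and then count the strictly decreasing values. The only packaging difference is that you establish the comparison globally for all pairs $s'<s$ (your absorption principle, in which assumption (iii) supplies the attracting minimal set of the larger system, replacing the closeness requirement of Lemma \ref{lem:prtrbAttrMinSet}), whereas the paper argues locally in $s$ via the proof of Lemma \ref{lem:AisDense} together with its separate monotonicity lemma --- a harmless reorganization of the same argument, including your correct separate treatment of the endpoint $s=a$, where the number of minimal sets need not be finite.
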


\begin{proof}
For the directed graph $(V,E)$, fix $i \in V.$ 
Then there exists an admissible word $e$ such that $i(e) =i= t(e),$ whose length is denoted by $N.$ 
Then, by our assumption,  there exists $h \in H_{i}^{i}(S_{\tau^{\lambda, a}})$ whose degree is at most $d^{N}.$ 
Define $\alpha = 2 d^{N} -2$ and take  $\lambda \in \Lambda.$ 
It follows from the proof of Lemma \ref{lem:AisDense} that for each $s_{0} \in I$, we have that  $ \tau^{\lambda, s}$ is mean stable for $s > s_{0}$ which is sufficiently close to $s_{0}.$  
Moreover, if  $ \tau^{\lambda, s_{0}}$ is not mean stable, 
then the number of minimal sets of $\tau^{\lambda, s}$ is strictly less than that of $\tau^{\lambda, s_{0}}$.  

If $s \in (a, b)$ is sufficiently close to $a,$ then $\tau^{\lambda, s}$ is mean stable. 
For mean stable $\tau^{\lambda, s},$  each minimal set  is attracting by Lemma \ref{lem:ifMSthenminimalAreAttr}.  
By Lemma \ref{lem:attrminsetisfinite}, the number of (attracting) minimal sets for $\tau^{\lambda, s}$ is at  most $\alpha.$ 
Since the number of minimal sets strictly decreases at $s$ where  $ \tau^{\lambda, s}$ is not mean stable, 
it follows that  the  number of $s \in I$ such that $\tau^{\lambda, s}$ is not mean stable is less than $\alpha.$ 
\end{proof}

We now prove Main Result \ref{mr:biffam}, which asserts the measure-theoretic thickness of mean stable MRDSs. 

\begin{theorem}[Main Result \ref{mr:biffam}]\label{th:bif}
Let $X,\, \Lambda,\, m,\, I,\, \Phi$ as in Setting \ref{setting:biffam} . 
Denote by $\mathrm{Bif}$ the set of all $(\lambda, s) \in \Lambda \times I$ satisfying that $\tau^{\lambda, s}$ is not mean stable. 
Besides, we define the sets $\mathrm{Bif}^{\lambda}$ and $\mathrm{Bif}_s$ as follows. 
\begin{enumerate}
\item[] For each $\lambda \in \Lambda$, 
we denote by $\mathrm{Bif}^{\lambda}$ the set of all  $s\in I$ satisfying that $\tau^{\lambda, s}$ is not mean stable. 
\item[] For each $s \in I$, 
we denote by $\mathrm{Bif}_s$ the set of all $\lambda \in \Lambda$ satisfying that  $\tau^{\lambda, s}$ is not mean stable. 
\end{enumerate}
Suppose that  there exists $\alpha  \in \nn$ such that $\# \mathrm{Bif}^{\lambda} \leq \alpha$   for each $\lambda \in \Lambda.$ 
Then $m ( \mathrm{Bif}_s ) =0$ for all but countably many $s \in I.$    
\end{theorem}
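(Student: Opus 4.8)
The plan is to treat this as a Fubini-type counting argument, trading the pointwise finiteness of the vertical slices $\mathrm{Bif}^{\lambda}$ for a global mass bound that forces the exceptional set of $s$ to be countable. First I would record that $\mathrm{Bif}$ is a Borel (in fact closed) subset of $\Lambda \times I$: since $\mathcal{A}(X)$ is open in $\MRDS(X)$ by Corollary \ref{cor:AisOpen} and $\Phi$ is continuous by Setting \ref{setting:biffam}(i), the set $\mathrm{Bif} = \Phi^{-1}(\MRDS(X) \setminus \mathcal{A}(X))$ is closed, so each horizontal slice $\mathrm{Bif}_s$ is a Borel subset of $\Lambda$ and $m(\mathrm{Bif}_s)$ is well defined. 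Using $\sigma$-finiteness, I would then write $\Lambda = \bigcup_{n} \Lambda_n$ with $\Lambda_n$ Borel, increasing, and $m(\Lambda_n) < \infty$, and reduce the claim to showing that for each fixed $n$ the set $E_n := \{ s \in I : m(\mathrm{Bif}_s \cap \Lambda_n) > 0 \}$ is countable. Indeed, by continuity from below $m(\mathrm{Bif}_s \cap \Lambda_n) \uparrow m(\mathrm{Bif}_s)$, so the desired exceptional set $\{ s : m(\mathrm{Bif}_s) > 0 \}$ equals $\bigcup_n E_n$, and a countable union of countable sets is countable.

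The heart of the argument is a single mass estimate on the finite-measure piece $\Lambda_n$. For any finite collection of distinct parameters $s_1, \dots, s_k \in I$, linearity of the integral over the genuinely finite sum gives
\begin{equation*}
\sum_{j=1}^{k} m(\mathrm{Bif}_{s_j} \cap \Lambda_n) = \int_{\Lambda_n} \#\{ j : (\lambda, s_j) \in \mathrm{Bif} \} \, dm(\lambda).
\end{equation*}
Since the $s_j$ are distinct, the integrand is at most $\# \mathrm{Bif}^{\lambda}$, which is bounded by $\alpha$ by hypothesis, whence
\begin{equation*}
\sum_{j=1}^{k} m(\mathrm{Bif}_{s_j} \cap \Lambda_n) \le \alpha \, m(\Lambda_n) < \infty.
\end{equation*}
The decisive point is that this bound is uniform: it does not depend on $k$ nor on which parameters were selected.

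Finally I would convert this uniform bound into countability. If $E_n$ were uncountable, then for some $p \in \nn$ the set $\{ s : m(\mathrm{Bif}_s \cap \Lambda_n) > 1/p \}$ would be infinite; choosing $k$ distinct such parameters would force the left-hand side above to exceed $k/p$, contradicting the bound $\alpha \, m(\Lambda_n)$ once $k > p \, \alpha \, m(\Lambda_n)$. Hence each $E_n$, and therefore $\bigcup_n E_n$, is countable, which is exactly the assertion. The main obstacle here is conceptual rather than computational: integrating $\mathbf{1}_{\mathrm{Bif}}$ against a reference (say Lebesgue) measure on $I$ and invoking ordinary Fubini would only yield $m(\mathrm{Bif}_s) = 0$ for almost every $s$, leaving open an uncountable null exceptional set. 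Recovering the sharp \emph{countable} conclusion requires summing against the pure counting structure on $I$ — morally Tonelli with counting measure, but executed through finite sums to sidestep the failure of $\sigma$-finiteness of counting measure on an uncountable $I$ — so that the total mass $\sum_{s \in I} m(\mathrm{Bif}_s \cap \Lambda_n)$ is finite, and a sum of nonnegative reals over an arbitrary index set is finite only when all but countably many terms vanish.
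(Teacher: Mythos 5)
Your proposal is correct and is essentially the paper's own argument: both rest on the same double-counting identity $\sum_{j} m(\mathrm{Bif}_{s_j}) = \int \#\{ j : s_j \in \mathrm{Bif}^{\lambda}\}\, \mathrm{d}m(\lambda) \leq \alpha \cdot m(\cdot)$ over finitely many distinct parameters, followed by the observation that the sets $\{ s : m(\mathrm{Bif}_s) > 1/p \}$ are finite and their union over $p$ is countable. The only cosmetic differences are that the paper handles $\sigma$-finiteness by replacing $m$ with an equivalent probability measure while you exhaust $\Lambda$ by finite-measure pieces $\Lambda_n$, and that you additionally make explicit the measurability of $\mathrm{Bif}_s$ (via openness of $\mathcal{A}(X)$ and continuity of $\Phi$), a point the paper leaves implicit.
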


\begin{proof}
Since $(\Lambda, m)$ is $\sigma$-finite, we may assume that $m$ is a probability measure. 
We show by contradiction that $\# \{ s \in I \,; m( \mathrm{Bif}_s ) > n^{-1} \} \leq n \alpha.$  
Suppose that there exist mutually distinct elements  $s_1, \dots, s_{n \alpha +1} \in I$ such that $m( \mathrm{Bif}_{s_j} ) > n^{-1}$ for every $j =1, \dots, n \alpha +1.$ 
Then, 
\begin{align*}
\alpha + \frac{1}{n} = (n \alpha +1) \frac{1}{n} < \sum_{j=1}^{n \alpha +1} m( \mathrm{Bif}_{s_j}) 
= \int_\Lambda \#\{ s_j \in  \mathrm{Bif}^{\lambda} \, ; j  \in \{1, \dots, n \alpha +1\} \} \,  \mathrm{d} m (\lambda) \leq \alpha. 
\end{align*}
By contradiction, we have $\# \{ s \in I \,; m( \mathrm{Bif}_s ) > n^{-1} \} \leq n \alpha.$ 
We now let $I_0 = \bigcup_{n \in \nn} \{ s \in I \,; m( \mathrm{Bif}_s ) > n^{-1} \} $, which is countable. 
Then we have  $m ( \mathrm{Bif}_s ) =0$ for every $s \in I \setminus I_0.$ 
\end{proof}
 
 By Lemma \ref{lem:bifFinite} and Theorem \ref{th:bif}, we have the following corollary.

\begin{cor}[Corollary \ref{cor:Bif}]\label{cor:7}
Let $X,\, \Lambda,\, m,\, I,\, \Phi$ as in Setting \ref{setting:biffam}. 
Suppose that there exists $d \in \mathbb{N}$ with $d \geq 2$ such that  $2 \leq \deg (g) \leq d$ for each $g \in X$. 
Then  $m ( \mathrm{Bif}_s ) =0$ for all but countably many $s \in I.$   
\end{cor}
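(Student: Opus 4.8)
The plan is to obtain the corollary as a formal consequence of the two structural results already in place, Lemma~\ref{lem:bifFinite} and Theorem~\ref{th:bif}. The only new hypothesis here, relative to Setting~\ref{setting:biffam}, is the uniform degree bound $2 \leq \deg(g) \leq d$ on $X$, and its sole role is to supply a single constant $\alpha$ that caps $\#\mathrm{Bif}^{\lambda}$ \emph{simultaneously} over all $\lambda \in \Lambda$. Once such an $\alpha$ is produced, the abstract counting statement of Theorem~\ref{th:bif} applies without modification.

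First I would invoke Lemma~\ref{lem:bifFinite}. Since $(X, \Lambda, m, I, \Phi)$ satisfy Setting~\ref{setting:biffam} and every $g \in X$ has degree at most $d$, the lemma furnishes a constant $\alpha \in \nn$ such that for each fixed $\lambda \in \Lambda$ the number of $s \in I$ for which $\tau^{\lambda, s}$ fails to be mean stable is at most $\alpha$. Concretely, one fixes a vertex $i \in V$ and an admissible cycle based at $i$ of length $N$, so that $H_i^i(S_{\tau^{\lambda, a}})$ contains a composition $h$ of degree at most $d^{N}$; the choice $\alpha = 2d^{N} - 2$ then works uniformly in $\lambda$. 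In the notation of the corollary this is exactly $\#\mathrm{Bif}^{\lambda} \leq \alpha$ for every $\lambda \in \Lambda$.

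Second, with this $\alpha$ in hand the hypothesis of Theorem~\ref{th:bif} is verified. Applying that theorem directly yields $m(\mathrm{Bif}_s) = 0$ for all but countably many $s \in I$, which is precisely the assertion of the corollary.

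As for difficulties: the corollary is purely a matter of feeding one result into another, so its genuine content is deferred to the two cited statements. The step demanding the most care, were it not already established, is the degree estimate internal to Lemma~\ref{lem:bifFinite}: one must argue that the number of minimal sets of $\tau^{\lambda, s}$ drops strictly as $s$ crosses a non--mean-stable parameter (using the monotonicity in $s$ from condition~(ii) of Setting~\ref{setting:biffam} together with mean stability of $\tau^{\lambda,s}$ for $s$ just above any $s_0$), while for small $s$ this number is bounded by $2\deg(h) - 2 \leq 2d^{N} - 2$ via Lemma~\ref{lem:attrminsetisfinite}. The Fubini--counting estimate in Theorem~\ref{th:bif} that converts the per-$\lambda$ bound into a per-$s$ measure statement is then routine, so I would not expect any genuine obstacle at the level of the corollary itself.
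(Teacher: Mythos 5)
Your proposal matches the paper's own proof exactly: the paper derives Corollary \ref{cor:7} by invoking Lemma \ref{lem:bifFinite} (which uses the degree bound to produce the uniform constant $\alpha = 2d^{N}-2$ bounding $\#\mathrm{Bif}^{\lambda}$) and then applying Theorem \ref{th:bif}. Your additional remarks on where the real work lies are accurate and consistent with the paper's internal arguments.
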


By Corollary \ref{cor:7}, we have the following corollary regarding the guadratic family. 

\begin{cor}
Let $X,\, \Lambda,\, m,\, I,\, \Phi$ as in Example \ref{ex:quad}. 
Then
we have $$\Leb (\{ c \in \CC\, ; \,  \tau^{c, s} \text{ is not mean stable} \}) =0$$ for all but countably many $s \in [0, \infty )$. 
\end{cor}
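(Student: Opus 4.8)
The plan is to recognize this corollary as an immediate instance of Corollary \ref{cor:7}. By Example \ref{ex:quad}, the data $(X,\Lambda,m,I,\Phi)$ already satisfy the three conditions (i), (ii) and (iii) of Setting \ref{setting:biffam}. In particular, condition (iii) holds because every $f_{c} \in X$ is a polynomial of degree two, so $\infty$ is a superattracting fixed point and $(\{\infty\})_{i \in V}$ is an attracting minimal set of $S_{\tau^{c,s}}$ for every $(c,s)$. Thus the only further hypothesis of Corollary \ref{cor:7} that I need to verify is the uniform degree bound.

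This verification is immediate: every element of $X$ has the form $f_{c}(z) = z^{2} + c$, hence $\deg(g) = 2$ for all $g \in X$. Taking $d = 2$ therefore gives $2 \leq \deg(g) \leq d$ for each $g \in X$, as required. I note for completeness that this is the step where the quadratic family is genuinely special: the uniform bound on degrees is what feeds into Lemma \ref{lem:bifFinite} to guarantee a global cap $\alpha = 2d^{N}-2$ on the number of exceptional parameters $\mathrm{Bif}^{\lambda}$, and here one may take $N=1$ since $V$ and $E$ are singletons, giving $\alpha = 2$.

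With both hypotheses in place, I would apply Corollary \ref{cor:7} directly to conclude that $m(\mathrm{Bif}_{s}) = 0$ for all but countably many $s \in I$. It then remains only to translate the notation to the stated form: here $m = \Leb$, $I = [0,\infty)$, and by definition $\mathrm{Bif}_{s} = \{\, c \in \CC ; \tau^{c,s} \text{ is not mean stable}\,\}$, so the conclusion reads exactly $\Leb(\{\, c \in \CC ; \tau^{c,s} \text{ is not mean stable}\,\}) = 0$ for all but countably many $s \in [0,\infty)$. Since the substantive content, namely the finiteness of $\mathrm{Bif}^{\lambda}$ via Lemma \ref{lem:bifFinite} together with the Fubini-type averaging argument of Theorem \ref{th:bif}, is already packaged inside Corollary \ref{cor:7}, I do not expect any essential obstacle at this stage; the entire proof is a matter of confirming that Example \ref{ex:quad} supplies a valid instance of Setting \ref{setting:biffam} and that the degree bound holds with $d=2$.
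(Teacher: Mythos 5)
Your proposal is correct and takes essentially the same route as the paper, which states this corollary as a direct consequence of Corollary \ref{cor:7} applied to Example \ref{ex:quad}: the example is already verified to satisfy Setting \ref{setting:biffam}, and since every $g\in X$ is $z^{2}+c$ one takes $d=2$, whence $m=\Leb$, $I=[0,\infty)$ and $\mathrm{Bif}_s$ translate to the stated conclusion. Your side remark that $N=1$ (singleton graph) gives $\alpha=2d^{N}-2=2$ is accurate but not needed once Corollary \ref{cor:7} is invoked.
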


\begin{rem}\label{rem:BBR}
Br\"uck, B\"uger and Reitz in \cite{bbr} studied such i.i.d. RDSs. 
 They essentially showed that if the center $c$ satisfies $c = 0$, then the bifurcation occurs at
$s^{*} = 1/4.$ 
More precisely, $\tau^{0, s}$ has two minimal sets including $\{ \infty\}$ if $0 < s \leq s^{*}$, and 
 $\tau^{0, s}$ has the only one attracting minimal set $\{ \infty \}$ if $s > s^{*}$. 
 Hence,  $\tau^{0, s}$ is not mean stable if and only if $s = 0$ or $1/4$.
\end{rem}

\begin{rem}
Note also that $s^{*} = 1/4$ is the distance between $c = 0$ and the boundary of the celebrated Mandelbrot set. 
The Mandelbrot set $\mathcal{M}$ is the set of all parameters $c \in \CC$  such that the Julia set of $f_{c}(z) = z^{2} + c$ is connected. 
In general, it is easy to see that if  $f_{c}$ has an attracting cycle in $\CC$, then  there uniquely exists $s^{*} > 0$ such that   $\tau^{c, s}$ is not mean stable if and only if $s = 0$ or $s^{*}$. 
We need to investigate the bifurcation of our quadratic RDS thoroughly in the near future. 
\end{rem}


\end{document}